\newtheorem{theorem}{Theorem}[section]
\newtheorem{lemma}[theorem]{Lemma}
\theoremstyle{definition}
\newtheorem{definition}[theorem]{Definition}
\newtheorem{proposition}[theorem]{Proposition}
\newtheorem{example}[theorem]{Example}
\newtheorem{remark}[theorem]{Remark}
\newtheorem{corollary}[theorem]{Corollary}
\numberwithin{equation}{section}
\begin{document}

\title[Uniform dimension and associated primes of skew PBW extensions]{On the uniform dimension and the associated primes of skew PBW extensions}

%    Remove any unused author tags.

%    author one information
\author{Sebasti\'an Higuera}
\address{Universidad Nacional de Colombia - Sede Bogot\'a}
\curraddr{Campus Universitario}
\email{sdhiguerar@unal.edu.co}

\author{Mar\'ia Camila Ram\'irez}
\address{Universidad Nacional de Colombia - Sede Bogot\'a}
\curraddr{Campus Universitario}
\email{macramirezcu@unal.edu.co}

\author{Armando Reyes}
\address{Universidad Nacional de Colombia - Sede Bogot\'a}
\curraddr{Campus Universitario}
\email{mareyesv@unal.edu.co}
\thanks{}

%   author two information

\thanks{The authors were supported by the research fund of Faculty of Science, Code HERMES 53880, Universidad Nacional de Colombia - Sede Bogot\'a, Colombia}

\subjclass[2020]{16D25, 16P60, 16S36, 16S38}

\keywords{Induced module, associated prime, uniform dimension, skew PBW extension.}

\date{}

\dedicatory{Dedicated to Professor Oswaldo Lezama}

\begin{abstract}
In this paper, we study the uniform dimension and the associated prime ideals of induced modules over skew PBW extensions.
\end{abstract}

\maketitle

%\tableofcontents

\section{Introduction}\label{Introduccion}

Throughout the paper, every ring $R$ is associative (not necessarily commutative) with identity unless stated otherwise. In addition, $S$ denotes the {\em skew polynomial ring} of $R$ (also known as {\em Ore extension}) $R[x;\sigma,\delta]$ defined by Ore \cite{Ore1933} where $\sigma$ is an automorphism of $R$ and $\delta$ is a $\sigma$-derivation of $R$. In the context of the skew polynomial rings, Leroy and Matczuk \cite{LeroyMatczuk2004} considered the {\em induced modules} on these noncommutative rings: if $M_R$ is a right module and $S$ is a skew polynomial ring of $R$, then $M \otimes_R S := \widehat{M}_S $ is said to be the {\it induced module} of $M_R$ \cite[p. 2745]{LeroyMatczuk2004}. They investigated problems related to the uniform dimension and the associated primes of the induced module $\widehat{M}_S$ by considering {\em good polynomials}.

These polynomials were used by Shock with the aim of proving that the uniform dimensions of a ring $R$ and the polynomial ring $R[x]$ are equal  \cite[Theorem 2.6]{Shock1972}. A polynomial $f(x) \in R[x]$ is called {\em good} if the annihilators of the coefficients of $f(x)$ are equal \cite[p. 252]{Shock1972}. A polynomial $g \in \widehat{M}_S$ is called {\em good} if for any $r \in R$, ${\rm deg}(gr) = {\rm deg}(g)$ provided $gr \neq 0$ \cite[Definition 3.2]{LeroyMatczuk2004}. A submodule $B_S$ of $\widehat{M}_S$ is called {\it good}, if for any good polynomial $g \in B_S$ and any $n \geq {\rm deg(g)}$, there exists a good polynomial of degree $n$ in $gS$ \cite[Definition 4.4]{LeroyMatczuk2004}. Leroy and Matczuk characterized the good polynomials of $\widehat{M}_S$ and the right annihilators of generated modules by these polynomials \cite[Lemmas 3.4 and 3.7]{LeroyMatczuk2004}. In addition, they described the essential and uniform submodules of the module $\widehat{M}_S$ \cite[Theorem 4.6]{LeroyMatczuk2004}, and proved that if $\widehat{M}_S$ is a good module, then the uniform dimensions of $M_R$ and $\widehat{M}_S$ are the same \cite[Theorem 4.9]{LeroyMatczuk2004}. They showed that all associated prime ideals of the induced module $\widehat{M}_S$ arise from associated primes of the module $M_R$ \cite[Theorem 5.7]{LeroyMatczuk2004}. Continuing with the study of associated prime ideals and as a natural generalization of these ideals, Ouyang and Birkenmeier \cite{OuyangBirkenmeier2012} defined the {\em nilpotent associated primes} \cite[Definition 3.2]{OuyangBirkenmeier2012} and considered the {\em nilpotent good polynomials} as a tool to investigate these ideals \cite[Definition 3.3]{OuyangBirkenmeier2012}. They characterized the nilpotent associated primes of skew polynomial rings \cite[Theorem 3.1]{OuyangBirkenmeier2012}.

Gallego and Lezama \cite{GallegoLezama2011} defined the {\em skew PBW extensions} as a generalization of the Poincar\'e-Birkhoff-Witt extensions introduced by Bell and Goodearl \cite{BellGoodearl1988} and the skew polynomial rings of injective type. Since its introduction, ring and homological properties of skew PBW extensions have been widely studied. In the literature, several authors have shown that the skew PBW extensions generalize families of noncommutative algebras such as 3-dimensional skew polynomial algebras introduced by Bell and Smith \cite{BellSmith1990}, ambiskew polynomial rings in the sense of Jordan \cite{Jordan2000}, solvable polynomial rings by Kandri-Rody and Weispfenning  \cite{KandryWeispfenninig1990}, almost normalizing extensions defined by McConnell and Robson \cite{McConnellRobson2001}, and skew bi-quadratic algebras recently introduced by Bavula \cite{Bavula2021}. For more details about skew PBW extensions and other noncommutative algebras having PBW bases, see \cite{AbdiTalebi2024, LFGRSV, GomezTorrecillas2014, Seiler2010}. Related to the good polynomials and the associated primes, Higuera and Reyes \cite{HigueraReyes2022} extended the notion of nilpotent good polynomials and characterized the nilpotent associated prime ideals over skew PBW extensions \cite[Theorem 4.4]{HigueraReyes2022}. Annin \cite{Annin2004} considered the {\em annihilator-compliant polynomials} to investigate associated prime ideals of the induced module $\widehat{M}_S$. According to Annin, $m(x) = m_0 + \cdots + m_kx^k \in M[x]$ with $m_k\neq 0$ is called {\em annihilator-compliant} if for each $i < k$, ${\rm ann}_R(m_k) \subseteq {\rm ann}_R(m_i)$ \cite[Definition 2.23]{Annin2004}. Ni\~no et al. \cite{NinoRamirezReyes} extended this definition to study associated prime ideals of modules over skew PBW extensions. Under certain compatibility conditions, they characterized the associated primes of the induced module $M\otimes_R A:= M\langle X \rangle_A$ where $A$ is a skew PBW extension over a ring $R$ \cite[Theorem 3.12]{NinoRamirezReyes}.

Thinking about the above results and motivated for the development of the theory of the uniform dimension and the associated prime ideals of polynomial modules over noncommutative rings of polynomial type (see \cite{Annin2004}, \cite{LeroyMatczuk2004}, \cite{NinoRamirezReyes}, \cite{HigueraReyes2022}, and references therein), our aim in this paper is to characterize essential modules and the uniform dimension of induced modules over skew PBW extensions. Additionally, we study the uniform dimension of induced modules and investigate the associated prime ideals of induced modules over families of rings more general than skew polynomial rings.

The paper is organized as follows. In Section \ref{Preliminares}, we recall some definitions and preliminaries about skew PBW extensions. Section \ref{Goodpolynomials} presents the definition of good polynomial and original results that characterize these polynomials (Lemmas \ref{PBWLemma3.4}, \ref{PBWCorollary3.5},  \ref{PBWLemma3.7}, and Proposition \ref{PBWProposition3.6}). We also present several results on essential modules and uniform dimension, within which we characterize the uniform dimension of induced modules over skew PBW extensions (Lemma \ref{PBWLemma4.1}, and Theorems \ref{PBWTheorem4.6} and \ref{PBWTheorem4.9}). Section \ref{Associatedprimes} contains results related to the characterization of associated primes of induced modules over these extensions (Lemma \ref{PBWLemma5.4}, and Theorems \ref{PBWTheorem5.7} and \ref{PBWTheorem5.10}). Our results generalize those corresponding presented by Leroy and Matczuk \cite{LeroyMatczuk2004}. It is worth mentioning that this work is a sequel of the study of ideals of skew PBW extensions that has been realized by different authors (e.g. \cite{ LezamaAcostaReyes2015, NinoReyes2019, ReyesSuarez2019CMS, ReyesSuarezYesica2018}). In this way, the results formulated in this paper about associated prime ideals extend or contribute to those presented by Annin \cite{Annin2004}, Brewer and Heinzer \cite{BrewerHeinzer1974}, Faith \cite{Faith2000}, Leroy and Matczuk \cite{LeroyMatczuk2004}, Ni\~no et al., \cite{NinoRamirezReyes}, and references therein. Finally, Section \ref{examplespaper} illustrates the results established in Sections \ref{Goodpolynomials} and \ref{Associatedprimes} with several noncommutative algebras that cannot be expressed as skew polynomial rings.

Throughout the paper, $\mathbb{N}$, $\mathbb{Z}$, $\mathbb{R}$, and $\mathbb{C}$ denote the classical numerical systems. We assume the set of natural numbers including zero. The symbol $\Bbbk$ denotes a field and $\Bbbk^{*} := \Bbbk\ \backslash\ \{0\}$.

\section{Skew Poincar\'e-Birkhoff-Witt extensions}\label{Preliminares}

\begin{definition}[{\cite[Definition 1]{GallegoLezama2011}}] \label{def.skewpbwextensions}
Let $R$ be a ring. A ring $A$ is said to be a \textit{skew PBW extension over} $R$ (the ring of coefficients), denoted $A=\sigma(R)\langle
x_1,\dots,x_n\rangle$, if the following conditions hold:
\begin{enumerate}
\item[\rm (i)]$R$ is a subring of $A$ sharing the same identity element.

\item[\rm (ii)] There exist finitely many elements $x_1,\dots ,x_n\in A$ such that $A$ is a left free $R$-module, with basis the
set of standard monomials
\begin{center}
${\rm Mon}(A):= \{x^{\alpha}:=x_1^{\alpha_1}\cdots
x_n^{\alpha_n}\mid \alpha=(\alpha_1,\dots ,\alpha_n)\in
\mathbb{N}^n\}$.
\end{center}
Moreover, $x^0_1\cdots x^0_n := 1 \in {\rm Mon}(A)$.

\item[\rm (iii)]For every $1\leq i\leq n$ and any $r\in R\ \backslash\ \{0\}$, there exists $c_{i,r}\in R\ \backslash\ \{0\}$ such that $x_ir-c_{i,r}x_i\in R$.

\item[\rm (iv)]For $1\leq i,j\leq n$, there exists $d_{i,j}\in R\ \backslash\ \{0\}$ such that
\[
x_jx_i-d_{i,j}x_ix_j\in R+Rx_1+\cdots +Rx_n,
\]
i.e. there exist elements $r_0^{(i,j)}, r_1^{(i,j)}, \dotsc, r_n^{(i,j)} \in R$ with
\begin{center}
$x_jx_i - d_{i,j}x_ix_j = r_0^{(i,j)} + \sum_{k=1}^{n} r_k^{(i,j)}x_k$.    
\end{center}
\end{enumerate}
\end{definition}

Since ${\rm Mon}(A)$ is a left $R$-basis of $A$, the elements $c_{i,r}$ and $d_{i, j}$ are unique. Thus, every non-zero element $f \in A$ can be uniquely expressed as $f = \sum_{i=0}^ma_iX_i$, with $a_i \in R$, $X_0=1$, and $X_i \in \text{Mon}(A)$, for $0 \leq i \leq m$ \cite[Remark 2]{GallegoLezama2011}. 

\begin{proposition}[{\cite[Proposition 3]{GallegoLezama2011}}] \label{sigmadefinition}
If $A$ is a skew PBW extension over $R$, then there exist an injective endomorphism $\sigma_i:R\rightarrow R$ and a $\sigma_i$-derivation $\delta_i:R\rightarrow R$ such that $x_ir=\sigma_i(r)x_i+\delta_i(r)$, for each $1\leq i\leq n$, where $r\in R$.
\end{proposition}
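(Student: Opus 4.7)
The plan is to use condition (iii) of Definition \ref{def.skewpbwextensions} to define $\sigma_i$ and $\delta_i$ explicitly, and then to extract their ring-theoretic properties from the uniqueness of representations in the $R$-basis $\mathrm{Mon}(A)$.

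First I would set $\sigma_i(r):=c_{i,r}$ for $r\in R\setminus\{0\}$ and $\sigma_i(0):=0$, and define $\delta_i(r):=x_ir-\sigma_i(r)x_i$, which lies in $R$ by (iii). This gives the desired identity $x_ir=\sigma_i(r)x_i+\delta_i(r)$ by construction. Well-definedness of $\sigma_i$ and $\delta_i$ is immediate from condition (ii): since $\mathrm{Mon}(A)$ is a left $R$-basis and $x_ir$ is expressible as $\sigma_i(r)x_i+\delta_i(r)$ with no contribution from any other standard monomial, the two coefficients are uniquely determined.

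Next I would verify additivity and multiplicativity by playing two expansions of the same element against each other and invoking uniqueness of coefficients in $\mathrm{Mon}(A)$. For additivity, compare
\[
x_i(r+s)=\sigma_i(r+s)x_i+\delta_i(r+s)
\]
with
\[
x_ir+x_is=\bigl(\sigma_i(r)+\sigma_i(s)\bigr)x_i+\bigl(\delta_i(r)+\delta_i(s)\bigr);
\]
the freeness of $\{1,x_i\}$ over $R$ (on the left) forces $\sigma_i(r+s)=\sigma_i(r)+\sigma_i(s)$ and the analogous identity for $\delta_i$. For multiplicativity I would iterate the defining identity:
\[
x_i(rs)=(x_ir)s=\sigma_i(r)x_is+\delta_i(r)s=\sigma_i(r)\sigma_i(s)x_i+\bigl(\sigma_i(r)\delta_i(s)+\delta_i(r)s\bigr),
\]
and again match coefficients with $x_i(rs)=\sigma_i(rs)x_i+\delta_i(rs)$; this yields simultaneously that $\sigma_i$ is multiplicative and that $\delta_i$ satisfies the $\sigma_i$-Leibniz rule $\delta_i(rs)=\sigma_i(r)\delta_i(s)+\delta_i(r)s$. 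The normalization $\sigma_i(1)=1$ (and $\delta_i(1)=0$) follows from $x_i\cdot 1=x_i$ together with uniqueness.

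Finally, injectivity of $\sigma_i$ is essentially built into the hypothesis: condition (iii) guarantees $c_{i,r}\in R\setminus\{0\}$ whenever $r\in R\setminus\{0\}$, so $\ker\sigma_i=0$. The only subtle point, and the one I would be most careful about, is to consistently appeal to the left $R$-basis property of $\mathrm{Mon}(A)$ when equating coefficients; all other steps reduce to routine verification. No heavy computation is required and no external result is needed beyond Definition \ref{def.skewpbwextensions} itself.
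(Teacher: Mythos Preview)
Your argument is correct and is exactly the standard proof of this fact. Note, however, that the paper does not supply its own proof of this proposition: it is quoted verbatim from \cite[Proposition 3]{GallegoLezama2011} and stated without proof, so there is nothing in the present paper to compare your proposal against beyond the bare statement. Your approach---defining $\sigma_i$ and $\delta_i$ via condition (iii), then extracting additivity, multiplicativity, the $\sigma_i$-Leibniz rule, and injectivity by matching coefficients in the left $R$-basis $\mathrm{Mon}(A)$---is precisely the argument given in the original reference.
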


We use the notation $\Sigma:=\{\sigma_1,\dots,\sigma_n\}$ and $\Delta:=\{\delta_1,\dots,\delta_n\}$ for the families of injective endomorphisms and derivations of Proposition \ref{sigmadefinition}, respectively.

\begin{definition}\label{quasicommutative}
Let $A$ be a skew PBW extension over $R$.
\begin{itemize}
    \item[{\rm (i)}] \cite[Definition 4]{GallegoLezama2011} $A$ is called {\it quasi-commutative} if the conditions ${\rm (iii)}$ and ${\rm (iv)}$ presented above are replaced by the following: 
\begin{enumerate}
    \item[(iii')] For every $1 \leq i \leq n$ and $r \in R \setminus \left \{0 \right \}$, there exists $c_{i,r} \in R \setminus \left \{0 \right \}$ such that $x_ir = c_{i,r}x_i$.
\item[(iv')] For every $1 \leq i, j \leq n$, there exists $d_{i,j} \in R \setminus \left \{0 \right \}$ such that 
\begin{center}
$x_jx_i = d_{i,j}x_ix_j$.    
\end{center}
\end{enumerate}
    \item[{\rm (ii)}] \cite[Definition 4]{GallegoLezama2011} $A$ is called {\it bijective} if  $\sigma_i$ is bijective for each $1 \leq i \leq n$, and $d_{i,j}$ is invertible for any $1 \leq i <j \leq n$.
    \item [\rm (iii)] \cite[Definition 2.3]{LezamaAcostaReyes2015} If $\sigma_i$ is the identity homomorphism of $R$ for all $1 \le i\le n$, then we say that $A$ is a skew PBW extension of \textit{derivation type}. Similarly, if $\delta_i\in \Delta$ is zero, for every $1 \le i\le n$, then $A$ is called a skew PBW extension of \textit{endomorphism type}.
\end{itemize}
\end{definition}

\begin{remark}\label{comparisonendomorphism}
Some relationships between skew polynomial rings and skew PBW extensions are the following:
\begin{itemize}
    \item[\rm (i)] If $A$ is a quasi-commutative skew PBW extension, then $A$ is isomorphic to an iterated skew polynomial ring of endomorphism type \cite[Theorem 2.3]{LezamaReyes2014}.
    
    \item[\rm (ii)] In general, skew polynomial rings of injective type are strictly contained in skew PBW extensions \cite[Example 5(3)]{LezamaReyes2014}. This fact is not possible for PBW extensions. For instance, the quantum plane $\Bbbk\{ x, y \} / \langle xy - qyx\mid q\in \Bbbk^{*} \rangle$ is a skew polynomial ring of injective type given by $\Bbbk[y][x;\sigma]$, where $\sigma(y) = qy$, but cannot be expressed as a PBW extension. 
    
   \item[\rm (iii)] Skew PBW extensions of endomorphism type are more general than iterated skew polynomial rings of endomorphism type \cite[Remark 2.4 (ii)]{SuarezChaconReyes2021}. 
\end{itemize}
\end{remark}

\begin{definition}[{\cite[Section 3]{GallegoLezama2011}}]\label{definitioncoefficients}
If $A$ is a skew PBW extension over $R$, then: 
\begin{enumerate}
\item[\rm (i)] For any element $\alpha=(\alpha_1,\dots,\alpha_n)\in \mathbb{N}^n$, we will write 
$\sigma^{\alpha}:=\sigma_1^{\alpha_1}\circ \dotsb \circ \sigma_n^{\alpha_n}$, $\delta^{\alpha} = \delta_1^{\alpha_1} \circ \dotsb \circ \delta_n^{\alpha_n}$, where $\circ$ denotes composition. If
$\beta=(\beta_1,\dots,\beta_n)\in \mathbb{N}^n$, then
$\alpha+\beta:=(\alpha_1+\beta_1,\dots,\alpha_n+\beta_n)$.

\item[\rm (ii)] Let $\succeq$ be a total order defined on ${\rm Mon}(A)$. If $x^{\alpha}\succeq x^{\beta}$ but $x^{\alpha}\neq x^{\beta}$, we write $x^{\alpha}\succ x^{\beta}$. If $f$ is a non-zero element of $A$, then we use expressions as $f=a_1x^{\alpha_1} + \cdots +a_kx^{\alpha_k}$, with $a_i\in R$, and $x^{\alpha_k}\succ \dotsb \succ x^{\alpha_1}$. With this notation, we define ${\rm
lm}(f):=x^{\alpha_k}$, the \textit{leading monomial} of $f$; ${\rm
lc}(f):=a_k$, the \textit{leading coefficient} of $f$; ${\rm
lt}(f):=a_kx^{\alpha_k}$, the \textit{leading term} of $f$. Note that $\deg(f):={\rm max}\{\deg(x^{\alpha_i})\}_{i=1}^k$. If $f=0$, ${\rm lm}(0):=0$, ${\rm lc}(0):=0$, ${\rm lt}(0):=0$. 
%We also consider $X\succ 0$ for any $X\in {\rm Mon}(A)$. Thus, we extend $\succeq$ to ${\rm Mon}(A)\cup \{0\}$.
\end{enumerate}
\end{definition}

The next proposition is very useful when one need to make some computations with elements of skew PBW extensions.

\begin{proposition}[{\cite[Theorem 7]{GallegoLezama2011}}] \label{coefficientes}
If $A$ is a polynomial ring with coefficients in $R$ with respect to the set of indeterminates $\{x_1,\dots,x_n\}$, then $A$ is a skew PBW extension over $R$ if and only if the following conditions hold:
\begin{enumerate}
\item[\rm (1)]for each $x^{\alpha}\in {\rm Mon}(A)$ and every $0\neq r\in R$, there exist unique elements $r_{\alpha}:=\sigma^{\alpha}(r)\in R\ \backslash\ \{0\}$, $p_{\alpha ,r}\in A$, such that $x^{\alpha}r=r_{\alpha}x^{\alpha}+p_{\alpha, r}$, where $p_{\alpha ,r}=0$, or $\deg(p_{\alpha ,r})<|\alpha|$ if $p_{\alpha , r}\neq 0$. If $r$ is left invertible, so is $r_\alpha$.

\item[\rm (2)]For each $x^{\alpha},x^{\beta}\in {\rm Mon}(A)$, there exist unique elements $d_{\alpha,\beta}\in R$ and $p_{\alpha,\beta}\in A$ such that $x^{\alpha}x^{\beta} = d_{\alpha,\beta}x^{\alpha+\beta}+p_{\alpha,\beta}$, where $d_{\alpha,\beta}$ is left invertible, $p_{\alpha,\beta}=0$, or $\deg(p_{\alpha,\beta})<|\alpha+\beta|$ if
$p_{\alpha,\beta}\neq 0$.
\end{enumerate}
\end{proposition}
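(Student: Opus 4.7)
The plan is to prove the biconditional in two directions. In the forward direction (skew PBW extension $\Rightarrow$ (1) and (2)), I would induct on $|\alpha|$ for (1) and on $|\alpha|+|\beta|$ for (2), with Proposition \ref{sigmadefinition} providing the commutation rule $x_ir=\sigma_i(r)x_i+\delta_i(r)$ as the base tool for (1) and Definition \ref{def.skewpbwextensions}(iv) supplying the base swap $x_jx_i = d_{i,j}x_ix_j + (\text{degree at most one})$ for (2). In the reverse direction I would verify each axiom of Definition \ref{def.skewpbwextensions} by specialising (1) and (2) to the appropriate elementary multi-indices. In both directions the uniqueness clauses follow immediately from $\mathrm{Mon}(A)$ being a left $R$-basis of $A$.

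For (1), the case $|\alpha|=1$ is exactly Proposition \ref{sigmadefinition}, with $r_{e_i}=\sigma_i(r)$ and $p_{e_i,r}=\delta_i(r)\in R$. For the inductive step I would choose $i$ to be the smallest index with $\alpha_i>0$, decompose $x^\alpha = x_i\cdot x^{\alpha-e_i}$, apply the inductive hypothesis to $x^{\alpha-e_i}r$, and then use the base case to push $x_i$ past $\sigma^{\alpha-e_i}(r)$. This choice of $i$ is forced by the definition $\sigma^\alpha = \sigma_1^{\alpha_1}\circ\cdots\circ\sigma_n^{\alpha_n}$: since $\sigma_j^{\alpha_j}$ is the identity for $j<i$, one has $\sigma_i\circ\sigma^{\alpha-e_i} = \sigma^\alpha$, and the correction terms manifestly have degree strictly less than $|\alpha|$. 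Injectivity of each $\sigma_i$ (Proposition \ref{sigmadefinition}) yields $\sigma^\alpha(r)\neq 0$ whenever $r\neq 0$, and left invertibility propagates through composition.

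For (2), in the inductive step I would locate the first out-of-order juxtaposition $x_jx_i$ occurring inside $x^\alpha x^\beta$, rewrite it via Definition \ref{def.skewpbwextensions}(iv), reorder the scalars past monomials using part (1), and then invoke the inductive hypothesis on the resulting terms of strictly smaller total degree. The coefficient $d_{\alpha,\beta}$ then emerges as an iterated product of $\sigma$-twists of the elementary $d_{i,j}$'s. For the reverse direction, axioms (i) and (ii) of Definition \ref{def.skewpbwextensions} are part of the hypothesis that $A$ is a polynomial ring on $\{x_1,\dots,x_n\}$. Setting $\alpha = e_i$ in (1) yields $x_ir - r_{e_i}x_i = p_{e_i,r}\in R$ (since $\deg p_{e_i,r}<1$), giving axiom (iii) with $c_{i,r}:=r_{e_i}$. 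Setting $\alpha=e_j$, $\beta=e_i$ in (2) yields $x_jx_i - d_{e_j,e_i}x^{e_i+e_j} = p_{e_j,e_i} \in R + Rx_1 + \cdots + Rx_n$, giving axiom (iv).

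The most delicate point is bookkeeping in the forward proof of (2): one must check that none of the cascade of lower-order corrections produced by repeated applications of axiom (iv) ever contributes to the leading monomial $x^{\alpha+\beta}$, so that $d_{\alpha,\beta}$ is unambiguously the product of the relevant twisted $d_{i,j}$'s. A secondary subtlety is that the asserted left invertibility of $d_{\alpha,\beta}$ is strictly stronger than the nonvanishing granted by Definition \ref{def.skewpbwextensions}(iv); establishing it cleanly requires either an additional left-invertibility hypothesis on the base $d_{i,j}$'s (as in the bijective setting of Definition \ref{quasicommutative}(ii)) or a supplementary argument extracting invertibility from the $R$-basis structure of $\mathrm{Mon}(A)$.
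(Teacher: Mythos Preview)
The paper does not supply its own proof of this proposition: it is quoted verbatim as \cite[Theorem 7]{GallegoLezama2011} and used as a black box, so there is no in-paper argument to compare your proposal against. Your sketch---induction on $|\alpha|$ for (1) and on $|\alpha|+|\beta|$ for (2), with the reverse direction obtained by specialising to unit multi-indices---is the standard route and is essentially what one finds in the original source. Your two flagged subtleties are apt: the leading-term bookkeeping in (2) is indeed the place where care is needed, and your observation that left invertibility of $d_{\alpha,\beta}$ does not follow from Definition~\ref{def.skewpbwextensions}(iv) alone (which only asserts $d_{i,j}\neq 0$) is correct; the original proof in \cite{GallegoLezama2011} handles this via the $R$-basis structure of ${\rm Mon}(A)$ rather than by assuming bijectivity.
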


It is important to mention that the coefficients of the polynomial expressions $p_{\alpha, r}$ and $p_{\alpha, \beta}$ in Proposition \ref{coefficientes} are several evaluations of $r$ in $\sigma$'s and $\delta$'s depending on the coordinates of $\alpha$. A more explicit description of these relations can be found in \cite[Proposition 2.9 and Remark 2.10 (iv)]{Reyes2015}.

According to Definition \ref{def.skewpbwextensions}, if $A$ is a skew PBW extension over a ring $R$, then $A$ is a free left $R$-module. In this way, if $M_R$ is a right module, we can consider the set $M\langle X\rangle_A$ where the elements are of the form $m_1x^{\alpha_1} + \cdots + m_kx^{\alpha_k}, m_i \in M_R$ and $x^{\alpha_i} \in {\rm Mon}(A)$, for every $1 \le i \le k$. Considering Proposition \ref{coefficientes} and the explicit relations described in \cite[Proposition 2.9 and Remark 2.10 (iv)]{Reyes2015}, the set $M\langle X\rangle_A$ has $A$-module structure defined as follows: if $mx^{\alpha_i}:=mx_{1}^{\alpha_{i1}} \cdots x_{n}^{\alpha_{in}} \in M\langle X\rangle_A$ and $bx^{\beta_j}:=bx_{1}^{\beta_{j1}} \cdots x_{n}^{\beta_{jn}} \in A$, then we multiply these elements following the rule 
{\footnotesize \begin{equation*}\label{ecuacion2}
    \begin{split}
     \begin{aligned}
    mx_{1}^{\alpha_{i1}} \cdots x_{n}^{\alpha_{in}}bx_{1}^{\beta_{j1}} \cdots x_{n}^{\beta_{jn}} &= m\sigma^{\alpha_i}(b)x^{\alpha_i}x^{\beta_j} + mp_{\alpha_{i1}, \sigma_{i2}^{\alpha_{i2}}(\cdots(\sigma_{in}^{\alpha_{in}}(b)))}x_2^{\alpha_{i2}} \cdots x_n^{\alpha_{in}} x^{\beta_j}\\
    &\ \ \ +mx_1^{\alpha_{i1}}p_{\alpha_{i2}, \sigma_{i3}^{\alpha_{i3}}(\cdots(\sigma_{in}^{\alpha_{in}}(b)))}x_3^{\alpha_{i3}} \cdots x_n^{\alpha_{in}} x^{\beta_j}\\
   &\ \ \ + mx_1^{\alpha_{i1}}x_2^{\alpha_{i2}}p_{\alpha_{i3}, \sigma_{i4}^{\alpha_{i4}}(\cdots(\sigma_{in}^{\alpha_{in}}(b)))}x_4^{\alpha_{i4}} \cdots x_n^{\alpha_{in}} x^{\beta_j}\\
    &\ \ \ + \cdots + mx_1^{\alpha_{i1}}x_2^{\alpha_{i2}}\cdots x_{(n-2)}^{\alpha_{i(n-2)}}p_{\alpha_{i(n-1)}, \sigma_{in}^{\alpha_{in}}(b)}x_n^{\alpha_{in}}x^{\beta_j}\\
    &\ \ \ + mx_1^{\alpha_{i1}}x_2^{\alpha_{i2}}\cdots x_{(n-1)}^{\alpha_{i(n-1)}}p_{\alpha_{in}, b}x^{\beta_j}.
\end{aligned}  
    \end{split}
\end{equation*}}

This guarantees that $M\langle X\rangle_A$ is a right $A$-module and is called the {\it induced module} of $M_R$. If $\succeq$ is a total order defined on ${\rm Mon}(A)$ and $m \in M\langle X \rangle_A$ with $m \neq 0$, then we use expressions as $m=m_1x^{\alpha_1} + \cdots +m_kx^{\alpha_k}$, where $m_i\in M_R$, and $x^{\alpha_k}\succ \dotsb \succ x^{\alpha_1}$. With this notation, we define ${\rm
lm}(m):=x^{\alpha_k}$, the \textit{leading monomial} of $m$; ${\rm lc}(m):=m_k$, the \textit{leading coefficient} of $m$; ${\rm lt}(m):=m_kx^{\alpha_k}$, the \textit{leading term} of $m$. Note that $\deg(m):={\rm max}\{\deg(x^{\alpha_i})\}_{i=1}^k$. More details and properties about the induced module $M\langle X \rangle_A$ can be consulted in \cite{NinoRamirezReyes, Reyes2019}.

\section{Good polynomials and uniform dimension}\label{Goodpolynomials}
In this section, we study good polynomials, the essential modules and the uniform dimension of induced modules over skew PBW extensions.

\subsection{Good polynomials over skew PBW extensions}  

Since Leroy and Matczuk \cite{LeroyMatczuk2004} used the good polynomials to study the uniform dimension and the associated primes of induced modules over skew polynomial rings, we introduce the following definition thinking about skew PBW extensions.

\begin{definition}\label{goodpolynomial} 
Let $A$ be a skew PBW extension over $R$, $M_R$ be a right module, and $m = m_1x^{\alpha_1}+ \cdots +m_kx^{\alpha_k} \in M\langle X \rangle_A$ with leading coefficient $m_k \neq 0$. We say that  $m$ is a {\it good polynomial} if for every $r \in R$, ${\rm lm}(mr) = {\rm lm}(m)$, as long as $mr \neq 0$.
\end{definition}

Following Leroy and Matczuk, if $R$ is a ring, $\sigma$ is an automorphism of $R$, and $M_R$ is a right module, then $M_{\sigma}$ denotes the {\em $\sigma$-twisted module} defined on the same additive structure $M_{\sigma} = M$, where the action is defined by $m\cdot r := m\sigma(r)$, for all $r \in R$ \cite[p. 2747]{LeroyMatczuk2004}. With this in mind, we consider the following definition. If $\Sigma= \{\sigma_1, \ldots, \sigma_n \}$ is a finite set of automorphisms of $R$ and $\alpha =(\alpha_1, \ldots, \alpha_n)\in \mathbb{N}^n$, then $M_{\sigma^{\alpha}}$ denotes the {\em $\sigma^{\alpha}$-twisted module} where the action of $R$ over $M_{\sigma^{\alpha}}$ is given by  $m\cdot r := m\sigma^{\alpha}(r)=m\sigma_1^{\alpha_1}\circ \dotsb \circ \sigma_n^{\alpha_n}(r)$, for every $r \in R$. If $m \in M$, we denote by $\langle m \rangle_{\sigma^{\alpha}}$ the {\em $\sigma^{\alpha}$-twisted $R$-module generated by} $m$. Additionally, if $X$ is a subset of $R$ and $\alpha \in \mathbb{N}^n$, then $\sigma^{-\alpha}(X)$ denotes the {\em inverse image} of $X$ under $\sigma^{\alpha}$, that is, if $r \in \sigma^{-\alpha}(X)$, then $\sigma^{\alpha}(r) \in X$.

\begin{lemma}\label{PBWLemma3.4}
Let $A$ be a bijective skew PBW extension over $R$, $M_R$ be a right module, and $m = m_1x^{\alpha_1} + \cdots +m_kx^{\alpha_k} \in M\langle X \rangle_A$ with leading coefficient $m_k\neq 0 $. The following statements are equivalent:
\begin{itemize}
    \item [(1)]$m$ is a good polynomial.
    
    \item [(2)]For all $f \in mR_R$, ${\rm lm}(f) \succeq x^{\alpha_k}$.
    
    \item[(3)] For all $f \in mA_A$, ${\rm lm}(f) \succeq x^{\alpha_k}$.
    
    \item[(4)] For any $r \in R$, $m_k\sigma^{\alpha_k}(r) = 0$ if and only if $mr = 0$.
    
    \item[(5)] ${\rm ann}_R(m) = \sigma^{-\alpha_k}({\rm ann}_R(m_k))$.
    
    \item[(6)] ${\rm ann}_A(m) = \sigma^{-\alpha_k}({\rm ann}_R(m_k))A$.
    
    \item[(7)] $mA \cong \langle m_k\rangle_{\sigma^{\alpha_k}}A$ as $A$-modules.
%There exists an isomorphism of $A$-modules $\phi: mA \rightarrow (m_k)_{\sigma^{\alpha_k}}A$ such that $\phi(mg) := m_k \sigma^{\alpha_k}(b_0)X_0 + \cdots + m_k \sigma^{\alpha_k}(b_n)X_n$, for every element $g=b_0X_0 + \cdots + b_nX_n \in A$.
\end{itemize}
\end{lemma}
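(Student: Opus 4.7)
The proof proposes a cycle of equivalences anchored on a single leading-term computation. Applying Proposition \ref{coefficientes}(1) termwise to $m = \sum_{i=1}^{k} m_i x^{\alpha_i}$ and any $r \in R$ gives
\begin{equation*}
mr = \sum_{i=1}^{k} m_i \sigma^{\alpha_i}(r) x^{\alpha_i} + \sum_{i=1}^{k} m_i p_{\alpha_i, r}, \qquad \deg(p_{\alpha_i, r}) < |\alpha_i|.
\end{equation*}
Because the monomial order $\succeq$ refines the total degree, no tail $p_{\alpha_i, r}$ can contribute to the coefficient at $x^{\alpha_k}$, and no other $x^{\alpha_i}$ with $i<k$ equals $x^{\alpha_k}$. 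Thus the coefficient of $x^{\alpha_k}$ in $mr$ is exactly $m_k\sigma^{\alpha_k}(r)$, and ${\rm lm}(mr) \prec x^{\alpha_k}$ precisely when this element vanishes while $mr \neq 0$.

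The first stage uses only this formula. By the definition of a good polynomial, (1) $\Leftrightarrow$ (4) is immediate: goodness says $m_k\sigma^{\alpha_k}(r) = 0$ forces $mr = 0$, the reverse implication being trivial. (1) $\Leftrightarrow$ (2) is a restatement, since every element of $mR$ has monomials $\preceq x^{\alpha_k}$. And (4) $\Leftrightarrow$ (5) is the definition of $\sigma^{-\alpha_k}({\rm ann}_R(m_k))$, which makes sense precisely because $\sigma^{\alpha_k}$ is bijective; this is the first use of the bijectivity hypothesis on $A$.

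The second stage lifts to $A$. For (4) $\Rightarrow$ (6), the inclusion $\sigma^{-\alpha_k}({\rm ann}_R(m_k))A \subseteq {\rm ann}_A(m)$ reduces via (5) to the trivial $m(ra) = (mr)a$. For the converse, pick $a = \sum_\beta b_\beta x^\beta \in {\rm ann}_A(m)$ and let $\beta^*$ be maximal with $m b_{\beta^*} \neq 0$. Expanding $(m b_{\beta^*}) x^{\beta^*}$ by Proposition \ref{coefficientes}(2), its leading term is $m_k\sigma^{\alpha_k}(b_{\beta^*}) d_{\alpha_k, \beta^*} x^{\alpha_k + \beta^*}$, which is nonzero because $d_{\alpha_k, \beta^*}$ is invertible (second use of the bijectivity hypothesis), and since the order is compatible with the multi-index order no summand $(m b_\beta)x^\beta$ for $\beta \neq \beta^*$ can cancel it. This contradicts $ma = 0$, so no such $\beta^*$ exists, every $b_\beta \in {\rm ann}_R(m)$, and $a \in {\rm ann}_R(m) A$. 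Next, (6) $\Rightarrow$ (5) follows by intersecting with $R$: writing any element of $\sigma^{-\alpha_k}({\rm ann}_R(m_k))A$ in the standard left-$R$-basis of $A$ shows each coefficient lies in the right ideal $\sigma^{-\alpha_k}({\rm ann}_R(m_k))$, so $\sigma^{-\alpha_k}({\rm ann}_R(m_k))A \cap R$ is exactly that ideal. (6) $\Rightarrow$ (3) is the same leading-term argument stated positively, (3) $\Rightarrow$ (2) is trivial from $mR \subseteq mA$, and (6) $\Leftrightarrow$ (7) follows from the first isomorphism theorem $mA \cong A/{\rm ann}_A(m)$ together with the identification $\langle m_k \rangle_{\sigma^{\alpha_k}} A \cong A/\sigma^{-\alpha_k}({\rm ann}_R(m_k)) A$ of the induced module of the cyclic twisted $R$-module.

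The main obstacle is the leading-term argument in (4) $\Rightarrow$ (6): in the one-variable skew polynomial setting of Leroy and Matczuk this collapses to a plain degree argument, but in the multi-index skew PBW setting one must simultaneously track the order on ${\rm Mon}(A)$, the derivation tails $p_{\alpha, r}$ hidden in Proposition \ref{coefficientes}(1), and the structural constants $d_{\alpha, \beta}$ from Proposition \ref{coefficientes}(2). The bijective hypothesis is essential at exactly the two points flagged above; without it, either $\sigma^{-\alpha_k}$ fails to be defined as a set map or leading coefficients can vanish under right multiplication by monomials of $A$, and the whole chain of equivalences breaks.
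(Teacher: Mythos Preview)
Your proof is correct and follows essentially the same route as the paper: the same leading-term computation drives everything, your cluster of easy equivalences $(1)\Leftrightarrow(2)\Leftrightarrow(4)\Leftrightarrow(5)$ matches the paper's cycle through those nodes, and your step $(4)\Rightarrow(6)$ is exactly the paper's $(5)\Rightarrow(6)$, just with the coefficients peeled off by contradiction on a maximal $\beta^*$ rather than by top-down induction on the leading monomial of $g$. One small correction: the preimage $\sigma^{-\alpha_k}({\rm ann}_R(m_k))$ is defined as a set-theoretic inverse image and makes sense regardless of bijectivity, so bijectivity is not actually needed at $(4)\Leftrightarrow(5)$; it first enters exactly where you flag it second, through the invertibility of $d_{\alpha_k,\beta^*}$.
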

\begin{proof} 
\begin{itemize}
    \item[] $(1)\Rightarrow (2)$ If $f \in mR_R$, then $f=mr$, for some $r \in R$. If we assume that ${\rm lm}(f) \prec x^{\alpha_k}$, then ${\rm lm}(mr) \prec {\rm lm}(m)$, which contradicts that $m$ is a good polynomial. Hence $x^{\alpha_k} \preceq {\rm lm}(f)$, for all $f \in mR_R$.  
    
    \item[]$(2)\Rightarrow (3)$ Suppose that $x^{\alpha_k} \preceq {\rm lm}(f)$, for all $f \in mR_R$. It is clear that ${\rm lm}(m)\preceq {\rm lm}(mx^{\alpha})$, for every $\alpha \in \mathbb{N}^n$. Thus, if $f=mg$, for some $g \in A$, then $x^{\alpha_k}= {\rm lm}(m) \prec {\rm lm}(mg)$, whence $x^{\alpha_k} \preceq {\rm lm}(f)$, for all $f \in mA_A$.
    
    \item[]$(3)\Rightarrow (4)$ Suppose that $m_k\sigma^{\alpha_k}(r)=0$. If $mr \neq 0$, then $mr$ is a non-zero element of $mA_A$ such that ${\rm lm}(mr) \prec {\rm lm}(m)=x^{\alpha_k}$, which is a contradiction. Thus, we have $mr=0$. For the other implication, if $mr=0$ then the leading coefficient of $mr$ is zero, that is, $m_k\sigma^{\alpha_k}(r)=0$ as desired.   
    
    \item[]$(4)\Rightarrow (5)$ If $r \in {\rm ann}_R(m)$, then $mr=0$. By statement $(4)$, $m_k\sigma^{\alpha_k}(r)=0$ whence $\sigma^{\alpha_k}(r) \in {\rm ann}(m_k)$. Hence, $r \in \sigma^{-\alpha_k}({\rm ann}(m_k))$, and so ${\rm ann}_R(m) \subseteq \sigma^{-\alpha_k}({\rm ann}_R(m_k))$. For the other inclusion if $r \in \sigma^{-\alpha_k}({\rm ann}_R(m_k))$, then $\sigma^{\alpha_k}(r) \in {\rm ann}(m_k)$ which implies that $m_k\sigma^{\alpha_k}(r)=0$, and thus $mr=0$ by hypothesis. This proves that $\sigma^{-\alpha_k}({\rm ann}_R(m_k)) \subseteq {\rm ann}_R(m)$.
    
    \item[]$(5)\Rightarrow (6)$ Assume that (5) holds, and let $g= b_1x^{\beta_1} + \cdots + b_tx^{\beta_t} \in {\rm ann}_A(m)$. If $mg=0$, then $m_k\sigma^{\alpha_k}(b_t)=0$, and so $b_t \in {\rm ann}_R(m)$ by hypothesis. Thus, $m\left (b_1x^{\beta_1} + \cdots + b_{t-1}x^{\beta_{t-1}} \right ) = 0$ whence $m_k\sigma^{\alpha_k}(b_{t-1})=0$, and so $b_{t-1} \in {\rm ann}_R(m)$. Continuing this process, we have $b_i \in \sigma^{-\alpha_k}({\rm ann}_R(m_k))$, for every $1 \le i \le t$, and so $g \in \sigma^{-\alpha_k}({\rm ann}_R(m_k))A$. This proves the inclusion ${\rm ann}_A(m) \subseteq \sigma^{-\alpha_k}({\rm ann}_R(m_k))A$. The other inclusion is clear.
    
    \item[]$(6)\Rightarrow (7)$ Assume that (6) holds. If $\phi$ is the $A$-module homomorphism of $A$ over $mA$ defined by $\phi(f)=mf$ for all $f \in A$, then it is clear that the kernel of $\phi$ is ${\rm ann}_A(m)$, whence $mA \cong A / {\rm ann}_A(m)$ by the first isomorphism theorem for $A$-modules. Additionally, if ${\rm ann}_A(m) = \sigma^{-\alpha_k}({\rm ann}_R(m_k))A$ by assumption, then we get the isomorphism $mA \cong A / \sigma^{-\alpha_k}({\rm ann}_R(m_k))A$. Let $\varphi$ be the map of $R / \sigma^{-\alpha_k}({\rm ann}_R(m_k)) \times A$ over $A / \sigma^{-\alpha_k}({\rm ann}_R(m_k))A$ defined by $\varphi(\overline{r},f):=\overline{rf}$. It is not difficult to see that $\varphi$ is bilinear and due to the universal property of the tensorial product, there exists $\overline{\varphi}$ of $R / \sigma^{-\alpha_k}({\rm ann}_R(m_k)) \otimes_R A$ over $A / \sigma^{-\alpha_k}({\rm ann}_R(m_k))A$ give by $\overline{\varphi}(\overline{r}\otimes f):=\overline{rf}$ with inverse $\overline{\varphi}^{-1}$ of $A / \sigma^{-\alpha_k}({\rm ann}_R(m_k))A$ over $R / \sigma^{-\alpha_k}({\rm ann}_R(m_k)) \otimes_R A$ defined by $\overline{\varphi}^{-1}(\overline{f}):= \overline{1} \otimes f$. Thus, it follows that $A / \sigma^{-\alpha_k}({\rm ann}_R(m_k))A \cong  R / \sigma^{-\alpha_k}({\rm ann}_R(m_k)) \otimes_R A$. Since $R / \sigma^{-\alpha_k}({\rm ann}_R(m_k))$ is isomorphic to the $R$-module $\langle m_k \rangle_{\sigma^{\alpha_k}}$, we have $mA \cong \langle m_k\rangle_{\sigma^{\alpha_k}}A$, where the isomorphism $\phi$ is defined as $\phi(mg) := m_k \sigma^{\alpha_k}(b_1)x^{\beta_1} + \cdots + m_k \sigma^{\alpha_k}(b_t)x^{\beta_t}$, for every $g=b_1x^{\beta_1} + \cdots + b_tx^{\beta_t} \in A$. 
    
    \item[]$(7)\Rightarrow (1)$ Let $\phi$ be the isomorphism defined above. If $m$ is a polynomial such that ${\rm lm}(mr)\prec {\rm lm}(m)$, for some $r \in R$, then $m_k\sigma^{\alpha_k}(r)=0$. Thus, $\phi(mr)=m_k\sigma^{\alpha_k}(r)=0$, and since $\phi$ is injective, this implies that $mr=0$. Hence, $m$ is a good polynomial.
\end{itemize}
\end{proof}

%\begin{corollary}[{\cite[Lemma 3.4]{LeroyMatczuk2004}}]
%If $M_R$ is a right module and $g \in \widehat{M}_S$ is a non-zero polynomial of degree $l$ and leading coefficient $a$, then the following statements are equivalent:
%\begin{itemize}
 %   \item [(1)] $g$ is good.
  %  \item[(2)] $g$ is a polynomial of minimal degree in $gR$.
  %  \item[(3)] $g$ is a polynomial of minimal degree in $gS$.
  %  \item[(4)] For any $r \in R$, $a\sigma^{l}(r) = 0$ if and only if $gr = 0$.
  %  \item[(5)] ${\rm ann}_R(g) = \sigma^{-l}({\rm ann}_R(a))$.
  %  \item[(6)] ${\rm ann}_S(g) = \sigma^{-l}({\rm ann}_R(a))S$.
  %  \item[(7)] There exists an $S$-module isomorphism $\phi: gS \to (\widehat{aR})_{\sigma^{l}}$
%\end{itemize}
%\end{corollary}

Shock proved that given any non-zero polynomial $f(x) \in R[x]$, there exists $r \in R$ such that $f(x)r$ is good \cite[Proposition 2.2]{Shock1972}. Lemma \ref{PBWCorollary3.5} generalize Shock's result and characterizes the right annihilators of good polynomials over the induced module $M\langle X \rangle_A$. This lemma extends \cite[Corollary 3.5]{LeroyMatczuk2004}.

\begin{lemma} \label{PBWCorollary3.5}
Let $A$ be a bijective skew PBW extension over $R$ and $M_R$ be a right module. If  $m = m_1x^{\alpha_1} + \cdots + m_kx^{\alpha_k} \in M\langle X \rangle_A$ with leading coefficient $m_k \neq 0$, then:
\begin{itemize}
    \item [\rm (1)] There exists $r \in R$ such that $mr$ is good polynomial. 
    
    \item [\rm (2)] If $m$ is good polynomial, we have ${\rm ann}_A(m) = {\rm ann}_R(m)A$.
\end{itemize}
\end{lemma}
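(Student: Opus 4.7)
Both statements are consequences of the characterizations collected in Lemma \ref{PBWLemma3.4}. Part (2) is essentially a bookkeeping statement combining items (5) and (6) of that lemma, so the real content lies in part (1), which is a Shock-type descent argument adapted to the skew PBW setting. Since multiplication by $r\in R$ on the right of $m$ interacts with the $\sigma_i$'s and $\delta_i$'s through Proposition \ref{coefficientes}, I want to isolate the key effect: the leading term of $mr$ is either $m_k\sigma^{\alpha_k}(r)x^{\alpha_k}$ (when this element is non-zero) or else $\deg(mr)<\deg(m)$. This is exactly the dichotomy formalized in Lemma \ref{PBWLemma3.4}(4).

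\textbf{Proof of (1).} I would proceed by induction on $d=\deg(m)$. For the base case $d=0$ the element $m$ reduces to a constant $m_1\in M$; for any $r\in R$ one has $m_1\sigma^{\mathbf{0}}(r)=m_1r$, so $mr=0$ whenever its leading coefficient vanishes, and $m$ is already good with $r=1$. For the inductive step, assume the statement for all induced polynomials of degree less than $d$. If $m$ itself is good we are done. Otherwise Lemma \ref{PBWLemma3.4}(4) fails, so there exists $r_1\in R$ with $m_k\sigma^{\alpha_k}(r_1)=0$ and $mr_1\neq 0$. By the leading-term computation described above, $\deg(mr_1)<d$. The inductive hypothesis, applied to $mr_1\in M\langle X\rangle_A$, yields $r_2\in R$ with $(mr_1)r_2=m(r_1r_2)$ a good polynomial; taking $r:=r_1r_2$ completes the step.

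\textbf{Proof of (2).} This is an immediate combination of items (5) and (6) of Lemma \ref{PBWLemma3.4}. Indeed, for a good polynomial $m$ the first gives ${\rm ann}_R(m)=\sigma^{-\alpha_k}({\rm ann}_R(m_k))$, while the second gives ${\rm ann}_A(m)=\sigma^{-\alpha_k}({\rm ann}_R(m_k))A$; substituting the first equality into the second yields ${\rm ann}_A(m)={\rm ann}_R(m)A$. For readability I would also record the elementary inclusion ${\rm ann}_R(m)A\subseteq{\rm ann}_A(m)$ independently: if $r\in{\rm ann}_R(m)$ and $a\in A$, associativity of the $A$-action gives $m(ra)=(mr)a=0$, so $ra\in{\rm ann}_A(m)$.

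\textbf{Main obstacle.} The only delicate point is the descent argument in (1): one must be sure that multiplying by $r_1$ strictly reduces the degree, rather than just the leading monomial in some non-degree-respecting order. I would handle this by invoking Proposition \ref{coefficientes}(1), which tells us that $x^{\alpha_i}r=\sigma^{\alpha_i}(r)x^{\alpha_i}+p_{\alpha_i,r}$ with $\deg(p_{\alpha_i,r})<|\alpha_i|$; summing over the terms of $m$ shows that the coefficient of $x^{\alpha_k}$ in $mr$ is exactly $m_k\sigma^{\alpha_k}(r)$, and all other contributions lie in degree strictly below $d$. Once this is made explicit, induction on $\deg(m)$ goes through without needing any additional assumption on the monomial order $\succeq$.
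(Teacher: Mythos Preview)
Your proof of (2) is identical to the paper's. For (1) the paper also runs a descent argument based on Lemma \ref{PBWLemma3.4}, so the overall strategy matches; the only difference is that the paper chooses a counterexample of minimal \emph{leading monomial} in the order $\succeq$, whereas you induct on the total degree $d=\deg(m)$.

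That choice of induction parameter leaves a gap. Your ``Main obstacle'' paragraph claims that once $m_k\sigma^{\alpha_k}(r_1)=0$ all remaining contributions to $mr_1$ lie in degree strictly below $d$, but this holds only when $x^{\alpha_k}$ is the \emph{unique} monomial of $m$ with $|\alpha_k|=d$. In $n\ge 2$ variables with, say, a graded lexicographic order, several $x^{\alpha_i}$ with $i<k$ may share the top degree $d$, and the surviving terms $m_i\sigma^{\alpha_i}(r_1)x^{\alpha_i}$ keep $\deg(mr_1)=d$; the inductive step then stalls. The remedy is exactly what the paper does: descend on ${\rm lm}(m)$ rather than on $\deg(m)$. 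This is legitimate because $\succeq$ is a well-order on ${\rm Mon}(A)$, and the failure of goodness gives ${\rm lm}(mr_1)\prec{\rm lm}(m)$ immediately from Definition \ref{goodpolynomial}, with no degree bookkeeping needed.
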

\begin{proof}
\begin{itemize}
    \item [\rm (1)] Assume that the result is false and suppose that $m \in M\langle X \rangle_A$ is a counterexample of minimal leading monomial, that is, $mr$ is not a good polynomial, for every $r \in R$. In particular, if $r=1$, then $m$ is not a good polynomial, and thus there exists $r \in R$ such that ${\rm lm}(mr) \prec {\rm lm}(m)$. If $mr \neq 0$, then by the minimality of ${\rm lm}(m)$, there exists $c \in R$ with $mrc$ a good polynomial. However, this contradicts the fact that $mr$ is not a good polynomial for all $r \in R$. 

    \item [\rm (2)] If $m$ is good polynomial, by using equivalences $(5)$ and $(6)$ of Lemma \ref{PBWLemma3.4}, we have ${\rm ann}_A(m) = \sigma^{-\alpha_k}({\rm ann}_R(m_k))A ={\rm ann}_R(m)A$.
\end{itemize}
\end{proof}

%\begin{corollary}[\cite{LeroyMatczuk2004}, Corollary 3.5] Let $M_R$ be a right module. If $g \in \widehat{M}_S$, then: 
%\begin{itemize}
 %   \item [(1)] There exists $r \in R$ such that $gr$ is a good polynomial.
  %  \item[(2)] If $g \in \widehat{M}_S$ is a good polynomial, then ${\rm ann}_S(g) = {\rm ann}_R(g)S$.
%\end{itemize}
%\end{corollary}

A submodule $N_R$ of $M_R$ is called {\it essential} if $mR \cap N \neq 0$, for any $0 \neq m \in M_R$. The set of all elements $m \in M_R$ such that ${\rm ann}_R(m)$ is an essential ideal of $R$ is said to be the {\em singular submodule} of $M_R$ and is denoted by $Z(M_R)$; $M_R$ is called {\it nonsingular} if $Z(M_R) = 0$ \cite[Definition 3.26]{Lam1998}. Leroy and Matczuk \cite{LeroyMatczuk2004} proved that there exist good polynomials of any degree in a submodule of a nonsingular module. The following proposition extends \cite[Proposition 3.6]{LeroyMatczuk2004}.

\begin{proposition}\label{PBWProposition3.6}
Let $A$ be a bijective skew PBW extension over $R$, $M_R$ be a right module, and $m = m_1x^{\alpha_1} + \cdots + m_kx^{\alpha_k}$ be a good polynomial of $M\langle X \rangle_A$ with leading coefficient $m_k \neq 0$. If the submodule $m_kR$ of $M_R$ is nonsingular, then:
\begin{itemize}
    \item[\rm (1)] $m_kA$ contains a good polynomial $f$ with leading monomial $x_i$ for all $i$.
    \item[\rm (2)] For any $x^{\alpha_l} \succeq x^{\alpha_k} $, there exists a good polynomial  $f \in mA$ with leading monomial $x^{\alpha_l}$.
\end{itemize}
\end{proposition}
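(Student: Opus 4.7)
The plan is to produce the required good polynomials by right-multiplication, raising the leading monomial step by step, and then using Lemma \ref{PBWCorollary3.5}(1) to ``fix up'' to a good polynomial; the role of the nonsingularity hypothesis on $m_kR$ is precisely to guarantee that this fix-up does not drop the leading monomial.

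For part (1), observe first that $m_k \in m_kA$ is already a good polynomial with leading monomial $x^0 = 1$: any nonzero element of $M_R$ viewed as a degree-zero polynomial trivially satisfies Definition \ref{goodpolynomial}. To produce a good polynomial with leading monomial $x_i$, I consider $h := m_kx_i \in m_kA$, whose leading monomial is $x_i$ and whose leading coefficient is $m_k$. Since $m_kR$ is nonsingular, $\mathrm{ann}_R(m_k)$ is not essential in $R$; since $\sigma_i$ is an automorphism (by bijectivity of $A$), the right ideal $N_i := \sigma_i^{-1}(\mathrm{ann}_R(m_k))$ is likewise non-essential, because ring automorphisms preserve (non-)essentialness of right ideals. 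Choose a nonzero right ideal $K_i$ with $K_i \cap N_i = 0$. For every $u \in K_i \setminus \{0\}$ we have $m_k\sigma_i(u) \ne 0$, so $hu = m_k\sigma_i(u)x_i + m_k\delta_i(u)$ has leading monomial $x_i$. Applying Lemma \ref{PBWCorollary3.5}(1) to $hu$ produces $t \in R$ such that $(m_kx_i)(ut)$ is a good polynomial; since $K_i$ is a right ideal we have $ut \in K_i$, and since a good polynomial is nonzero $ut \ne 0$, so $ut \in K_i \setminus \{0\}$ and $\mathrm{lm}((m_kx_i)(ut)) = x_i$, as required.

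For part (2), the strategy is identical. Given $x^{\alpha_l} \succeq x^{\alpha_k}$, I take $\gamma \in \mathbb{N}^n$ with $\alpha_k + \gamma = \alpha_l$ and set $g := mx^\gamma \in mA$. Expanding using Proposition \ref{coefficientes}(2), $g = m_kd_{\alpha_k,\gamma}x^{\alpha_l} + (\text{terms of degree} < |\alpha_l|)$; bijectivity of $A$ makes $d_{\alpha_k,\gamma}$ invertible, so $c := m_kd_{\alpha_k,\gamma} \ne 0$, and thus $\mathrm{lm}(g) = x^{\alpha_l}$ with leading coefficient $c \in m_kR$. Since $cR \subseteq m_kR$ is still nonsingular, $\sigma^{-\alpha_l}(\mathrm{ann}_R(c))$ is non-essential; choosing a nonzero right ideal $K$ with $K \cap \sigma^{-\alpha_l}(\mathrm{ann}_R(c)) = 0$ and repeating the fix-up of part (1) with $g$, $c$, and $\sigma^{\alpha_l}$ in place of $h$, $m_k$, and $\sigma_i$, yields $u \in K \setminus \{0\}$ such that $gu$ is good with $\mathrm{lm}(gu) = x^{\alpha_l}$.

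The main obstacle is preservation of the leading monomial under the fix-up from Lemma \ref{PBWCorollary3.5}(1): a careless choice of multiplier could collapse the leading coefficient to zero and shrink the leading monomial. Nonsingularity of $m_kR$, combined with the fact that the automorphisms $\sigma^{\alpha_k}$ and $\sigma^{\alpha_l}$ send essential right ideals to essential right ideals (so that the annihilator-based obstruction stays non-essential after pullback), is exactly what allows us to confine the multiplier to a right ideal on which the leading coefficient cannot vanish.
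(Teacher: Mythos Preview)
Your proof is correct. Part (1) is essentially the paper's argument: the paper picks $b$ with $\sigma_i(b)R\cap{\rm ann}_R(m_k)=0$ and verifies via Lemma~\ref{PBWLemma3.4}(4) that $m_kx_ib$ is good, while you pick $u$ in a right ideal $K_i$ disjoint from $\sigma_i^{-1}({\rm ann}_R(m_k))$; in fact your $hu$ is already good by the same computation (if $m_k\sigma_i(ur)=0$ then $ur\in K_i\cap\sigma_i^{-1}({\rm ann}_R(m_k))=0$, so $h(ur)=0$), which makes your appeal to Lemma~\ref{PBWCorollary3.5}(1) redundant but harmless. For part (2) you take a genuinely more direct route. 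The paper passes through the isomorphism $mA\cong\langle m_k\rangle_{\sigma^{\alpha_k}}A$ of Lemma~\ref{PBWLemma3.4}(7), applies (1) inside the twisted module, and inches toward $x^{\alpha_l}$ one coordinate at a time; you instead jump straight to the target by setting $g=mx^{\gamma}$ and rerun the nonsingularity trick of (1) on the new leading coefficient $c=m_kd_{\alpha_k,\gamma}\in m_kR$. Your version avoids both the twisted-module machinery and the step-by-step iteration, at the cost of repeating the core argument; the paper's version has the minor structural advantage of reducing cleanly to the one-variable case. Both proofs tacitly assume that $\alpha_l-\alpha_k\in\mathbb{N}^n$ (so that $\gamma$ exists, respectively so that repeated incrementing reaches $\alpha_l$), which is not forced by $x^{\alpha_l}\succeq x^{\alpha_k}$ for an arbitrary total order; this assumption is shared with the paper's own proof.
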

\begin{proof}
\begin{itemize}
    \item[(1)] Let $g \in A$ with leading term $m_kx_i$ for some $1 \le i \le n$. Since $m_kR$ is nonsingular, there exists $0\neq b \in R$ such that $\sigma_i(b)R \bigcap {\rm ann}_R(m_k)=0$. Note that the leading coefficient of the polynomial $gb$ is $m_k\sigma_i(b)$. Now, consider an element $r \in R$ such that $\sigma_i(r) \in {\rm ann}_R(m_k\sigma_i(b))$. In this way, $\sigma_i(b)\sigma_i(r) \in \sigma_i(b)R \bigcap {\rm ann}_R(m_k)=0$, and so $\sigma_i(br)=0$. By the injectivity of $\sigma_i$, if $br=0$, then $gbr=0$. This proves that $f=gb$ is a good polynomial with leading monomial $x_i$. 
   
    \item[(2)] If $m$ is a good polynomial, then there exists an isomorphism of $A$-modules $\phi$ of $mA$ over $\langle m_k\rangle_{\sigma^{\alpha_k}}A$ such that $\phi(mg) := m_k\sigma^{\alpha_k}(b_1)x^{\beta_1} + \cdots +m_k \sigma^{\alpha_k}(b_j)x^{\beta_j} $, for every $g= b_1x^{\beta_1} + \cdots + b_jx^{\beta_j} \in A$, by Lemma \ref{PBWLemma3.4} (7). Furthermore, we get $\sigma^{-\alpha_k}({\rm ann}_R(m_k))={\rm ann}_R(m)$, by Lemma \ref{PBWLemma3.4} (5), which implies that the leading monomial of $\phi(mg)$ is $x^{\beta_j}$ if and only if the leading monomial of $mg$ is $x^{\alpha_k + \beta_j}$. Therefore, $mg \in mA$ is a good polynomial if and only if $m_k{\sigma^{\alpha_k}}(g) \in \langle m_k \rangle_{\sigma^{\alpha_k}}A$ is a good polynomial by Lemma \ref{PBWLemma3.4} (2).

    It is not difficult to see that, if $m_kR$ is nonsingular, then $\langle m_k\rangle_{\sigma^{\alpha_k}}$ also is. By part (1), there exists $g'=\phi(mg) \in \langle m_k\rangle_{\sigma^{\alpha_k}}A$ a good polynomial such that the leading monomial of $g'$ is $x_1$. Thus, $mg'$ is a good polynomial of $mA$ with leading monomial $x^{\beta}$ with $\beta \in \mathbb{N}^n$, where $\beta_1=\alpha_{k1}+1$ and $\beta_i=\alpha_{ki}$, for all $2 \le i \le n$. Since the leading coefficient of $mg'$ belongs to $\langle m_kR \rangle_{\sigma^{\alpha_k}}$, its leading coefficient satisfies the hypothesis of the theorem. Thus, there exists $mg'' \in mA$ with leading monomial $x^{\beta}$ where $\beta_1=\alpha_{k1}+2$ and $\beta_i=\alpha_{ki}$, for every $2 \le i \le n$. Following this argument, in at most $|\alpha_{l1} - \alpha_{k1}|$ steps, we find a good polynomial $mg_1$ with leading monomial $x^{\beta}$ where $\beta_1=\alpha_{l1}$ and $\beta_i=\alpha_{ki}$, for all $2 \le i \le n$. The idea is to continue with $x_2$. By part (1), there exists $g_1' \in \langle m_k\rangle_{\sigma^{\alpha_k}}A$ a good polynomial where the leading monomial of $g_1'$ is $x_2$. Thus, $mg_1'$ is a good polynomial of $mA$ with leading monomial $x^{\beta}$ with $\beta \in \mathbb{N}^n$, where $\beta_1=\alpha_{l1}$, $\beta_{2}=\alpha_{k2}+1$ and $\beta_i=\alpha_{ki}$, for all $3 \le i \le n$. Repeating the process, in at most $|\alpha_{l2} - \alpha_{k2}|$ steps, we find a good polynomial $mg_2$ with leading monomial $x^{\beta}$ where $\beta_i=\alpha_{li}$ for $i=1,2$ and $\beta_i=\alpha_{ki}$, for all $3 \le i \le n$. In this way, in at most $n\cdot {\rm max}\{|\alpha_{li} - \alpha_{ki}| \}$ steps, we have a good polynomial $f$ with ${\rm lm}(f)=x^{\alpha_l}$.
\end{itemize}
\end{proof}

Let $R$ be a ring, $\Sigma:=\{\sigma_1, \ldots, \sigma_n \}$ be a finite set of endomorphisms of $R$ and $\Delta:=\{\delta_1, \ldots, \delta_n \}$ be a finite set of $\Sigma$-derivations of $R$. If $I$ is a two-sided ideal of $R$, then $I$ is called $\Sigma$-{\em invariant} if $\sigma_i(I)\subseteq I$,  for every $1 \le i \le n$; $I$ is a $\Delta$-{\em invariant} ideal if $\delta_i(I)\subseteq I$, for all $1 \le i \le n$; if $I$ is both $\Sigma$ and $\Delta$-invariant, we say that $I$ is {\em $(\Sigma,\Delta)$-invariant} \cite[Definition 2.1]{LezamaAcostaReyes2015}. These ideals have been widely studied in the literature \cite{LezamaAcostaReyes2015, LouzariReyes2019, NinoReyes2019, Reyes2014}.

Leroy and Matczuk defined some invariant ideals associated with an ideal $I$ of $R$, an endomorphism $\sigma$ of $R$, and a $\sigma$-derivation $\delta$ of $R$ (cf. \cite[p. 2746]{LeroyMatczuk2004}). Following their ideas, we consider some ideals with the aim of studying properties of induced modules over skew PBW extensions.

\begin{definition} Let $R$ be a ring, $\Sigma:=\{\sigma_1, \ldots, \sigma_n \}$ be a finite set of endomorphisms of $R$ and $\Delta:=\{\delta_1, \ldots, \delta_n \}$ be a finite set of  $\Sigma$-derivations of $R$. If $I$ is a two-sided ideal of $R$, then
\begin{align*}
    I_{\Sigma}&:= \{a \in I \ | \ \sigma^{\alpha}(a) \in I,\ \text{for all}\ \alpha \in \mathbb{N}^n \},\\
    I_{\Delta}&:= \{a \in I \ | \ \delta^{\beta}(a) \in I,\ \text{for all}\ \beta \in \mathbb{N}^n \},\\
    I_{\Sigma,\Delta}&:= \{a \in I \ | \ \sigma^{\alpha}\delta^{\beta}(a) \in I,\ \text{for all}\ \alpha,\beta \in \mathbb{N}^n\}.
\end{align*}    
\end{definition}

\begin{remark}
Notice that $I$ is $\Sigma$-invariant if and only if $I=I_{\Sigma}$; $I$ is $\Delta$-invariant if and only if $I=I_{\Delta}$, and $I$ is a $(\Sigma, \Delta)$-invariant ideal if and only if $I=I_{\Sigma,\Delta}$.   
\end{remark}

The following lemma extends \cite[Lemma 3.1]{LeroyMatczuk2004}. 

\begin{lemma}\label{PBWLemma3.1} Let $R$ be a ring, $\Sigma:=\{\sigma_1, \ldots, \sigma_n \}$ be a finite set of endomorphisms of $R$, and $\Delta:=\{\delta_1, \ldots, \delta_n \}$ be a finite set of $\Sigma$-derivations of $R$.
\begin{itemize} 
    \item[{\rm (1)}] $ I_{\Sigma, \Delta} \subseteq I_{\Sigma} \cap I_{\Delta}$.
    
    \item[{\rm (2)}] If either $I_{\Sigma}$ is $\Delta$-invariant or $I_{\Delta}$ is $\Sigma$-invariant, then $I_{\Sigma, \Delta}=I_{\Sigma}$.
    
    \item[{\rm (3)}] If $N_R$ is a right module with $I={\rm ann}_R(N)$ an ideal $\Delta$-invariant, then:
    \begin{itemize}
        \item[{\rm (a)}] ${\rm ann}_R(N\left \langle X \right \rangle) = I_{\Sigma, \Delta}$.
        \item[{\rm (b)}] If ${\rm ann}_A(N\left \langle X \right \rangle) = JA$ for some ideal $J$, then $J = I_{\Sigma, \Delta} = {\rm ann}_R(N\left \langle X \right \rangle)$. 
    \end{itemize}
\end{itemize}
\end{lemma}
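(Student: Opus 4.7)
The plan is to dispatch (1) by unpacking the definitions, handle (2) with a short direct argument, concentrate the main technical effort on (3a), and deduce (3b) from (3a) by an intersection argument. For (1), specializing the condition $\sigma^{\alpha}\delta^{\beta}(a)\in I$ to $\beta=0$ (respectively $\alpha=0$) gives $\sigma^{\alpha}(a)\in I$ (respectively $\delta^{\beta}(a)\in I$), so $I_{\Sigma,\Delta}\subseteq I_{\Sigma}\cap I_{\Delta}$. For (2), if $I_{\Sigma}$ is $\Delta$-invariant and $a\in I_{\Sigma}$, then $\delta^{\beta}(a)\in I_{\Sigma}$ for every $\beta\in\mathbb{N}^{n}$, so $\sigma^{\alpha}\delta^{\beta}(a)\in I$ for every $\alpha$ and $\beta$, giving $a\in I_{\Sigma,\Delta}$; combining this with (1) yields $I_{\Sigma,\Delta}=I_{\Sigma}$. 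The symmetric case where $I_{\Delta}$ is $\Sigma$-invariant is handled in parallel with the roles of $\Sigma$ and $\Delta$ interchanged.

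For (3a) I would prove both inclusions using the explicit multiplication rule displayed just before Section~\ref{Goodpolynomials} (together with Proposition~\ref{coefficientes}). Given $r\in I_{\Sigma,\Delta}$, the expansion of $x^{\alpha}r$ in $A$ has coefficients that are finite $R$-linear combinations of compositions $\sigma^{\gamma}\delta^{\eta}(r)$ with $|\gamma|+|\eta|\le|\alpha|$, all of which lie in $I=\mathrm{ann}_{R}(N)$; since every element of $N\langle X\rangle$ has a unique expression $\sum_{\mu}n_{\mu}x^{\mu}$ with $n_{\mu}\in N$, this forces $nx^{\alpha}\cdot r=n(x^{\alpha}r)=0$ for every $n\in N$ and every $\alpha$, so $r$ annihilates $N\langle X\rangle$. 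Conversely, if $r\in\mathrm{ann}_{R}(N\langle X\rangle)$, the same uniqueness forces every coefficient of $x^{\alpha}r$ to lie in $I$. The leading coefficient is $\sigma^{\alpha}(r)$ by Proposition~\ref{coefficientes}(1), giving $r\in I_{\Sigma}$. To upgrade this to $\sigma^{\alpha}\delta^{\beta}(r)\in I$ for all $\alpha,\beta$, I would induct on $|\alpha|+|\beta|$, using the recursion coming from $x_{i}s=\sigma_{i}(s)x_{i}+\delta_{i}(s)$ to peel off one operator at a time; at each stage the $\Delta$-invariance of $I$ ensures that any $\delta_{i}$ applied to an element already known to lie in $I$ remains in $I$, so the target composition can be isolated from the known sum in the appropriate lower-order coefficient. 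This extraction step is the main obstacle: the combinatorial bookkeeping of which compositions appear in which coefficient, and with what multiplicities, is nontrivial in the multi-variable setting.

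Finally, (3b) follows from (3a) by a short intersection argument. Because $A$ is a free left $R$-module with $1\in\mathrm{Mon}(A)$ as a basis element, the subring $R$ is an $R$-module direct summand of $A$, so $JA\cap R=J$ for any ideal $J\subseteq R$. Consequently,
\[
J=JA\cap R=\mathrm{ann}_{A}(N\langle X\rangle)\cap R=\mathrm{ann}_{R}(N\langle X\rangle)=I_{\Sigma,\Delta},
\]
where the last equality comes from (3a).
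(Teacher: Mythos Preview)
Your handling of (1) and (2) matches the paper's proof.

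For the reverse inclusion in (3a), the paper takes a much shorter route than you propose. It does not try to extract each $\sigma^{\alpha}\delta^{\beta}(r)$ from the lower-order coefficients of $x^{\gamma}r$; instead it reads off only the \emph{leading} coefficient of $nx^{\alpha}r=0$ to get $n\sigma^{\alpha}(r)=0$ for every $\alpha$, hence $r\in I_{\Sigma}$, and then finishes in one line by invoking $I_{\Sigma}=I_{\Sigma,\Delta}$ under the $\Delta$-invariance hypothesis on $I$. Your inductive extraction is working much harder, and the obstacle you flag is genuine: the intermediate coefficients of $x^{\gamma}r$ involve arbitrary words in the $\sigma_i$ and $\delta_i$, not just the ordered compositions $\sigma^{\alpha}\delta^{\beta}$, and $\Delta$-invariance of $I$ alone does not obviously let you isolate the latter from such sums. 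The paper's leading-coefficient shortcut bypasses this combinatorics entirely, so the induction you outline is not needed.

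For (3b) your route is genuinely different and tidier. The paper repeats the leading-coefficient trick (set $\beta=0$ in $nx^{\alpha}rx^{\beta}=0$ for $r\in J$) to push $J$ into $I_{\Sigma}=I_{\Sigma,\Delta}$ directly, whereas your observation $J=JA\cap R={\rm ann}_A(N\langle X\rangle)\cap R={\rm ann}_R(N\langle X\rangle)$, valid because $R$ is a left $R$-module direct summand of the free module $A$ with $1\in{\rm Mon}(A)$, reduces (3b) to (3a) in one stroke without any further computation.
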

\begin{proof}
\begin{itemize}
\item[\rm (1)] If $a \in I_{\Sigma,\Delta}$, then $\sigma^{\alpha}\delta^{\beta}(a) \in I$ for every $\alpha, \beta \in \mathbb{N}^n$. Thus, $\sigma^{\alpha}(a) \in I$ which implies that $a \in I_{\Sigma}$. Similarly, if $\sigma^{\alpha}\delta^{\beta}(a) \in I$ for all $\alpha, \beta \in \mathbb{N}^n$, then $\delta^{\beta}(a) \in I$ for all $\beta \in \mathbb{N}^n$, which implies that $a \in I_{\Delta}$, and so $a \in I_{\Sigma}\cap I_{\Delta}$. 

\item[\rm (2)] Suppose that $I_{\Sigma}$ is a $\Delta$-invariant ideal. If $a \in I_{\Sigma}$ and $I_{\Sigma}$ is $\Delta$-invariant, then $\delta^{\beta}(a)\in I_{\Sigma}$, and hence $\sigma^{\alpha}\delta^{\beta}(a) \in I$, for all $\alpha,\beta \in \mathbb{N}^n$. Thus, $I_{\Sigma}\subseteq I_{\Sigma,\Delta}$. If $I_{\Delta}$ is $\Sigma$-invariant, then the argument is similar.

\item[\rm (3)] Let $N_R$ be a right module with $I={\rm ann}_R(N)$ an ideal $\Delta$-invariant.
    \begin{itemize}
        \item[\rm (a)] Let $n= n_1x^{\alpha_1} + \cdots + n_kx^{\alpha_k} \in N\langle X \rangle_A$. For every $r\in R$, the coefficients of $nr$ are products of $n_i$ with elements obtained evaluating $\sigma$'s and $\delta$'s in the element $r$ \cite[Remark 2.10]{Reyes2015}. If $r \in I_{\Sigma,\Delta}$, then $nr=0$, and so $I_{\Sigma,\Delta} \subseteq {\rm ann}_R(N\langle X \rangle)$. For the other inclusion, if $r \in {\rm ann}_R(N\langle X \rangle)$, then $nx^{\alpha}r = 0$, for any $\alpha \in \mathbb{N}^n$ and every $n \in N$. Notice that the leading coefficient of $nx^{\alpha}r$ is zero, that is, $n\sigma^{\alpha}(r)=0$, for all $\alpha$, which implies that $r \in I_{\Sigma}$. Since $I$ is a $\Delta$-invariant ideal, we have $r \in I_{\Sigma}=I_{\Sigma,\Delta}$ and so ${\rm ann}_R(N\langle X \rangle) \subseteq I_{\Sigma,\Delta}$. 

        \item[\rm (b)] By item (a), we obtain $I_{\Sigma,\Delta} \subseteq J$. For the other inclusion, if we have the equality ${\rm ann}_A(N \langle X \rangle) = JA$, for some ideal $J$, then $nx^{\alpha}rx^{\beta} = 0$, for every $\alpha,\beta \in \mathbb{N}^n$, $r \in J$, and $n \in N$. In particular, if $\beta = 0$, then $nx^{\alpha}r = 0$ and $n\sigma^{\alpha}(r)=0$, for any $\alpha$, which implies that $r \in I_{\Sigma}$. Since $I$ is $\Delta$-invariant, we get $r \in I_{\Sigma}=I_{\Sigma,\Delta}$ which shows that the equality $J = I_{\Sigma,\Delta} = {\rm ann}_R(N\left \langle X \right \rangle)$ follows.
\end{itemize}
\end{itemize}
\end{proof}

The following lemma characterizes annihilators of generated submodules by good polynomials and extends \cite[Lemma 3.7]{LeroyMatczuk2004}.

\begin{lemma}\label{PBWLemma3.7} Let $A$ be a bijective skew PBW extension over $R$, $M_R$ be a right module, and $m= m_1x^{\alpha_1} + \cdots + m_kx^{\alpha_k} \in M \langle X \rangle_A$ be a good polynomial with leading coefficient $m_k \neq 0$. If $I:=\sigma^{-\alpha_k}({\rm ann}_R(m_kR))$, then:
\begin{itemize}
    \item[\rm (1)]${\rm ann}_R(mA) \subseteq {\rm ann}_A(mA) \subseteq {\rm ann}_A(mR)$.
    
    \item[\rm (2)] ${\rm ann}_R(mR) = I$ and ${\rm ann}_A(mR) = IA$.
    
    \item[\rm (3)]${\rm ann}_R(mA) = I_{\Sigma,\Delta}$.
    
   \item[\rm (4)]If $I_{\Sigma}$ is $\Delta$-invariant, then ${\rm ann}_A(mA) = I_{\Sigma}A$.
\end{itemize}
\end{lemma}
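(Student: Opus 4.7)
The plan is to treat the four parts in order: (1) is essentially formal, (2) does the main work of relating annihilators of $mR$ to $I$, and (3) and (4) build on (2) by incorporating the full $A$-module structure. Part (1) is immediate: any $r\in R$ with $(mA)r=0$ annihilates $mA$ as an element of $A$, and any $g\in A$ with $(mA)g=0$ also annihilates the submodule $mR\subseteq mA$. For (2), to establish ${\rm ann}_R(mR)=I$, I rewrite $r\in{\rm ann}_R(mR)$ as the condition $m(sr)=0$ for every $s\in R$, i.e., $sr\in{\rm ann}_R(m)=\sigma^{-\alpha_k}({\rm ann}_R(m_k))$ by Lemma \ref{PBWLemma3.4}(5); since $\sigma^{\alpha_k}$ is bijective, the condition $\sigma^{\alpha_k}(s)\sigma^{\alpha_k}(r)\in{\rm ann}_R(m_k)$ for all $s$ collapses to $\sigma^{\alpha_k}(r)\in{\rm ann}_R(m_kR)$, i.e., $r\in I$. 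For ${\rm ann}_A(mR)=IA$, the inclusion $IA\subseteq{\rm ann}_A(mR)$ follows from associativity together with $I\subseteq{\rm ann}_R(m)$. For the reverse, writing $g=\sum_{j=0}^{t}b_jx^{\beta_j}$ with leading coefficient $b_t\neq 0$, the leading coefficient of $msg$ equals $m_k\sigma^{\alpha_k}(s)\sigma^{\alpha_k}(b_t)d_{\alpha_k,\beta_t}$; requiring it to vanish for every $s$ and using bijectivity of $\sigma^{\alpha_k}$ gives $\sigma^{\alpha_k}(b_t)d_{\alpha_k,\beta_t}\in{\rm ann}_R(m_kR)$, and invertibility of $d_{\alpha_k,\beta_t}$ yields $b_t\in I$. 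Subtracting $b_tx^{\beta_t}\in IA\subseteq{\rm ann}_A(mR)$ strictly lowers the leading monomial of $g$, and induction completes the step.

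For (3), the inclusion $I_{\Sigma,\Delta}\subseteq{\rm ann}_R(mA)$ rests on the explicit multiplication rule of a skew PBW extension: expanding $ar$ via $x^\beta r=\sigma^\beta(r)x^\beta+p_{\beta,r}$, the formulas of Proposition \ref{coefficientes} together with the description in \cite{Reyes2015} show that every coefficient of $ar$ is an $R$-linear combination of expressions $\sigma^\gamma\delta^{\gamma'}(r)$, all of which lie in $I$ when $r\in I_{\Sigma,\Delta}$. Hence $ar\in IA$ and $m(ar)=0$ because $I\subseteq{\rm ann}_R(m)$. For the reverse, let $r\in{\rm ann}_R(mA)$; part (2) gives $r\in I$, and applying the leading-term analysis of (2) to $msx^\alpha r=0$ extracts $\sigma^\alpha(r)\in I$ for every $\alpha\in\mathbb{N}^n$. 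Subtracting this leading part leaves $msp_{\alpha,r}=0$ for all $s$, which forces $p_{\alpha,r}\in{\rm ann}_A(mR)=IA$, so every coefficient of $p_{\alpha,r}$ lies in $I$. Letting $\alpha$ vary over $\mathbb{N}^n$ and parsing these coefficients via the explicit commutation formulas recovers $\sigma^\gamma\delta^{\gamma'}(r)\in I$ for all $\gamma,\gamma'$, giving $r\in I_{\Sigma,\Delta}$.

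For (4), the $\Delta$-invariance of $I_\Sigma$ combined with Lemma \ref{PBWLemma3.1}(2) gives $I_{\Sigma,\Delta}=I_\Sigma$, so (3) yields ${\rm ann}_R(mA)=I_\Sigma$. Then $I_\Sigma A\subseteq{\rm ann}_A(mA)$ follows from part (1) and the fact that ${\rm ann}_A(mA)$ is a right ideal of $A$. For the reverse, take $g\in{\rm ann}_A(mA)$; by parts (1) and (2), $g\in IA$, so $g=\sum b_jx^{\beta_j}$ with $b_j\in I$. Because ${\rm ann}_A(mA)$ is two-sided, $x^\alpha g\in{\rm ann}_A(mA)\subseteq IA$ for every $\alpha$; its leading coefficient $\sigma^\alpha(b_t)d_{\alpha,\beta_t}$ must lie in $I$, and invertibility of $d_{\alpha,\beta_t}$ then forces $\sigma^\alpha(b_t)\in I$ for every $\alpha$, i.e., $b_t\in I_\Sigma$. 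Subtracting $b_tx^{\beta_t}\in I_\Sigma A\subseteq{\rm ann}_A(mA)$ lowers the leading monomial, and induction delivers $g\in I_\Sigma A$. The main obstacle throughout is the $(\subseteq)$ direction of (3): one must verify carefully that, as $\alpha$ varies, the coefficients of $p_{\alpha,r}$ exhaust every mixed composition $\sigma^\gamma\delta^{\gamma'}(r)$ needed for $(\Sigma,\Delta)$-invariance, a combinatorial bookkeeping resting on the commutation formulas of \cite{Reyes2015} and noticeably more delicate than the single-variable Ore case of \cite[Lemma 3.7]{LeroyMatczuk2004}.
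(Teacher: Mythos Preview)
Your treatment of parts (1), (2), and (4) is correct and follows the paper's line almost exactly; the only cosmetic difference in (4) is that you argue via the two-sidedness of ${\rm ann}_A(mA)$ and look at $x^{\alpha}g$, while the paper instead uses $mRx^{\beta}f=0$ directly to extract $\sigma^{\beta}(b_t)\in I$. Both work.

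Part (3) is where you and the paper diverge. You attack ${\rm ann}_R(mA)=I_{\Sigma,\Delta}$ head-on by expanding $m s x^{\alpha} r$ and tracking which compositions $\sigma^{\gamma}\delta^{\gamma'}(r)$ appear in the coefficients, and you rightly flag this as the delicate step. The paper bypasses this entirely: since $m$ is good, Lemma~\ref{PBWLemma3.4} gives ${\rm ann}_A(m)={\rm ann}_R(m)A$, and hence there is an $A$-module isomorphism
\[
mA\;\cong\;A/{\rm ann}_R(m)A\;\cong\;(R/{\rm ann}_R(m))\otimes_R A\;\cong\;(mR)\langle X\rangle.
\]
With this identification, ${\rm ann}_R(mA)={\rm ann}_R\bigl((mR)\langle X\rangle\bigr)$, and the right-hand side is computed by Lemma~\ref{PBWLemma3.1}(3)(a) applied to $N=mR$, yielding $I_{\Sigma,\Delta}$ in one stroke. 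This buys you precisely the combinatorial bookkeeping you were worried about: all the coefficient analysis for a generic induced module $N\langle X\rangle$ has already been packaged into Lemma~\ref{PBWLemma3.1}(3), so (3) becomes a two-line corollary rather than a fresh induction. Your direct approach is not wrong, but recognizing $mA$ as itself an induced module is the organizing observation that the paper exploits and that your write-up misses.

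One small slip in your (2): the inclusion $IA\subseteq{\rm ann}_A(mR)$ does not follow from $I\subseteq{\rm ann}_R(m)$ alone; you need $I\subseteq{\rm ann}_R(mR)$ (equivalently the equality $I={\rm ann}_R(mR)$ you just established), since you must kill $msr'$ for \emph{every} $s\in R$, not just $s=1$.
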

\begin{proof}
\begin{itemize}
    \item[\rm (1)] Since $R\subseteq A$, it follows that ${\rm ann}_R(mA) \subseteq {\rm ann}_A(mA)$. On the other hand if $f \in {\rm ann}_A(mA)$, then $gAf=0$ whence $gRf=0$. This proves that ${\rm ann}_A(mA)\subseteq {\rm ann}_A(mR)$.
    
    \item[\rm (2)] Let us show that ${\rm ann}_R(mR)=I$. If $r \in I$, then $m_kR\sigma^{\alpha_k}(r)=0$, and since $m$ is a good polynomial, we have $mRr=0$ and thus $r \in {\rm ann}_R(mR)$. Now, if $r \in {\rm ann}_R(mR)$, then $m_kR\sigma^{\alpha_k}(r)=0$ and so $r \in \sigma^{-\alpha_k}({\rm ann}_R(m_kR))=I$. Let us prove that ${\rm ann}_A(mR)=IA$. If $f= a_1x^{\beta_1} + \cdots + a_tx^{\beta_t} \in {\rm ann}_A(mR)$, then $mRf=0$ whence $m_kR\sigma^{\alpha_k}(a_t)=0$. Since $m$ is good, it follows that $a_t \in {\rm ann}_R(mR)=I$. If $mRa_t=0$, then $mR(a_1x^{\beta_1} + \cdots + a_{t-1}x^{\beta_{t-1}}) = 0$ which implies that $m_kR\sigma^{\alpha_k}(a_{t-1})=0$. If $m$ is good, then $mRa_{t-1}=0$, and thus $a_{t-1} \in {\rm ann}_R(mR)=I$. Continuing this argument, we have that $a_i \in {\rm ann}_R(mR)=I$, for every $1 \leq i \leq t$, and so $f \in {\rm ann}_R(mR)A=IA$. Hence, ${\rm ann}_A(mR) \subseteq IA$. The reverse inclusion is clear.
    
    \item[\rm (3)] We have the $R$-module isomorphism $(mR)\langle X \rangle = mR \otimes_R A\cong mA$. By item (2), we obtain that ${\rm ann}_R(mR)=I$ and by Lemma \ref{PBWLemma3.1} (3)(a), it follows that ${\rm ann}_R((mR)\langle X \rangle)={\rm ann}_R(mA)=I_{\Sigma, \Delta}$.
    
    \item[\rm (4)] Suppose that $I_{\Sigma}$ is $\Delta$-invariant. By Lemma \ref{PBWLemma3.1} (2), we get $I_{\Sigma} = I_{\Sigma,\Delta}$. By items $(1)$ and $(3)$, we obtain $I_{\Sigma}A = {\rm ann}_R(mA)A \subseteq {\rm ann}_A(mA)$. To see the other inclusion if $f = a_1x^{\beta_1} + \cdots + a_tx^{\beta_t} \in  {\rm ann}_A(mA)$, then $mRx^{\beta}f = 0$ whence $\sigma^{\beta}(a_t) \in \sigma^{-\alpha_k}({\rm ann}_R(m_kR)) = I$, for every $\beta \in \mathbb{N}^n$, and so $a_t \in I_{\Sigma}$. Since $m$ is good, we get $mRx^{\beta}a_t = 0$ for all $\beta \in \mathbb{N}^n$ whence $mAa_t = 0$. Then $mA(f - a_tx^{\beta_t}) = 0$ and ${\rm lm}(f - a_tx^{\beta_t}) \prec {\rm lm}(f)$.  An inductive argument proves that ${\rm ann}_A(mA) \subseteq I_{\Sigma}A$ and so the equality ${\rm ann}_A(mA) = I_{\Sigma}A$ holds.
\end{itemize}
\end{proof}

\subsection{Uniform dimension over skew PBW extensions} 

In this section, we study the essential modules and the uniform dimension of induced modules over skew PBW extensions. The following theorem extends \cite[Lemma 4.1]{LeroyMatczuk2004}.

\begin{lemma}\label{PBWLemma4.1}
Let $A$ be a bijective skew PBW extension over $R$ and $M_R$ be a right module. If $N_R$ is a submodule $M_R$, then:
\begin{itemize}
    \item[\rm (1)] $N\langle X \rangle_R$ is essential in  $M\langle X \rangle_R$ if and only if $N\langle X \rangle_A$ is essential in $M\langle X \rangle_A$. \item[(2)]${Z}(M_R)=0$ if and only if ${Z}(M\langle X \rangle_R)=0$.
    
    \item[\rm (3)] If ${Z}(M_R)=0$, then ${Z}(M \langle X \rangle_A)=0$.
\end{itemize}
\end{lemma}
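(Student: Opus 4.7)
The reverse direction $Z(M\langle X\rangle_R)=0\Rightarrow Z(M_R)=0$ is immediate, since $M$ embeds in $M\langle X\rangle$ as constant polynomials and the annihilator of any $m\in M$ in $R$ is unchanged by this embedding. For the forward direction, I argue by contradiction: suppose $0\neq f=m_1x^{\alpha_1}+\cdots+m_kx^{\alpha_k}\in Z(M\langle X\rangle_R)$ with $m_k\neq 0$. For any nonzero right ideal $J\leq R$, bijectivity of $\sigma^{\alpha_k}$ makes $\sigma^{-\alpha_k}(J)$ a nonzero right ideal, so essentiality of ${\rm ann}_R(f)$ yields $0\neq s\in\sigma^{-\alpha_k}(J)\cap{\rm ann}_R(f)$. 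Reading the coefficient of $x^{\alpha_k}$ in $fs=0$ (using Proposition \ref{coefficientes}, which guarantees lower-degree terms $p_{\alpha_i,s}$ cannot reach the leading monomial) gives $m_k\sigma^{\alpha_k}(s)=0$, so $\sigma^{\alpha_k}(s)\in J\cap{\rm ann}_R(m_k)$ is nonzero. Thus ${\rm ann}_R(m_k)$ is essential, forcing $m_k\in Z(M_R)=0$, a contradiction.

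\textbf{Part (3).} Let $0\neq f\in Z(M\langle X\rangle_A)$. Since the singular submodule is closed under right $A$-action, Lemma \ref{PBWCorollary3.5}(1) produces $r\in R$ with $fr$ a nonzero good polynomial still lying in $Z(M\langle X\rangle_A)$. Lemma \ref{PBWLemma3.4}(6) gives ${\rm ann}_A(fr)=I_0A$ where $I_0=\sigma^{-\alpha_k}({\rm ann}_R((fr)_k))$. For any right ideal $J\leq R$, elements of $JA$ have all coefficients in $J$, so $(JA)\cap(I_0A)=(J\cap I_0)A$; hence essentiality of $I_0A$ in $A_A$ transfers to essentiality of $I_0$ in $R_R$, and then (by bijectivity of $\sigma^{\alpha_k}$) to essentiality of ${\rm ann}_R((fr)_k)$ in $R_R$. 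This forces $(fr)_k\in Z(M_R)=0$, a contradiction.

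\textbf{Part (1).} The forward direction is immediate from $mR\subseteq mA$. For the reverse, given $0\neq m\in M\langle X\rangle$, I want to exhibit $r\in R$ with $0\neq mr\in N\langle X\rangle$. The plan is to work with the right ideals $\mathcal{I}=\{r\in R\mid mr\in N\langle X\rangle\}\supseteq{\rm ann}_R(m)$ in $R$ and $\mathcal{J}=\{g\in A\mid mg\in N\langle X\rangle\}\supseteq{\rm ann}_A(m)$ in $A$. Essentiality of $N\langle X\rangle_A$ gives $\mathcal{J}\supsetneq{\rm ann}_A(m)$, and the target is $\mathcal{I}\supsetneq{\rm ann}_R(m)$. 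Since $N\langle X\rangle$ is an $A$-submodule one has $\mathcal{I}A\subseteq\mathcal{J}$; if the reverse inclusion $\mathcal{J}\subseteq\mathcal{I}A$ also holds, then $\mathcal{I}={\rm ann}_R(m)$ would force $\mathcal{J}\subseteq\mathcal{I}A={\rm ann}_R(m)A\subseteq{\rm ann}_A(m)$, contradicting $\mathcal{J}\supsetneq{\rm ann}_A(m)$.

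\emph{The main obstacle} is establishing $\mathcal{J}\subseteq\mathcal{I}A$, i.e.\ showing that if $mg\in N\langle X\rangle$ for $g=\sum_i b_ix^{\beta_i}$, then each $mb_i\in N\langle X\rangle$. Using the associativity decomposition $mg=\sum_i(mb_i)x^{\beta_i}$, invertibility of the structure constants $d_{\gamma,\beta}$ (Proposition \ref{coefficientes}, via bijectivity of $A$), and Lemma \ref{PBWCorollary3.5}(1) to reduce awkward elements of $mA\cap N\langle X\rangle$ to good polynomials with clean leading-term behaviour, I expect to proceed by induction on the number of nonzero terms of $g$. The single-term base case $g=bx^\beta$ reduces to showing that $(mb)x^\beta\in N\langle X\rangle$ forces $mb\in N\langle X\rangle$, which follows by downward induction on the monomial of the coefficient of $mb$ one wishes to extract, peeling off the leading contribution (whose coefficient is $(mb)_{\gamma_{\max}}d_{\gamma_{\max},\beta}\in N$) and then iterating after subtracting the controlled lower-degree contributions from the $p_{\gamma,\beta}$. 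The multi-term step handles possible collisions of leading monomials among the $(mb_i)x^{\beta_i}$ by extracting good polynomials and bookkeeping the skew structure carefully.
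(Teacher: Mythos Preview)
Your arguments for parts (2) and (3) are correct and close to the paper's: where the paper picks an element of minimal leading monomial to obtain a good polynomial, you invoke Lemma~\ref{PBWCorollary3.5}(1) directly, which achieves the same thing. The transfer of essentiality from $I_0A$ to $I_0$ via $(JA)\cap(I_0A)=(J\cap I_0)A$ in part~(3) is a clean way to finish.

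Part (1), however, has a real gap. Your central claim $\mathcal{J}\subseteq\mathcal{I}A$ is, after passing to the quotient $(M/N)\langle X\rangle$, exactly the assertion that ${\rm ann}_A(\bar m)\subseteq{\rm ann}_R(\bar m)\,A$ for an \emph{arbitrary} element $\bar m$ of an induced module. This fails without a goodness hypothesis. Already in the classical case $A=R[x]$ with $R=M_2(k)$, take $\bar m=E_{12}+E_{11}x$: then ${\rm ann}_R(\bar m)=0$, yet $g=E_{12}-E_{22}x$ satisfies $\bar m\,g=0$. In your language (with $N=0$) you are asserting the Armendariz-type implication ``$mg=0\Rightarrow mb_i=0$ for every coefficient $b_i$ of $g$'', which is well known to fail over such rings. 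Your single-term base case is fine, but the ``multi-term step'' cannot be completed by bookkeeping; the collisions you allude to are precisely where the counterexample lives.

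The missing idea, which the paper supplies, is to pass to the quotient $(M/N)\langle X\rangle$ and apply Lemma~\ref{PBWCorollary3.5}(1) \emph{there} first, replacing $\bar m$ by a good polynomial $\bar m\,r$. Only then does Lemma~\ref{PBWCorollary3.5}(2) give ${\rm ann}_A(\bar m\,r)={\rm ann}_R(\bar m\,r)\,A$. Essentiality of $N\langle X\rangle_A$ produces $g=\sum b_ix^{\beta_i}$ with $0\neq mrg\in N\langle X\rangle$; since $\bar m\,r\,g=0$ and $\bar m\,r$ is good, each $b_i\in{\rm ann}_R(\bar m\,r)$, i.e.\ each $mrb_i\in N\langle X\rangle$, and because $mrg=\sum (mrb_i)x^{\beta_i}\neq 0$ some $mrb_i$ is a nonzero element of $mR\cap N\langle X\rangle$. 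Your $\mathcal{I},\mathcal{J}$ framework can be rescued verbatim once you first replace $m$ by $mr$ so that its image in the quotient is good.
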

\begin{proof} 
\begin{itemize}
    \item[\rm (1)] If $N\langle X \rangle_R$ is essential, then $0 \neq mR \cap N\langle X \rangle_R \subseteq mA \cap N\langle X \rangle_A$ for every $m\in M\langle X \rangle_A$, and so $N\langle X \rangle_A$ is an essential submodule of $M\langle X \rangle_A$. Suppose that $N\langle X \rangle_A$ is an essential submodule of $M\langle X \rangle_A$ and consider the $A$-module homomorphism of $M\langle X \rangle_A$ over $(M/N)\langle X \rangle_A$ defined by $\phi(m_1x^{\alpha_1} + \cdots + m_kx^{\alpha_k}):= \overline{m_1}x^{\alpha_1} + \cdots + \overline{m_k}x^{\alpha_k}$. It is not hard to see that $\phi$ induces an isomorphism $\overline{\phi}$ between the $A$-modules $M\langle X \rangle_A / N\langle X \rangle_A$ and $(M/N)\langle X \rangle_A$. Let $\overline{f}\in (M/N)\langle X \rangle_A$ be the image of $f$ by $\overline{\phi}$, for some $f \in M\langle X \rangle_A \setminus N\langle X \rangle_A$. By Lemma \ref{PBWCorollary3.5} (1), there exists $r \in R$ such that $\overline{f}r$ is a good polynomial and since $N\langle X \rangle_A$ is an essential submodule of $M\langle X \rangle_A$, we have $frA  \cap N\langle X \rangle_A\neq 0$, that is, there exists $g= b_1x^{\beta_1} + \cdots+ b_tx^{\beta_t} \in A$ such that $0\neq frg\in N\langle X \rangle_A$. By Lemma \ref{PBWCorollary3.5} (2), if $0\neq frg\in N\langle X \rangle_A$, then $g \in {\rm ann}_A(\overline{f}r)= {\rm ann}_R(\overline{f}r)A$ whence $b_i \in {\rm ann}_R(\overline{f}r)$, for every $1 \leq i \leq t$. Thus, $\overline{f}rb_i=0$, and so $frb_i \in N\langle X \rangle_R$. Additionally if $frg \neq 0$, then $0\neq frb_i \in N \langle X \rangle_R$, for some $1\leq i \leq t$. This shows that $ fR \cap N\langle X \rangle_R \neq 0$, and hence $N\langle X \rangle_R$ is an essential submodule of $M\langle X \rangle_R$.
    
    \item[\rm (2)] Since $Z(M_R) = Z(M \langle X \rangle_R) \cap M_R$ if $Z(M\langle X \rangle_R)=0$, then $Z(M_R)=0$. Assume that $Z(M\langle X \rangle_R) \neq 0$ and let $f=m_1x^{\alpha_1} + \cdots + m_kx^{\alpha_k} \in Z(M\langle X \rangle_R)$ with leading coefficient $m_k\neq 0$ and minimal monomial leading, that is, ${\rm lm}(f) \preceq {\rm lm}(g)$, for all $g \in fR$. By Lemma \ref{PBWLemma3.4} (2) and (5), $f$ is a good polynomial and ${\rm ann}_R(f)=\sigma^{-\alpha_k}({\rm ann}_R(m_k))$ is an essential ideal of $R$ which implies that  $0 \neq m_k \in Z(M\langle X \rangle_R) \cap M_R=Z(M_R)$.

    \item[(3)] Assume that $Z(M_R) = 0$ and $Z(M\langle X \rangle_A) \neq 0$. Let $f=m_1x^{\alpha_1} + \cdots + m_kx^{\alpha_k}$ be an element of $Z(M\langle X \rangle_A)$ with leading coefficient $m_k\neq 0$ and minimal monomial leading, that is, ${\rm lm}(f) \preceq {\rm lm}(g)$, for all $g \in fA$. By Lemma \ref{PBWLemma3.4} (3), $f$ is a good polynomial and by Lemma \ref{PBWCorollary3.5} (2), ${\rm ann}_A(f) = \sigma^{-\alpha_k}(I)A$ where $I = {\rm ann}_R(a_k)$. Since $\sigma^{-\alpha_k}(I)A \cap m'A \neq 0$, for all $m'\in M\langle X \rangle_A$, it follows that $\sigma^{-\alpha_k}(I) \cap m'M_R \neq 0$, and so $\sigma^{-\alpha_k}(I)$ is an essential ideal of $R$. Therefore, $0 \neq a_k \in Z(M_R)$ which is a contradiction. This proves that $Z(M\langle X \rangle_A) = 0$ as desired. 
\end{itemize}
\end{proof}

%\begin{corollary}[{\cite[Lemma 4.1]{LeroyMatczuk2004}}]
%If $M_R$ is a right module, and $N_R$ is a submodule of $M_R$, then:
%\begin{itemize}
 %   \item [(1)] $\widehat{N}_R$ is an essential submodule of $\widehat{M}_R$ if and only if $\widehat{N}_S$ is an essential submodule of $\widehat{M}_S$.
  %  \item[(2)] $\mathcal{Z}(M_R)= 0$ if and only if $\mathcal{Z}(\widehat{M}_R)=0$.
  %  \item[(3)] If $\mathcal{Z}(M_R)= 0$, then $\mathcal{Z} (\widehat{M}_S) = 0$.
%\end{itemize}
%\end{corollary}

%Leroy and Matczuk \cite{LeroyMatczuk2004},  investigated new properties of these modules and defined a weak notion of compatibility. It is necessary to say that this definition does not coincide with the weak notion of compatibility considered by Ouyang and Liu \cite[Definition 2.1]{LunqunJingwang2011}. 

If $R$ is a ring, $\sigma$ is an automorphism of $R$, and $\delta$ is a $\sigma$-derivation of $R$, then an ideal $I$ of $R$ is said to be {\em $(\sigma,\sigma^{-1},\delta)$-stable} if $\sigma(I)=I$ and $\delta(I)\subseteq I$ \cite[p. 6]{GoodearlLetzter1994}. Annin \cite{Annin2002} investigated properties of induced modules over skew polynomial rings under the assumption that for any $m \in M_R$, $I={\rm ann}_R(m)$ is a $(\sigma,\sigma^{-1},\delta)$-stable ideal. Leroy and Matczuk \cite{LeroyMatczuk2004} studied these same modules using the following condition: $M_R$ satisfies the {\em weak $(\sigma, \delta)$-compatibility condition} if every non-zero submodule $N_R$ of $M_R$ contains an element $m\neq 0$ such that ${\rm ann}_R(m)$ is $(\sigma, \sigma^{-1}, \delta)$-stable \cite[Definition 4.2]{LeroyMatczuk2004}. Thinking about skew PBW extensions, we consider the following definition: if $R$ is a ring, $\Sigma= \{\sigma_1,\ldots,\sigma_n\}$ is a finite family of endomorphisms of $R$ and $\Delta= \{\delta_1,\ldots,\delta_n\}$ is a finite family of $\Sigma$-derivations of $R$, then an ideal $I$ of $R$ is called {\em $(\Sigma,\Sigma^{-1},\Delta)$-stable} if $\sigma_i(I)=I$ and $\delta_i(I)\subseteq I$, for every $1 \le i \le n$. This definition allows us to extend the compatibility condition as follows.

\begin{definition} Let $R$ be a ring, $\Sigma$ be a finite family of endomorphisms of $R$, and $\Delta$ a finite family of $\Sigma$-derivations of $R$.  We say that a right module $M_R$ satisfies the {\it weak $(\Sigma, \Delta)$-compatibility condition}, if every submodule $N_R$ of $M_R$ contains an element $a \neq 0$ such that $I:={\rm ann}_R(a)$ is $(\Sigma, \Sigma^{-1}, \Delta)$-stable.
\end{definition}

The following lemma is analogous to \cite[Lemma 4.3]{LeroyMatczuk2004}.

\begin{lemma}\label{PBWLemma4.3}
Let $R$ be a ring, $\Gamma=\{\gamma_{i},\ldots,\gamma_{n}\}$ be a family of automorphisms of $R$, and $\Lambda=\{\lambda_{i},\ldots,\lambda_{n}\}$ be a family of $\Gamma$-derivations of $R$. If $A$ is a bijective skew PBW extension over $R$ and $M_R$ is a right weak $(\Gamma, \Lambda)$-compatible module, then:
\begin{itemize}
    \item [\rm (1)] For any good  polynomial  $m \in M\langle X \rangle_A$, there exists $r \in R$ such that $mr$ is good and the annihilator of its leading coefficient is $(\Gamma,\Gamma^{-1},\Lambda)$-stable.
    
    \item[\rm (2)]  Suppose that for the family of endomorphisms $\Sigma=\{\sigma_1, \ldots, \sigma_n\}$ we have $\sigma_i\gamma_j = \gamma_j\sigma_i$ for $1 \leq i,j \leq n$. If there exist some central invertible elements $q_{ij} \in R$ such that $\sigma_i\lambda_j = q_{ij}\lambda_j\sigma_i$ and $\gamma_i$ and $\lambda_i$ can be extended to $A$, then then module $M\langle X \rangle_A$ satisfies the weak $(\Gamma, \Lambda)$-compatibility condition.
\end{itemize}
\end{lemma}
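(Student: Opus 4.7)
The plan for Part~(1) is to convert the weak compatibility assumption on $m_kR$ into the required conclusion about $mr$. Applying weak $(\Gamma, \Lambda)$-compatibility of $M_R$ to the non-zero cyclic submodule $m_kR$ of $M_R$, one obtains a non-zero element $a = m_k s$ in $m_k R$ (for some $s \in R$) whose annihilator ${\rm ann}_R(a)$ is $(\Gamma, \Gamma^{-1}, \Lambda)$-stable. Since $A$ is bijective, $\sigma^{\alpha_k}$ is an automorphism of $R$, so we set $r := \sigma^{-\alpha_k}(s) \in R$. The multiplication rule for $M\langle X\rangle_A$ together with Proposition~\ref{coefficientes} shows that the leading coefficient of $mr$ is $m_k\sigma^{\alpha_k}(r) = m_k s = a \neq 0$. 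Finally, $mr$ is good because, for any $c \in R$ with $(mr)c = m(rc) \neq 0$, the goodness of $m$ yields ${\rm lm}(m(rc)) = {\rm lm}(m) = {\rm lm}(mr)$.

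For Part~(2), the strategy is to combine Lemma~\ref{PBWCorollary3.5}(1) with Part~(1) to produce, inside any non-zero submodule $N$ of $M\langle X\rangle_A$, a good polynomial whose leading coefficient has a $(\Gamma, \Gamma^{-1}, \Lambda)$-stable annihilator. Pick a non-zero $m' \in N$; by Lemma~\ref{PBWCorollary3.5}(1), there exists $r' \in R$ with $m'r' \in N$ good. Applying Part~(1) to $m'r'$ gives $r \in R$ such that $m := m'r'r \in N$ is good and the annihilator $I := {\rm ann}_R(m_k)$ of its leading coefficient $m_k$ is $(\Gamma, \Gamma^{-1}, \Lambda)$-stable. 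By Lemma~\ref{PBWLemma3.4}(5), ${\rm ann}_R(m) = \sigma^{-\alpha_k}(I)$, and by Lemma~\ref{PBWLemma3.4}(6), ${\rm ann}_A(m) = \sigma^{-\alpha_k}(I)A$. It remains to verify that $\sigma^{-\alpha_k}(I)$ is $(\Gamma, \Gamma^{-1}, \Lambda)$-stable in $R$ and to lift this stability to $A$ via the extensions $\gamma_i^{\mathrm{ext}}, \lambda_i^{\mathrm{ext}}$.

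The $\Gamma$-invariance of $\sigma^{-\alpha_k}(I)$ is immediate from the commutation $\sigma_i\gamma_j = \gamma_j\sigma_i$ together with $\gamma_j(I) = I$: indeed $\gamma_j(\sigma^{-\alpha_k}(I)) = \sigma^{-\alpha_k}(\gamma_j(I)) = \sigma^{-\alpha_k}(I)$, and analogously for $\gamma_j^{-1}$. I expect the $\Lambda$-invariance to be the main obstacle, since it requires pushing $\sigma^{\alpha_k}$ past $\lambda_j$. The plan is to iterate $\sigma_i\lambda_j = q_{ij}\lambda_j\sigma_i$, obtaining by induction $\sigma_i^m\lambda_j = \bigl(\prod_{\ell=0}^{m-1}\sigma_i^\ell(q_{ij})\bigr)\lambda_j\sigma_i^m$, and then compose these identities across $i = 1, \ldots, n$, carefully pushing the central invertible scalars that appear past the remaining $\sigma_i$'s (using that each $\sigma_i$ preserves centrality and invertibility, so the emerging product remains a central unit). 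This yields an identity $\sigma^{\alpha_k}\lambda_j = C_{\alpha_k,j}\lambda_j\sigma^{\alpha_k}$ for some central unit $C_{\alpha_k,j} \in R$. Then, for $a \in \sigma^{-\alpha_k}(I)$, one has $\sigma^{\alpha_k}(\lambda_j(a)) = C_{\alpha_k,j}\lambda_j(\sigma^{\alpha_k}(a))$, which lies in $I$ because $\lambda_j(\sigma^{\alpha_k}(a)) \in \lambda_j(I) \subseteq I$ and $I$ is a two-sided ideal; hence $\lambda_j(a) \in \sigma^{-\alpha_k}(I)$.

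To lift these stabilities to the ideal $\sigma^{-\alpha_k}(I)A$ of $A$, I would use that $\gamma_i^{\mathrm{ext}}$ is a ring automorphism of $A$ extending $\gamma_i$, whence $\gamma_i^{\mathrm{ext}}(\sigma^{-\alpha_k}(I)A) = \gamma_i(\sigma^{-\alpha_k}(I))A = \sigma^{-\alpha_k}(I)A$, and the analogous statement for $(\gamma_i^{\mathrm{ext}})^{-1}$; and that $\lambda_i^{\mathrm{ext}}$ is a $\gamma_i^{\mathrm{ext}}$-derivation of $A$ extending $\lambda_i$, so the Leibniz rule gives $\lambda_i^{\mathrm{ext}}(\sigma^{-\alpha_k}(I)A) \subseteq \lambda_i(\sigma^{-\alpha_k}(I))A + \gamma_i(\sigma^{-\alpha_k}(I))\lambda_i^{\mathrm{ext}}(A) \subseteq \sigma^{-\alpha_k}(I)A$. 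This completes the verification and hence Part~(2). The principal technical difficulty is the iterated-commutation bookkeeping in the $\Lambda$-stability step; the centrality and invertibility of the $q_{ij}$'s, preserved under each $\sigma_i$, is exactly what makes that bookkeeping harmless.
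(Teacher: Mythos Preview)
Your proof is correct and follows essentially the same approach as the paper's. In Part~(1) both you and the paper apply weak compatibility to $m_kR$ and then twist by $\sigma^{-\alpha_k}$ to land back in $mR$; in Part~(2) both arguments reduce to showing that $\sigma^{-\alpha_k}(I)A$ is $(\Gamma,\Gamma^{-1},\Lambda)$-stable using the commutation hypotheses. The only minor differences are that (i) to produce a good polynomial inside an arbitrary submodule you invoke Lemma~\ref{PBWCorollary3.5}(1), whereas the paper picks an element of minimal leading monomial and appeals to Lemma~\ref{PBWLemma3.4}(3), and (ii) you spell out the iterated commutation $\sigma^{\alpha_k}\lambda_j = C_{\alpha_k,j}\lambda_j\sigma^{\alpha_k}$ and the Leibniz-rule lift to $A$, which the paper compresses into a single sentence. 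These are cosmetic variations; the substance is the same.
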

\begin{proof}
\begin{itemize}
\item [\rm (1)]  Let $m= m_1x^{\alpha_1} + \cdots + m_kx^{\alpha_k}$ be a non-zero good polynomial with leading coefficient $m_k\neq 0$. If $M_R$ satisfies the weak $(\Gamma, \Lambda)$-compatibility condition, then there exists $r \in
R$ such that $m_kr\neq 0$ and $I = {\rm ann}_R(m_kr)$ is $(\Gamma,\Gamma^{-1},\Lambda)$-stable. By Lemma \ref{PBWLemma3.4}, $m\sigma^{-\alpha_k}(r)$ is a good polynomial with leading coefficient $m_kr$ and $I$ is the annihilator of its leading
coefficient.

\item[\rm (2)] Let $N\langle X \rangle$ be a submodule of $M\langle X \rangle_A$ and $m=m_1x^{\alpha_1} + \cdots + m_kx^{\alpha_k} \in N\langle X \rangle$ be a polynomial of minimal leading monomial in $mA$, that is, $x^{\alpha_k}\prec {\rm lm}(f)$ for every $f \in mA$. By Lemma \ref{PBWLemma3.4} and item (1), $m$ is a good polynomial, and so $m\sigma^{-\alpha_k}(r)$ is also a good polynomial for some $r \in R$ with $I = {\rm ann}_R(m_kr)$ $(\Gamma,\Gamma^{-1},\Lambda)$-stable. Additionally, ${\rm ann}_A(m) = \sigma^{-\alpha_k} (I)A$ by Lemma \ref{PBWLemma3.4}, and since $\sigma_i\gamma_j = \gamma_j\sigma_i$, $\sigma_i\lambda_j = q_{ij}\lambda_j\sigma_i$ for every $1 \leq i,j \leq n$ and some central invertible element $q_{ij}\in R$, and also $\lambda_i$ can be extended to $A$, it follows that ${\rm ann}_A(m)=\sigma^{-\alpha_k}(I)A$ is $(\Gamma, \Gamma^{-1} \Lambda)$-stable. 
\end{itemize}
\end{proof}

%\begin{corollary} [{\cite[Lemma 4.3]{LeroyMatczuk2004}}]
%Let $\tau$, $d$ denoted automorphism and $\tau$-derivation of $R$, respectively. Suppose that $M_R$ satisfies the weak $(\tau, d)$-compatibility condition. Then:
%\begin{itemize}
 %   \item [(1)] For any good polynomial $f \in \widehat{M}_S$ there exists $r \in R$ such that $fr$ is a good polynomial and the annihilator of its leading coefficient is $(\tau, \tau^{-1}, d)$-stable.
  %  \item[(2)] Suppose additionally that $\sigma \tau = \tau \sigma$, there exists a central invertible element $q \in R$ such that $\sigma d = qd\sigma$ and $\tau$ and $d$ can be extended to $S= R[x; \sigma, \delta]$. Then the $S$-module $\widehat{M}_S$ satisfies the weak $(\tau, d)$-compatibility condition.
%\end{itemize}
%\end{corollary}

Leroy and Matczuk showed that the induced modules over skew polynomial rings have good polynomials of any degree \cite[Example 3.3]{LeroyMatczuk2004}. This fact motivated the following definition: a submodule $B_S$ of $\widehat{M}_S$ is called {\it good}, if for any good polynomial $g \in B_S$ and any $n \geq {\rm deg(g)}$, there exists a good polynomial of degree $n$ in $gS$ \cite[Definition 4.4]{LeroyMatczuk2004}. Following this idea and with the purpose of studying properties of induced modules over skew PBW extensions, we introduce the following definition.

\begin{definition}
Let $A$ be a skew PBW extension over $R$ and $M_R$ be a right module. A submodule $N \langle X \rangle_A$ of $M\langle X \rangle _A$ is called {\it good}, if for any good polynomial $ m=m_1x^{\alpha_1} + \cdots +m_kx^{\alpha_k} \in N\langle X \rangle_A$ and any monomial $x^{\beta}$ with $\beta \in \mathbb{N}^n$, there exists a good polynomial $f \in mA$ such that ${\rm lm}(f) = x^{\beta} \succeq x^{\alpha_k}$.
\end{definition}

The following lemma generalizes \cite[Lemma 4.5]{LeroyMatczuk2004}.

\begin{lemma}\label{PBWLemma4.5} Let $A$ be a bijective skew PBW extension over $R$ and $M_R$ be a right module. If one of the following conditions is satisfied
\begin{itemize}
    \item [\rm (1)] $M_R$ is nonsingular.
    \item[\rm (2)] $M_R = R_R$ and for any non-zero element $r \in R$, there is a good polynomial $m\in rA$ with leading monomial $x_i$ for all $i$.
    \item[\rm (3)] $A$ is a skew PBW extension of endomorphism type.
    \item[\rm (4)] $M_R$ satisfies the weak $(\Sigma, \Delta)$-compatibility condition.
\end{itemize}
then ${M}\langle X \rangle_A$ is a good module.
\end{lemma}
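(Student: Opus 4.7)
The plan is to treat the four hypotheses independently, showing in each case that any good polynomial $m = m_1x^{\alpha_1}+\cdots+m_kx^{\alpha_k} \in M\langle X\rangle_A$ admits a good companion in $mA$ with any prescribed leading monomial $x^{\beta}$ satisfying $x^{\beta}\succeq x^{\alpha_k}$.

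Case $(1)$ is essentially free: if $M_R$ is nonsingular, then so is $m_kR$, and Proposition \ref{PBWProposition3.6}(2) delivers the conclusion verbatim. Case $(2)$ is handled by copying the inductive argument used to prove Proposition \ref{PBWProposition3.6}(2). The hypothesis on $R_R$ replaces the use of Proposition \ref{PBWProposition3.6}(1) in that proof: given any non-zero element of $R$, we can find inside its right ideal in $A$ a good polynomial whose leading monomial is $x_i$ for any chosen $i$. Using Lemma \ref{PBWLemma3.4}(7) to translate between $mA$ and $\langle m_k\rangle_{\sigma^{\alpha_k}}A$, we raise the leading monomial one coordinate at a time from $x^{\alpha_k}$ to $x^{\beta}$.

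For case $(3)$, endomorphism type forces $\delta_i=0$, so an easy induction gives the clean commutation rule $x^{\gamma}r = \sigma^{\gamma}(r)x^{\gamma}$ for every $r\in R$ and $\gamma\in\mathbb{N}^n$. Choosing $\gamma$ so that $x^{\alpha_k+\gamma}=x^{\beta}$ (and using that $d_{\alpha_k,\gamma}$ is invertible by bijectivity, so the leading coefficient $m_kd_{\alpha_k,\gamma}$ of $mx^{\gamma}$ is non-zero), one has
\[
(mx^{\gamma})r \;=\; m\,\sigma^{\gamma}(r)\,x^{\gamma},
\]
which vanishes iff $m\sigma^{\gamma}(r)=0$, iff by Lemma \ref{PBWLemma3.4}(4) applied to $m$ the leading coefficient of $mx^{\gamma}$ annihilates $\sigma^{\alpha_k+\gamma}(r)$. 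A second application of Lemma \ref{PBWLemma3.4}(4), this time to $mx^{\gamma}$, shows $mx^{\gamma}$ is good with leading monomial $x^{\beta}$.

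Case $(4)$ is where the real work sits. Applying Lemma \ref{PBWLemma4.3}(1) to $m$, we replace $m$ by $mr$ for a suitable $r\in R$ so that $mr$ is still good and the annihilator $I$ of its leading coefficient is $(\Sigma,\Sigma^{-1},\Delta)$-stable. Now pick $\gamma$ with $x^{\alpha_k+\gamma}=x^{\beta}$ and expand $(mr)x^{\gamma}$ via Proposition \ref{coefficientes}: the leading coefficient is a unit multiple of the leading coefficient of $mr$, and the lower order coefficients produced by the PBW relations are obtained from that leading coefficient by iterated applications of elements of $\Sigma\cup\Delta$. Stability of $I$ under $\sigma_i$, $\sigma_i^{-1}$ and $\delta_i$ guarantees that every such coefficient is annihilated exactly by $\sigma^{-(\alpha_k+\gamma)}(I)$, so Lemma \ref{PBWLemma3.4}(5) certifies that $(mr)x^{\gamma}$ is good with leading monomial $x^{\beta}$, as required.

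The main obstacle is case $(4)$: controlling how the derivations introduced by the commutation relations of the skew PBW extension distort the annihilator of the new leading coefficient. The stability afforded by Lemma \ref{PBWLemma4.3}(1) is exactly the book-keeping device that makes the argument go through, and the other three cases are either direct consequences of earlier results (case $(1)$), straightforward adaptations of them (case $(2)$), or simplifications in the absence of derivations (case $(3)$).
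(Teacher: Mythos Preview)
Your treatment of cases (1)--(3) matches the paper's: Proposition~\ref{PBWProposition3.6} for (1), the step-by-step coordinate-raising for (2), and the direct verification that $mx^{\gamma}$ is good in endomorphism type for (3).

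Case (4), however, has a genuine gap. You claim that when expanding $(mr)x^{\gamma}$, ``the lower order coefficients produced by the PBW relations are obtained from that leading coefficient by iterated applications of elements of $\Sigma\cup\Delta$.'' This is not true, and indeed does not even type-check: the leading coefficient lies in $M$, while $\sigma_i,\delta_i$ are maps $R\to R$. What actually happens when you right-multiply by $x^{\gamma}$ is that each term $m'_j x^{\alpha_j}x^{\gamma}$ expands via Proposition~\ref{coefficientes}(2) as $m'_j(d_{\alpha_j,\gamma}x^{\alpha_j+\gamma}+p_{\alpha_j,\gamma})$, and the coefficients of $p_{\alpha_j,\gamma}$ are structure constants of $A$ (the $d_{i,j}$ and $r_k^{(i,j)}$), not evaluations of $\sigma$'s or $\delta$'s on anything. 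Stability of $I$ gives you no control over $m'_j\cdot(\text{structure constant})$.

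The paper's argument avoids this by never analysing the coefficients of $(mr)x^{\gamma}$ directly. Instead, to test goodness one takes $s\in R$ and writes $(mr)x^{\gamma}s = (mr)(x^{\gamma}s)$, expanding $x^{\gamma}s = \sum_{\theta} c_{\theta}x^{\theta}$ where now each $c_{\theta}$ \emph{is} a sum of compositions of $\sigma$'s and $\delta$'s applied to $s$ (this is where Proposition~\ref{coefficientes}(1) enters). If the leading term of $(mr)x^{\gamma}s$ vanishes then $\sigma^{\alpha_k+\gamma}(s)\in I$, hence by $(\Sigma,\Sigma^{-1})$-stability $s\in I$, hence each $c_{\theta}\in I$ by $(\Sigma,\Delta)$-stability, and then goodness of $mr$ together with $\Sigma$-stability of $I$ forces $(mr)c_{\theta}=0$ for every $\theta$, so $(mr)x^{\gamma}s=0$. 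The $\sigma/\delta$-applications land on the test element $s$, not on the module coefficients---that is the point you are missing.
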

\begin{proof}
\begin{itemize}
    \item[\rm (1)] This statement is a direct consequence of Proposition \ref{PBWProposition3.6}. 
    
    \item[\rm (2)] If $f=a_1x^{\alpha_1} + \cdots + a_kx^{\alpha_k}$ is a good polynomial of $A$ with leading monomial $x^{\alpha_k}$, then there exists an $A$-module isomorphism $\phi$ of $fA$ over $\langle a_k\rangle_{\sigma^{\alpha_k}}A$ such that $\phi(fg) := a_k\sigma^{\alpha_k}(b_1)x^{\beta_1} + \cdots +a_k \sigma^{\alpha_k}(b_j)x^{\beta_j} $, for every $g \in A$ with leading monomial $x^{\beta_j}$ for any $\beta_j \in \mathbb{N}^{n}$, by Lemma \ref{PBWLemma3.4} (7). Furthermore, we have $\sigma^{-\alpha_k}({\rm ann}_R(a_k))={\rm ann}_R(f)$, by Lemma \ref{PBWLemma3.4} (5), which implies that the leading monomial of $\phi(fg)$ is $x^{\beta_j}$ if and only if the leading monomial of $fg$ is $x^{\alpha_k + \beta_j}$. Therefore, $fg \in fA$ is a good polynomial if and only if $a_k{\sigma^{\alpha_k}}(g) \in \langle a_k \rangle_{\sigma^{\alpha_k}}A$ is a good polynomial by Lemma \ref{PBWLemma3.4} (2). 
    
    If $x^{\beta}$ is a monomial such that $x^{\alpha_k} \preceq x^{\beta}$, for some $\beta \in \mathbb{N}^n $, we must find a good polynomial with leading monomial $x^{\beta}$. By assumption, if $a_k \in R$, there exists $g'=\phi(fg) \in \langle a_k\rangle_{\sigma^{\alpha_k}}A$ a good polynomial such that the leading monomial of $g'$ is $x_i$, for all $i$. Following the same argument of Proposition \ref{PBWProposition3.6}, we find a good polynomial $\overline{f}$ of $fA$ with leading monomial $x^{\beta}$ in at most $n\cdot {\rm max}\{|\beta_{i} - \alpha_{ki}| \}$ steps, proving that $A_A$ is a good module.
    \item[\rm (3)] Let $m=m_1x^{\alpha_1} + \cdots + m_kx^{\alpha_k} \in M\langle X \rangle_A$ be a good polynomial with leading coefficient $m_k\neq 0$. If ${\rm lm}(mx^{\alpha}r) \prec {\rm lm}(mx^{\alpha})$ for some $r \in R$, then $m_k\sigma^{\alpha_k}(\sigma^{\alpha}(r))=0$. Thus, ${\rm lm}(m\sigma^{\alpha}(r)) \prec {\rm lm}(m)$, which contradicts that $m$ is good polynomial. Hence, $mx^{\alpha}$ is also good polynomial for any $\alpha \in \mathbb{N}^n$. 
    
    \item[\rm (4)] Let $m=m_1x^{\alpha_1} + \cdots + m_kx^{\alpha_k} \in M \langle X \rangle_A$ be a good polynomial with leading coefficient $m_k\neq 0$. We claim that $mx^{\beta}$ is good, for any $\beta \in \mathbb{N}^n$. If $x^{\alpha_k + \beta}$ is the leading monomial of $mx^{\beta}$ and ${\rm lm}(mx^{\beta}r)\prec {\rm lm}(mx^{\beta})$, for some $r \in R$, then $m_k\sigma^{\alpha_k+ \beta}(r)=0$. By Lemma \ref{PBWLemma4.3} (1), ${\rm ann}_R(m_k)$ is $(\Sigma, \Sigma^{-1}, \Delta)$-stable, and so $r \in {\rm ann}_R(m_k)$. Furthermore, if ${\rm ann}_R(m_k)$ is $(\Sigma, \Sigma^{-1}, \Delta)$-stable, then $mx^{\beta}r = 0$ which proves that $mx^{\beta}$ is a good polynomial for any $\beta\in \mathbb{N}^n$. 
\end{itemize}
\end{proof}

%\begin{corollary}[{\cite[Lemma 4.5]{LeroyMatczuk2004}}]
%The induced module $\widehat{M}_S$ is good if one of the following conditions is satisfied:
%\begin{itemize}
 %   \item [(1)] $M_R$ is nonsingular.
  %  \item[(2)] $M_R = R_R$ and for any non-zero $a \in R$ there exists a good polynomial of degree one in $aS_S$.
  %  \item[(3)] $\delta = 0$.
  % \item[(4)] $M_R$ satisfies the weak $(\sigma, \delta)$-compatibility condition.
%\end{itemize}
%\end{corollary}

Recall that $M_R$ is said to be a {\em uniform module} if every submodule of $N_R$ of $M_R$ is an essential submodule. Equivalently, $M_R$ is called {\em uniform} if the intersection of any two non-zero submodules of $M_R$ is non-zero \cite[p. 84]{Lam1998}. Theorem \ref{PBWTheorem4.6} characterizes the essential and uniform submodules of the induced module ${M}\langle X \rangle_A$.

\begin{theorem}\label{PBWTheorem4.6}
Let $A$ be bijective a skew PBW extension over $R$, $M_R$ be a right module, and $N_R$ be a submodule of $M_R$. If ${N}\langle X \rangle_A$ is a good module, then:
\begin{itemize}
    \item [\rm (1)] $N_R$ is essential in $M_R$ if and only if ${N}\langle X \rangle_A$ is essential in ${M}\langle X \rangle_A$.
    
    \item[\rm (2)] $N_R$ is uniform if and only if ${N}\langle X \rangle_A$ is uniform.
    \end{itemize}
\end{theorem}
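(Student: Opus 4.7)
I plan to prove both parts by splitting each equivalence into two implications, with the direction passing from a property of $N\langle X\rangle_A$ back to $N_R$ being the straightforward one, while the reverse direction essentially uses the good-module hypothesis.

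For the easy direction of (1) (namely $N\langle X\rangle_A$ essential in $M\langle X\rangle_A$ implies $N_R$ essential in $M_R$), given $0\neq n\in M$ viewed as a degree-$0$ element of $M\langle X\rangle_A$, essentiality supplies $f\in A$ with $0\neq nf\in N\langle X\rangle_A$. Since $n$ has degree zero, the multiplication rule recorded after Proposition \ref{coefficientes} collapses: writing $f=\sum_\beta b_\beta x^\beta$ one has $nf=\sum_\beta (nb_\beta)x^\beta$ with no $\sigma,\delta$-twisting. Every coefficient $nb_\beta$ lies in $N$ and some is nonzero, so $0\neq nb_\beta\in nR\cap N$. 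The same degree-$0$ collapse proves the easy direction of (2): for any nonzero $R$-submodule $K\subseteq N$, the $A$-submodule $KA\subseteq N\langle X\rangle_A$ is nonzero with all coefficients in $K$, so by uniformity $KA$ is essential in $N\langle X\rangle_A$; applying this to arbitrary $0\neq n\in N$ gives $0\neq nb_\beta\in nR\cap K$, hence $K$ is essential in $N$.

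For the hard direction of (1), I take $0\neq m\in M\langle X\rangle_A$ and use Lemma \ref{PBWCorollary3.5}(1) to replace it by $mr$, reducing to the case where $m=m_1x^{\alpha_1}+\cdots+m_kx^{\alpha_k}$ is good with $m_k\neq 0$. Essentiality of $N$ in $M$ yields $s\in R$ with $0\neq m_ks\in N$, and $m_ks$ viewed in $N\langle X\rangle_A$ is a good polynomial of degree zero. The good-module hypothesis then produces a good polynomial $q\in (m_ks)A\subseteq N\langle X\rangle_A$ with ${\rm lm}(q)=x^{\alpha_k}$; write ${\rm lc}(q)=(m_ks)b$ for some $b\in R$. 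By bijectivity of $\sigma^{\alpha_k}$, the element $m\sigma^{-\alpha_k}(sb)\in mA$ has the same leading term $(m_ksb)x^{\alpha_k}$ as $q$. The isomorphism $\phi$ of Lemma \ref{PBWLemma3.4}(7) identifies $mA$ with $\langle m_k\rangle_{\sigma^{\alpha_k}}A$; using this identification together with a descent on leading monomials, where at each stage Lemma \ref{PBWCorollary3.5}(1) is re-applied to the residual (after cancellation of matching leading terms) and the good-module property is invoked on the resulting good polynomials in $N\langle X\rangle_A$, produces a nonzero element of $mA\cap N\langle X\rangle_A$.

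For the hard direction of (2), take nonzero $A$-submodules $K_1,K_2\subseteq N\langle X\rangle_A$ and pick good $p_i\in K_i$ (possible by Lemma \ref{PBWCorollary3.5}(1)); since $p_i\in N\langle X\rangle_A$, all their coefficients and in particular ${\rm lc}(p_i)$ lie in $N$. Uniformity of $N$ gives $0\neq c={\rm lc}(p_1)r_1={\rm lc}(p_2)r_2\in N$. The good-module property applied to each $p_i$ produces good polynomials $\tilde p_i\in p_iA\subseteq K_i$ with a common leading monomial $x^\mu$ for any $\mu\succeq\max({\rm lm}(p_1),{\rm lm}(p_2))$, and a further right-multiplication by appropriate elements of $R$ (aligning leading coefficients via bijectivity of $\sigma^\mu$ combined with the earlier intersection in $N$) makes their leading terms agree. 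If the two constructed elements coincide, their common value is a nonzero element of $K_1\cap K_2$; otherwise the difference lies in $K_1+K_2$ with strictly smaller leading monomial, and iterating terminates (by well-foundedness of the order on ${\rm Mon}(A)\cong \mathbb{N}^n$) with the desired nonzero element of $K_1\cap K_2$.

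The main obstacle is the forward direction of (1): essentiality of $N$ in $M$ controls only the leading coefficient of a good polynomial, whereas membership in $N\langle X\rangle_A$ requires every coefficient to lie in $N$. Bridging this gap requires a careful combination of Lemma \ref{PBWLemma3.4}(7), the bijectivity of the $\sigma_i$, and iterative use of the good-module hypothesis, and is the delicate step where the assumption on $N\langle X\rangle_A$ enters in an essential way.
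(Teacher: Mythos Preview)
Your treatment of part (1) is essentially the paper's argument: both directions and the descent on leading monomials match, though the paper first invokes Lemma \ref{PBWLemma4.1}(1) to reduce to showing $N\langle X\rangle_R$ is essential in $M\langle X\rangle_R$ and then runs an explicit induction, concluding with the observation that two good polynomials with the same leading term share the same annihilator in $R$ (so the final product $mrws$ is nonzero). Your sketch omits these bookkeeping details but follows the same route.

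Part (2), hard direction, has a genuine gap. After aligning leading terms you obtain $\tilde p_1\in K_1$ and $\tilde p_2\in K_2$ with ${\rm lt}(\tilde p_1)={\rm lt}(\tilde p_2)$; if they differ, the difference $\tilde p_1-\tilde p_2$ lies in $K_1+K_2$, not in $K_1$ or $K_2$, and has strictly smaller leading monomial. But there is nothing to iterate on: you cannot feed $\tilde p_1-\tilde p_2$ back into the construction and stay inside $K_1$ or $K_2$ separately, and the leading monomials of $\tilde p_1,\tilde p_2$ themselves have not decreased. Well-foundedness of the monomial order controls the \emph{difference}, not a pair in $K_1\times K_2$, so the claimed termination in $K_1\cap K_2$ is unjustified.

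The paper avoids this by an asymmetric minimal-counterexample argument: fix $g$ and choose a good $f$ with $fA\cap gA=0$ of minimal leading monomial. Using the good-module hypothesis one lifts $g$ to a good $h\in gA$ with ${\rm lm}(h)={\rm lm}(f)$, aligns leading terms to form $z=f\sigma^{-\alpha_k}(r)-h\sigma^{-\alpha_k}(s)$ with ${\rm lm}(z)\prec{\rm lm}(f)$, and by minimality obtains $0\neq zv_1=gv_2$. Then $f\sigma^{-\alpha_k}(r)v_1=gv_2+h\sigma^{-\alpha_k}(s)v_1\in fA\cap gA=0$; since $f\sigma^{-\alpha_k}(r)$ and $h\sigma^{-\alpha_k}(s)$ are good with the same leading term they share the same annihilator, forcing $h\sigma^{-\alpha_k}(s)v_1=0$ and hence $gv_2=0$, a contradiction. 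The missing ingredient in your sketch is precisely this: keep one side fixed, descend on the other, and use the common-annihilator property to push the intersection information back from $zA\cap gA$ to $fA\cap gA$.
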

\begin{proof}
\begin{itemize}
    \item[\rm (1)] If $T_R$ is a submodule of $M_R$ such that $T_R \cap N_R = 0$, we have that $T\langle X \rangle_A \cap N\langle X \rangle_A  = 0$, which proves that if $N\langle X \rangle_A$ is essential, then $N_R$ is essential. Suppose that $N_R$ is an essential submodule of $M_R$. By Lemma \ref{PBWLemma4.1} (1), we need to show that  $N\langle X \rangle_R$ is an essential submodule in $M\langle X \rangle_R$, that is, $mR \cap N\langle X \rangle_R \neq 0$, for any $m \in M\langle X \rangle_R$. We proceed by induction on the monomials. By Lemma \ref{PBWCorollary3.5}, we may assume that $m= m_1x^{\alpha_1} + \cdots + m_kx^{\alpha_k} \in M \langle X \rangle_R$ is a good polynomial. If $\alpha_k = 0$, then $m=m_k \in M_R$, and $ mR \cap N_R\neq 0$ because $N_R$ is essential. Suppose the statement is true for any leading monomial $x^{\beta}$ such that $x^{\beta}\prec x^{\alpha_k}$ with $\beta \in \mathbb{N}^n$. If $N_R$ is an essential submodule in $M_R$, then $m_kR \cap N_R \neq 0$, whence $m_k\sigma^{\alpha_k}(r) \in N_R$, for some $r \in R$. The element $m_k\sigma^{\alpha_k}(r) \in N \subseteq N\langle X \rangle_A$ is a good polynomial, and since $N\langle X \rangle_A$ is a good module, there exists a good polynomial $g \in m_k\sigma^{\alpha_k}(r)A \subseteq N\langle X \rangle_A$ such that ${\rm lm}(g) = {\rm lm}(mr)$. The leading coefficient of $g$ belongs to $m_k\sigma^{\alpha_k}(r)R$, so there exists $w \in R$ such that $mrw$ and $g$ have the same leading coefficient, and then $g$ and $mrw$ have the same leading term. If $g = mrw$, it follows that $g \in mR \cap N\langle X \rangle_R$ which proves that $N\langle X \rangle_R$ is essential in $M\langle X \rangle_R$. If $g \neq mrw$, then $mrw-g \neq 0$ and since $mrw$ and $g$ have the same leading term, the leading monomial of $mrw-g$ is $x^{\beta}$ for some $\beta \in \mathbb{N}^n$ with $x^{\beta} \prec x^{\alpha_k}$. Thus, by the inductive hypothesis, there exists $s \in R$ such that $h:= (mrw - g)s \in N\langle X \rangle_R$ with $h\neq 0$. Since $mrw$ and $g$ are good polynomials of the same leading term, they have the same annihilator in $R$. In this way, if $mrws = 0$, then $gs = 0$ and so $h=0$ which is a contradiction. Therefore, we have $mrws \neq 0$, and $0 \neq mrws = gs + h \in mR \cap N\langle X \rangle_R$. 
    
    \item[\rm (2)] It is clear that if $N\langle X \rangle_A$ is a uniform module, then $N_R$ is a uniform module. For the other implication, suppose that $N_R$ is a uniform module. If $N\langle X \rangle_A$ is not uniform, then there exist non-zero polynomials $f, g \in N\langle X \rangle_R$ such that $fA \cap gA = 0$ with $f=n_1x^{\alpha_1} + \cdots + n_kx^{\alpha_k}$ and $g=n_1'x^{\beta_1} + \cdots+ n_l'x^{\beta_l}$. By Lemma \ref{PBWCorollary3.5} (1), we may assume that $f$ and $g$ are good polynomials with ${\rm lm}(g)\preceq {\rm lm}(f)$. Since $N\langle X \rangle_A$ is a good submodule of $M\langle X \rangle_A$, there is a good polynomial $h \in gA$ with ${\rm lm} (h) = {\rm lm} (f)$. Let $n_k$ and $n_t''$ be the leading coefficients of the polynomials $f$ and $h$, respectively. Since $n_k, n_t'' \in N_R$ and $N_R$ is uniform, there exist $r, s \in R$ such that $n_kr = n_t''s \neq 0$. Consider the polynomial $z = f\sigma^{-\alpha_k}(r) - h\sigma^{-\alpha_k}(s) \in N\langle X \rangle_A$ with leading monomial $x^{\beta}$ for some $\beta \in \mathbb{N}$ where $x^{\beta} \prec x^{\alpha_k}$. If $f\sigma^{-\alpha_k}(r) =h\sigma^{-\alpha_k}(s) \in gA$, then $f\sigma^{-\alpha_k}(r)=0$ since $fA \cap gA = 0$. Thus, $f\sigma^{-\alpha_k}(r) \neq h\sigma^{-\alpha_k}(s)$ whence $z\neq 0$. Since $f$ and $g$ are good polynomials with ${\rm lm}(g)\preceq {\rm lm}(f)$, $zA \cap gA \neq 0$. So, there are $v_1, v_2 \in A$ such that $gv_2 = zv_1 = f\sigma^{-\alpha_k}(r)v_1 - h\sigma^{-\alpha_k}(s)v_1$. Since $f\sigma^{-\alpha_k}(r)$ and $h\sigma^{-\alpha_k}(s)$ are good polynomials of the same leading term and $f\sigma^{-\alpha_k}(r)v_1 = gv_2 + h\sigma^{-\alpha_k}(s)v_1 \in fA\cap gA = 0$, then they have the same annihilators in $A$, and hence $h\sigma^{-\alpha_k}(s)v_1 = 0$ which is a contradiction. Therefore, $N\langle X \rangle_A$ is a uniform submodule of $M\langle X \rangle_A$.
\end{itemize}
\end{proof}

\begin{corollary} [{\cite[Theorem 4.6]{LeroyMatczuk2004}}]
Let $S:= R[x;\sigma,\delta]$ and $N_R$ be a submodule of $M_R$ such that $\widehat{N}_S$ is good. Then:
\begin{itemize}
    \item [\rm (1)] $N_R$ is essential in $M_R$ if and only if $\widehat{N}_S$ is essential in $\widehat{M}_S$.
    
    \item[\rm (2)] $N_R$ is uniform if and only if $\widehat{N}_S$ is uniform.
\end{itemize}
\end{corollary}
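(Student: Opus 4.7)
The plan is to derive this corollary as an immediate specialization of Theorem \ref{PBWTheorem4.6} to the one-variable case. First I would observe that the skew polynomial ring $S = R[x;\sigma,\delta]$ with $\sigma$ an automorphism of $R$ fits exactly into Definition \ref{def.skewpbwextensions} with $n=1$: the single indeterminate $x_1 = x$ satisfies $xr = \sigma(r)x + \delta(r) \in Rx + R$, and the relation (iv) is vacuous since there is only one generator. Moreover $\Sigma = \{\sigma\}$ consists of automorphisms, so $S$ is a bijective skew PBW extension over $R$ in the sense of Definition \ref{quasicommutative}(ii).

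Next, I would identify the induced modules. By construction, $\widehat{M}_S = M \otimes_R S$ coincides with $M\langle X\rangle_S$, since in both cases the underlying set consists of finite expressions $m_0 + m_1 x + \cdots + m_k x^k$ with $m_i \in M$, and the right $S$-action in Section \ref{Preliminares} reduces, when $n=1$, exactly to the standard induced action coming from $xr = \sigma(r)x + \delta(r)$. Under this identification, $\widehat{N}_S = N\langle X \rangle_S$ for every submodule $N_R \subseteq M_R$, and the notion of good polynomial in $\widehat{M}_S$ used by Leroy and Matczuk coincides with the one in Definition \ref{goodpolynomial}, because $\mathrm{Mon}(S) = \{x^k : k \in \mathbb{N}\}$ is totally ordered by degree, so $\mathrm{lm}(mr) = \mathrm{lm}(m)$ is equivalent to $\deg(mr) = \deg(m)$. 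Consequently, $\widehat{N}_S$ is good in the sense of \cite[Definition 4.4]{LeroyMatczuk2004} if and only if $N\langle X \rangle_S$ is good in the sense introduced in this section.

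Finally, I would invoke Theorem \ref{PBWTheorem4.6}: assuming $\widehat{N}_S = N\langle X \rangle_S$ is good, statement (1) of the theorem yields that $N_R$ is essential in $M_R$ if and only if $N\langle X\rangle_S = \widehat{N}_S$ is essential in $M\langle X \rangle_S = \widehat{M}_S$, and statement (2) yields the analogous equivalence for uniform submodules. There is no genuine obstacle here; the entire content of the argument is the identification of the one-variable bijective skew PBW extension $\sigma(R)\langle x \rangle$ with the Ore extension $R[x;\sigma,\delta]$ and the compatibility of the respective notions of induced module and good polynomial. Once this dictionary is in place the corollary follows by direct substitution.
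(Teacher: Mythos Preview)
Your proposal is correct and matches the paper's approach: the corollary is stated immediately after Theorem \ref{PBWTheorem4.6} with no separate proof, so it is understood as the direct specialization to the one-variable bijective skew PBW extension $S=R[x;\sigma,\delta]$, exactly as you describe. Your explicit verification that the induced module and the notion of good polynomial/module agree with those of Leroy and Matczuk simply makes this implicit identification precise.
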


We recall that a module $M_R$ has {\em finite uniform dimension} if there exist uniform submodules $U_1, \ldots , U_n$ of $M_R$ such that $U_1 \oplus \cdots \oplus U_n$ is an essential submodule of $M_R$ \cite[Definition 6.2]{Lam1998}. In this case, the {\em uniform dimension} of $M_R$ is denoted by ${\rm udim}(M_R) = n<\infty$. It is not difficult to see that a right module $M_R$ has infinite uniform dimension if and only if $M_R$ contains an infinite direct sum of non-zero submodules \cite[Proposition 6.4]{Lam1998}. 

Theorem \ref{PBWTheorem4.9} establishes sufficient conditions to guarantee that $M_R$ and ${M}\langle X \rangle_A$ have the same uniform dimension (c.f. \cite[Proposition 4.10]{Reyes2014}).

\begin{theorem}\label{PBWTheorem4.9}
Let $A$ be a bijective skew PBW extension over $R$ and $M_R$ be a right module. If ${M}\langle X \rangle_A$ is a good module, then ${\rm udim}({M}\langle X \rangle_A) = {\rm udim}(M_R)$.
\end{theorem}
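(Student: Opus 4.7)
The plan is to reduce the statement to the two transfer principles already established in Theorem \ref{PBWTheorem4.6}: essentiality and uniformity pass between $N_R \subseteq M_R$ and $N\langle X\rangle_A \subseteq M\langle X\rangle_A$ whenever the relevant induced submodule is good. The first preliminary step is to observe that if $M\langle X\rangle_A$ is a good module and $N_R$ is any submodule of $M_R$, then $N\langle X\rangle_A$ is automatically a good submodule of $M\langle X\rangle_A$: any good polynomial $m \in N\langle X\rangle_A$ is also a good polynomial of $M\langle X\rangle_A$, so the hypothesis provides, for each monomial $x^{\beta}$, a good polynomial $f \in mA$ with $\mathrm{lm}(f)=x^{\beta}$, and since $N\langle X\rangle_A$ is an $A$-submodule we have $f \in mA \subseteq N\langle X\rangle_A$. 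This will allow applying Theorem \ref{PBWTheorem4.6} freely to the uniform submodules of $M_R$.

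First I would handle the infinite case. If $\mathrm{udim}(M_R)=\infty$, then $M_R$ contains an infinite direct sum $\bigoplus_{i\in \mathbb{N}} N_i$ of non-zero submodules. Because $A$ is a free left $R$-module (condition (ii) of Definition \ref{def.skewpbwextensions}), tensoring with $A$ is exact and commutes with direct sums, so
\begin{equation*}
\sum_{i\in \mathbb{N}} N_i\langle X\rangle_A \;=\; \Bigl(\bigoplus_{i\in \mathbb{N}} N_i\Bigr)\otimes_R A \;=\; \bigoplus_{i\in \mathbb{N}} N_i\langle X\rangle_A
\end{equation*}
is an infinite direct sum of non-zero $A$-submodules of $M\langle X\rangle_A$, forcing $\mathrm{udim}(M\langle X\rangle_A)=\infty$. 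Conversely, the finite case below (together with the contrapositive) will show that infinite uniform dimension of $M\langle X\rangle_A$ forces the same for $M_R$.

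Now suppose $\mathrm{udim}(M_R)=n<\infty$ and pick uniform submodules $U_1,\dots,U_n$ of $M_R$ whose direct sum $U_1\oplus\cdots\oplus U_n$ is essential in $M_R$. By the preliminary observation each $U_i\langle X\rangle_A$ is a good submodule of $M\langle X\rangle_A$, so Theorem \ref{PBWTheorem4.6}(2) yields that every $U_i\langle X\rangle_A$ is uniform as an $A$-module. The same direct-sum-with-tensor argument gives
\begin{equation*}
U_1\langle X\rangle_A \oplus \cdots \oplus U_n\langle X\rangle_A \;=\; (U_1\oplus\cdots\oplus U_n)\langle X\rangle_A,
\end{equation*}
and since $U_1\oplus\cdots\oplus U_n$ is an essential (good) submodule of $M_R$, Theorem \ref{PBWTheorem4.6}(1) shows that this sum is essential in $M\langle X\rangle_A$. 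Therefore $\mathrm{udim}(M\langle X\rangle_A)=n=\mathrm{udim}(M_R)$, and the infinite converse follows by contraposition.

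The only real obstacle I anticipate is the preliminary observation that goodness descends to $N\langle X\rangle_A$ for arbitrary submodules $N_R\subseteq M_R$; once that is in hand, the proof is essentially bookkeeping with the exactness of $-\otimes_R A$ (which is clean because $A$ is $R$-free on $\mathrm{Mon}(A)$) and two direct invocations of Theorem \ref{PBWTheorem4.6}. No compatibility or singularity hypotheses on $M_R$ are needed beyond what is encoded in the assumption that $M\langle X\rangle_A$ is good.
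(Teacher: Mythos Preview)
Your proposal is correct and follows essentially the same route as the paper's proof: apply Theorem~\ref{PBWTheorem4.6} to each uniform summand $U_i$ and to the essential direct sum $U_1\oplus\cdots\oplus U_n$, and handle the infinite case by pushing an infinite direct sum through $-\otimes_R A$. Your preliminary observation that goodness of $M\langle X\rangle_A$ descends to every $N\langle X\rangle_A$ is exactly the hypothesis needed to invoke Theorem~\ref{PBWTheorem4.6}, and the paper uses it implicitly without stating it; your proof is in fact slightly more careful on this point.
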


\begin{proof}
Assume that ${\rm udim}(M_R) = k$. There exist $N_1, \ldots, N_k$ uniform submodules of $M_R$ such that $N_1 \oplus \cdots \oplus N_k$ is an essential submodule of $M_R$. By Theorem \ref{PBWTheorem4.6}, we have $N_1\langle X \rangle_A,\ldots, N_k\langle X \rangle_A$ are uniform submodules of $M\langle X \rangle_A$, and the submodule $N_1\langle X \rangle_A \oplus \cdots \oplus N_k\langle X \rangle_A$ is essential in $M\langle X \rangle_A$, whence ${\rm udim}(M\langle X \rangle_A)=k$.

If ${\rm udim}(M_R) = \infty$, there exist non-zero submodules $N_1, N_2, \ldots$ of $M_R$ such that $N_1 \oplus N_2 \oplus \cdots$ is a submodule of $M_R$. Thus, every $0 \neq N_i\langle X \rangle_A$ is a submodule of $M\langle X \rangle_A$ for every $i \ge 1$, and $N_1\langle X \rangle_A\oplus N_2\langle X \rangle_A\oplus \cdots$ is a submodule of $M\langle X \rangle_A$, which implies that ${\rm udim}(M\langle X \rangle_A)= \infty$. Therefore, ${\rm udim}(M\langle X \rangle_A)={\rm udim}(M_R)$.
\end{proof}

\begin{corollary}[{\cite[Theorem 4.9]{LeroyMatczuk2004}}] Let $S:= R[x;\sigma,\delta]$ and $M_R$ be a right module. If  $\widehat{M}_S$ is good, then ${\rm udim}(\widehat{M}_S) = {\rm udim}(M_R)$.
\end{corollary}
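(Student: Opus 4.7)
The plan is to transfer uniform submodules and essential direct sums from $M_R$ to $M\langle X \rangle_A$ using Theorem \ref{PBWTheorem4.6}, handling the finite and infinite cases separately. The key functorial observation in the background is that the induction $N \mapsto N\langle X \rangle_A$ commutes with finite direct sums and sends submodules of $M_R$ to $A$-submodules of $M\langle X \rangle_A$.

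For the finite case, I would assume ${\rm udim}(M_R) = k < \infty$ and choose uniform submodules $N_1, \ldots, N_k$ of $M_R$ whose direct sum is essential in $M_R$. Before invoking Theorem \ref{PBWTheorem4.6}, I would first verify that each $N_i\langle X \rangle_A$ and $(N_1 \oplus \cdots \oplus N_k)\langle X \rangle_A$ inherits the good-module property from $M\langle X \rangle_A$: if $m \in N_i\langle X \rangle_A$ is a good polynomial, then it is also good when viewed in $M\langle X \rangle_A$, so the hypothesis supplies a good polynomial $f \in mA$ with prescribed leading monomial, and since $mA \subseteq N_i\langle X \rangle_A$, this $f$ witnesses the good-module property for $N_i\langle X \rangle_A$. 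With this in hand, Theorem \ref{PBWTheorem4.6}(2) gives that each $N_i\langle X \rangle_A$ is uniform. Since extension of scalars to $A$ preserves finite direct sums, one has the equality of $A$-submodules $N_1\langle X \rangle_A \oplus \cdots \oplus N_k\langle X \rangle_A = (N_1 \oplus \cdots \oplus N_k)\langle X \rangle_A$, and Theorem \ref{PBWTheorem4.6}(1) applied to the essential submodule $N_1 \oplus \cdots \oplus N_k$ of $M_R$ yields that this direct sum is essential in $M\langle X \rangle_A$. By the very definition of uniform dimension, this forces ${\rm udim}(M\langle X \rangle_A) = k$.

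For the infinite case, suppose ${\rm udim}(M_R) = \infty$, so that $M_R$ contains an infinite direct sum $\bigoplus_{i \geq 1} N_i$ of non-zero submodules. Each $N_i\langle X \rangle_A$ is a non-zero $A$-submodule of $M\langle X \rangle_A$, and again because the induction functor preserves direct sums, the sum $\bigoplus_{i \geq 1} N_i\langle X \rangle_A$ is internal and direct inside $M\langle X \rangle_A$. Hence $M\langle X \rangle_A$ also contains an infinite direct sum of non-zero submodules, which forces ${\rm udim}(M\langle X \rangle_A) = \infty$.

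The only non-routine point is the hereditary check that $N_i\langle X \rangle_A$ (and the direct sum $(N_1 \oplus \cdots \oplus N_k)\langle X \rangle_A$) inherits the good-module property from $M\langle X \rangle_A$, which is necessary in order for Theorem \ref{PBWTheorem4.6} to apply. Beyond that, the argument is a direct functorial translation: uniformity and essentialness transport across the induced-module construction via Theorem \ref{PBWTheorem4.6}, while direct sums transport via preservation under extension of scalars.
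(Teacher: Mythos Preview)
Your proposal is correct and follows essentially the same route as the paper: the corollary is obtained from Theorem~\ref{PBWTheorem4.9}, whose proof splits into the finite and infinite cases and transports uniformity and essentialness via Theorem~\ref{PBWTheorem4.6}, exactly as you do. Your write-up is in fact slightly more careful than the paper's, since you explicitly verify that each $N_i\langle X\rangle_A$ inherits the good-module property from $M\langle X\rangle_A$ before invoking Theorem~\ref{PBWTheorem4.6}; the paper simply appeals to Theorem~\ref{PBWTheorem4.6} without spelling this out.
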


\section{Associated primes ideals of induced modules}\label{Associatedprimes}

In this section, we characterize the associated primes of induced modules over skew PBW extensions (c.f. \cite[Section 3]{NinoRamirezReyes}). We recall that a right module $N_R$ is called {\em prime} if $N_R \neq 0$ and ${\rm ann}_R(N_R) = {\rm ann}_R(N_R^{'})$, for every non-zero submodule $N_R^{'} \subseteq N_R$ \cite[Definition 1.1]{Annin2004}; an ideal $P$ of $R$ is said to be {\em associated} of $M_R$ if $P$ is prime and there exists a prime submodule $N_R \subseteq M_R$ such that $P = {\rm ann}_R(N_R)$. The set of associated prime ideals of $M_R$ is denoted by ${\rm Ass}(M_R)$ \cite[Definition 1.2]{Annin2004}. Following Leroy and Matczuk \cite{LeroyMatczuk2004}, it may happen that ${\rm Ass}(M_R)$ is not empty but ${\rm Ass}(N_R)=\emptyset$, for some non-zero submodule $N_R$ of $M_R$ \cite[p. 2756]{LeroyMatczuk2004}. For this reason, they worked with modules where ${\rm Ass}(N_R)$ is not empty for all non-zero submodule $N_R$ of $M_R$ and introduced the following definition: $M_R$ has {\em enough prime submodules} if any non-zero submodule $N_R$ of $M_R$ contains a prime submodule \cite[Definition 5.1]{LeroyMatczuk2004}. 

The following lemma shows that if $M_R$ has enough prime submodules, then any non-zero submodule of the induced module $M \langle X \rangle_A$ contains a good polynomial. Lemma \ref{PBWLemma5.4} generalizes  \cite[Lemma 5.4]{LeroyMatczuk2004}.

\begin{lemma}\label{PBWLemma5.4}
Let $A$ be a bijective skew PBW extension over $R$ and $M_R$ be a right module. If $M_R$ has enough prime submodules, then any non-zero submodule $N_A$ of $M \langle X \rangle_A$ contains a good polynomial $m$, with leading coefficient $m_k \neq 0$, such that $m_kR$ is a prime submodule of $M_R$.  
\end{lemma}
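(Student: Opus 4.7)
The plan is to extract a nonzero good polynomial from $N_A$ and then, using the ``enough prime submodules'' hypothesis, modify it so that its leading coefficient generates a prime submodule of $M_R$. The bijectivity of $A$ will provide the freedom needed for this modification.

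First, I would pick a nonzero element of $N_A$ of minimal leading monomial (in the spirit of the proof of Lemma \ref{PBWLemma4.1} (2)), or equivalently invoke Lemma \ref{PBWCorollary3.5} (1), to obtain a nonzero good polynomial $m = m_1 x^{\alpha_1} + \cdots + m_k x^{\alpha_k} \in N_A$ with leading coefficient $m_k \neq 0$. The next step is to apply the hypothesis to the nonzero submodule $m_k R \subseteq M_R$, producing a prime submodule $P \subseteq m_k R$. Fix $0 \neq p \in P$, write $p = m_k r$ for some $r \in R$, and, using that $A$ is bijective so $\sigma^{\alpha_k}$ is an automorphism of $R$, set $s := \sigma^{-\alpha_k}(r) \in R$, so that $m_k \sigma^{\alpha_k}(s) = m_k r = p$.

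I would then consider the element $ms \in N_A$ (which belongs to $N_A$ since $s \in R \subseteq A$). Its leading coefficient is $m_k \sigma^{\alpha_k}(s) = p \neq 0$, so $ms \neq 0$ and, by goodness of $m$, ${\rm lm}(ms) = {\rm lm}(m)$. To see that $ms$ is itself good, let $r' \in R$ with $(ms)r' = m(sr') \neq 0$; goodness of $m$ yields ${\rm lm}(m(sr')) = {\rm lm}(m) = {\rm lm}(ms)$, as required. For primeness of $pR$: since $p \in P$ we have $pR \subseteq P$, and every nonzero submodule $N' \subseteq pR \subseteq P$ satisfies ${\rm ann}_R(N') = {\rm ann}_R(P) = {\rm ann}_R(pR)$ by primeness of $P$, so $pR$ is prime. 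Hence $ms$ is the desired good polynomial.

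I expect the main obstacle to be guaranteeing three things simultaneously for the modified polynomial $ms$: that it remains inside $N_A$, that it stays good, and that its new leading coefficient lies precisely in the chosen prime submodule $P$. The first is automatic from $R \subseteq A$, the second is a short check from the definition of good polynomial together with the identity $(ms)r' = m(sr')$, and the third is exactly where bijectivity of $A$ is used, since surjectivity of $\sigma^{\alpha_k}$ on $R$ is what lets every element of $m_kR$ be realized as a leading coefficient after right multiplication by an element of $R$.
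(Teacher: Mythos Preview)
Your proof is correct and follows essentially the same approach as the paper's own proof: pick a good polynomial in $N_A$ (via minimal leading monomial or Lemma~\ref{PBWCorollary3.5}~(1)), apply the enough-prime-submodules hypothesis to the leading coefficient, and right-multiply by $\sigma^{-\alpha_k}(r)$ to realize the prime-generating element as the new leading coefficient. You are in fact a bit more careful than the paper, which simply asserts that the prime submodule inside $n_kR$ can be taken cyclic, whereas you explicitly verify that any nonzero $p$ in a prime submodule $P$ generates a prime cyclic submodule $pR$; you also spell out why $ms$ remains good, which the paper leaves implicit.
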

\begin{proof}
Let $N_A$ be a submodule of $M \langle X \rangle_A$ and  $n = n_1x^{\alpha_1}+ \cdots + n_kx^{\alpha_k} \in N_A$ be a non-zero polynomial with ${\rm lc}(n)=n_k\neq 0$ and minimal leading monomial in $nA$, that is, $x^{\alpha_k} \prec {\rm lm}(f)$, for every $f \in nA$. By Lemma $\ref{PBWLemma3.4}$, $n$ is a good polynomial, and since $M_R$ contains enough prime submodules, $n_kR$ contains a non-zero prime submodule $m_kR$ where $m_k=n_kr\neq 0$, for some $r \in R$. Thus, the polynomial $m = n\sigma^{-\alpha_k}(r) \in N_A$ is good with leading coefficient $m_k$ such that $m_kR$ is a prime submodule of $M_R$.
\end{proof}

%\begin{corollary}[{\cite[Lemma 5.4]%{LeroyMatczuk2004}}]
%Suppose that $M_R$ has enough prime submodules. Then any non-zero submodule $B_S$ of $S_S$ contains a good polynomial $g$ with leading coefficient a such that $aR$ is a prime submodule of $M_R$.
%\end{corollary}

Leroy and Matczuk characterized certain right annihilators of generated modules on $R[x;\sigma,\delta]$ where $\delta$ is a $\sigma$-derivation {\em $q$-quantized} of $R$ \cite[Lemma 5.6]{LeroyMatczuk2004}. Goodearl and Letzter \cite{GoodearlLetzter1994} introduced the notion of $q$-quantized derivation in the following way: a $\sigma$-derivation $\delta$ of $R$ is {\it $q$-quantized} if $\delta\sigma = q\sigma \delta$ where $q$ is a central, invertible element of $R$ such that $\sigma(q) = q$ and $\delta(q) = 0$. The ring $R[x;\sigma, \delta]$ is called a {\it $q$-skew polynomial ring} if $\delta$ is $q$-quantized \cite[p. 10]{GoodearlLetzter1994}. We consider the following definition for a finite family of endomorphims $\Sigma$ and a family of $\Sigma$-derivations $\Delta$ of $R$.

\begin{definition}
Let $R$ be a ring and $\Sigma$ be a finite family of endomorphisms of $R$. A family of $\Sigma$-derivations $\Delta$ of $R$ is called {\it quantized} if there exists $(q_1, \ldots, q_n)\in R^n$ such that $\delta_i \sigma_i= q_{i} \sigma_i \delta_i$, $\sigma_i(q_{j})=q_{j}$ and $\delta_{i}(q_{j})=0$, for every $0 \le i,j \le n$ where $q_i$ is a central and invertible element of $R$ for every $1\le i \le n$. 
\end{definition}

We say that a skew PBW extension $A$ over a ring $R$ is {\em quantized} if the family of $\Sigma$-derivations $\Delta$ defined in Proposition \ref{sigmadefinition} is quantized.

\begin{example}\label{ExampleQuantized} We present some examples of quantized skew PBW extensions.
\begin{enumerate}
    \item {\em The algebra of $q$-differential operators $D_{q,h}[x, y]$:} Let $q, h \in \Bbbk$, $q \neq 0$. We consider $\Bbbk[y][x; \sigma, \delta]$ with $\sigma(y) := qy$ and $\delta(y) := h$. By definition of skew polynomial ring, $xy = \sigma(y)x + \delta(y) = qyx + h$, and so $xy - qyx = h$. We can prove that $D_{q,h}[x, y] \cong \sigma(\Bbbk[y])\left \langle x \right \rangle$. It is not difficult to verify that $\delta \sigma = \sigma \delta$, whence $D_{q,h}[x, y]$ is a quantized skew PBW extension over $\Bbbk[y]$.
    \item {\em Additive analogue of the Weyl algebra}: Let $\Bbbk$ be a field and $A_n(q_1,\dots, q_n)$ be the $\Bbbk$-algebra generated by $x_1, \dots, x_n, t_1, \dots, t_n$ and subject to the relations:
\begin{align*}
    x_jx_i &= x_ix_j, \ \ \  t_jt_i = t_it_j, \ \ \ 1 \leq i, j \leq n. \\
x_jt_i &= t_ix_j, \ \ \ i\neq j. \\
x_it_i &= q_it_ix_i + 1, \ \ \ 1 \leq i \leq n.
\end{align*}
where $q_i \in \Bbbk \setminus \left \{0\right \}$. Thus, $A_n(q_1,\dots, q_n) \cong \sigma (\Bbbk[t_1,\dots ,t_n])\left \langle x_1,\dots, x_n \right \rangle$. Notice that $\sigma_i(t_i)=qt_i$ and $\delta_i(t_i)=1$, for all $1 \le i \le n$. Some simple computations show that $\delta_i\sigma_i =  \sigma_i\delta_i$, and so $A_n(q_1,\dots, q_n)$ is a quantized skew PBW extension over $\Bbbk[t_1,\dots ,t_n]$. 
\end{enumerate}
\end{example}

The following lemma characterizes the annihilators of generated modules by good polynomials and generalizes \cite[Lemma 5.6]{LeroyMatczuk2004}.

\begin{lemma}\label{PBWLemma5.6}
Let $A$ be a bijective skew PBW extension over $R$, $M_R$ be a right module, and $m= m_1x^{\alpha_1} + \cdots + m_kx^{\alpha_k}$ be a good polynomial of $M \langle X \rangle_A$ with leading coefficient $m_k\neq 0$. If $P:= {\rm ann}_R(m_kR)$, then:
\begin{itemize}
    \item [\rm (1)] If $P$ is $(\Sigma,\Sigma^{-1}, \Delta)$-stable, then ${\rm ann}_A(mA) = P\langle X \rangle$.
    \item [\rm (2)] If $A$ is quantized and $P$ is $(\Sigma,\Sigma^{-1})$-stable, then ${\rm ann}_A(mA) = P_{\Delta}\langle X \rangle$
    \item[\rm (3)] If $A$ is quantized, $m_kR$ is prime, and $mA$ contains good polynomial of any monomial greater than $x^{\alpha_k}$, then ${\rm ann}_A(mA)= \sigma^{-\alpha_k}(P_{\Sigma,\Delta})\langle X \rangle$.
\end{itemize}
\end{lemma}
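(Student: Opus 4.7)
For part (1), since $P$ is $(\Sigma,\Sigma^{-1},\Delta)$-stable, in particular $\sigma_i(P)=P$ for every $i$, so $I:=\sigma^{-\alpha_k}(P)=P$ and therefore $I_{\Sigma}=P$. The hypothesis that $P$ is $\Delta$-invariant says precisely that $I_{\Sigma}$ is $\Delta$-invariant, so Lemma \ref{PBWLemma3.7} (4) applies directly and yields ${\rm ann}_A(mA)=I_{\Sigma}A=PA=P\langle X\rangle$.

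For part (2), the $(\Sigma,\Sigma^{-1})$-stability of $P$ again gives $I=P$ and $I_{\Sigma}=P$, while the definition of $I_{\Sigma,\Delta}$ simplifies to $P_{\Delta}$ (the extra $\sigma^{\alpha}$-condition is trivial when $P$ is $\Sigma$-invariant). By Lemma \ref{PBWLemma3.7} (3), ${\rm ann}_R(mA)=P_{\Delta}$, and the inclusion $P_{\Delta}\langle X\rangle\subseteq{\rm ann}_A(mA)$ is immediate. For the reverse inclusion, I would first prove that under the quantized hypothesis $P_{\Delta}$ is $(\Sigma,\Sigma^{-1})$-stable: given $a\in P_{\Delta}$ and $\sigma_i\in\Sigma$, iterating the relations $\delta_i\sigma_i=q_i\sigma_i\delta_i$, $\sigma_i(q_j)=q_j$, and $\delta_i(q_j)=0$ produces $\delta^{\beta}(\sigma_i(a))=c\cdot\sigma_i(\delta^{\beta}(a))$ for a central unit $c\in R$; hence $\delta^{\beta}(\sigma_i(a))\in\sigma_i(P)=P$ and $\sigma_i(a)\in P_{\Delta}$. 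Since $P_{\Delta}$ is automatically $\Delta$-invariant by its definition, it is $(\Sigma,\Sigma^{-1},\Delta)$-stable. An inductive argument on the leading monomial, analogous to the one used in Lemma \ref{PBWLemma3.7} (4), then identifies ${\rm ann}_A(mA)$ with $P_{\Delta}A$: given $f=\sum a_jx^{\beta_j}\in{\rm ann}_A(mA)$, Lemma \ref{PBWLemma3.7} (2) places each $a_j$ in $P$; testing $mx^{\gamma}f=0$ for all $\gamma$ and expanding $x^{\gamma}a_t$ via Proposition \ref{coefficientes} together with the quantized condition forces $\delta^{\beta}(a_t)\in P$ for every $\beta$, i.e., $a_t\in P_{\Delta}$; subtracting $a_tx^{\beta_t}$ and iterating closes the inclusion.

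For part (3), the strategy mirrors (2) but with $P_{\Sigma,\Delta}$ in place of $P_{\Delta}$, since $P$ is no longer assumed $\Sigma$-stable. The quantized condition lets us show by the same commutation argument that $P_{\Sigma,\Delta}$ is $(\Sigma,\Sigma^{-1},\Delta)$-stable, and then $\sigma^{-\alpha_k}(P_{\Sigma,\Delta})\langle X\rangle\subseteq{\rm ann}_A(mA)$ follows because $\sigma^{-\alpha_k}(P_{\Sigma,\Delta})\subseteq I_{\Sigma,\Delta}={\rm ann}_R(mA)$ (Lemma \ref{PBWLemma3.7} (3)). The reverse inclusion is where the new hypothesis enters in an essential way: for $f\in{\rm ann}_A(mA)$, every good polynomial $h=mg\in mA$ satisfies $hf=(mg)f=0$, and as $h$ ranges over the good polynomials of every monomial $x^{\alpha}\succeq x^{\alpha_k}$ provided by hypothesis, the primeness of $m_kR$ transfers the annihilation conditions on the leading coefficients of the $h$'s to coefficient-by-coefficient conditions on $f$. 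Combining these with the identity ${\rm ann}_R(mA)=I_{\Sigma,\Delta}$ forces the coefficients of $f$ into $\sigma^{-\alpha_k}(P_{\Sigma,\Delta})$, and an induction on leading monomials, as in (2), finishes the argument.

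The main obstacle is the combinatorial bookkeeping of the commutations between $\sigma^{\alpha}$ and $\delta^{\beta}$ in a skew PBW extension. In the skew polynomial setting of Leroy and Matczuk there is only one $\sigma$ and one $\delta$, so the $q$-skew identity $\delta\sigma=q\sigma\delta$ iterates cleanly into $\delta^{j}\sigma^{i}=q^{ij}\sigma^{i}\delta^{j}$; in a skew PBW extension, by contrast, the endomorphisms $\sigma_i$ (and the derivations $\delta_j$) need not commute among themselves, and even the compositions $\sigma^{\alpha+\beta}$ and $\sigma^{\alpha}\circ\sigma^{\beta}$ need not coincide. The conditions $\sigma_i(q_j)=q_j$ and $\delta_i(q_j)=0$ built into the quantized definition are precisely what keep the central scalars produced by each commutation step fixed under further applications of $\sigma$'s and $\delta$'s, and most of the work in parts (2) and (3) consists of carrying these scalars through the coefficient computations via Proposition \ref{coefficientes} carefully enough to isolate the desired invariance conditions on the $a_j$.
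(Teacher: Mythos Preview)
Your overall plan coincides with the paper's: part (1) is exactly the paper's argument via Lemma \ref{PBWLemma3.7}(4), and in parts (2)--(3) both you and the paper use the quantized relations to control commutations and test $mRx^{\theta}f=0$ (respectively $Ex^{\theta}f=0$, where $E$ collects the good polynomials of all higher monomials and primeness pins ${\rm ann}_R$ of each leading coefficient to $P$).

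There is, however, a structural slip in how you organize the induction in (2) and (3). You propose to induct on the leading monomial of $f$, first proving $a_t\in P_{\Delta}$ and then subtracting $a_tx^{\beta_t}$ as in Lemma \ref{PBWLemma3.7}(4). That does not work directly: in $mRx^{\theta}f$ the constant-in-$x^{\theta}$ contribution $\delta^{\theta}(a_t)$ is entangled with the analogous contributions $\delta^{\theta'}(a_j)$ from the other coefficients, so you cannot isolate $a_t$ without simultaneously controlling all $a_j$. The paper instead inducts on the test monomial $\theta$, proving $\delta^{\theta}(a_j)\in P$ (resp.\ $\in\sigma^{-\alpha_k}(P_{\Sigma})$) for \emph{every} $j$ at once; the inductive hypothesis kills all non-constant terms of $x^{\theta}a_j$, reducing $mRx^{\theta}f$ to $mR\sum_j\delta^{\theta}(a_j)x^{\beta_j}$, which lies in ${\rm ann}_A(mR)=P\langle X\rangle$ and hence forces each $\delta^{\theta}(a_j)\in P$. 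With that reorganization your sketch becomes the paper's proof. One side remark to correct: the claim that ``$P_{\Delta}$ is automatically $\Delta$-invariant by its definition'' is not justified, since $\delta^{\beta}\circ\delta_i$ need not equal $\delta^{\beta+e_i}$ when the $\delta_j$'s do not commute; fortunately this $\Delta$-invariance is not needed---the paper only uses that $P_{\Delta}$ is $\Sigma$-invariant (which does follow from the quantized identities and the $(\Sigma,\Sigma^{-1})$-stability of $P$) to conclude $P_{\Delta}\langle X\rangle\subseteq{\rm ann}_A(mA)$.
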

\begin{proof}
\begin{itemize}
    \item[\rm (1)] Since $P$ is $\Delta$-stable, we have ${\rm ann}_A(mA) = P_{\Sigma}\langle X \rangle$ by Lemma \ref{PBWLemma3.7} (4). Since $(\Sigma,\Sigma^{-1})$-stable, we have $P_{\Sigma}=P$, and so ${\rm ann}_A(mA) = P\langle X \rangle$. 
    
    \item[\rm (2)] If $r \in P_{\Delta}$, then $\delta^{\beta}(r) \in P$, and so  $\sigma^{\alpha}(\delta^{\beta}(r)) \in P$ for every $\alpha,\beta \in \mathbb{N}^n$ by the $(\Sigma,\Sigma^{-1})$-stability of $P$. If $A$ is quantized, $\delta^{\beta}(\sigma^{\alpha}(r))=r_{\alpha,\beta}\sigma^{\alpha}(\delta^\beta(r)) \in P$ for some $r_{\alpha,\beta} \in R$ whence $P_{\Delta}$ is a $\Sigma$-invariant ideal of $R$. Hence, $P_{\Delta}\langle X \rangle$ is a two-sided ideal of $A$ and so $P_{\Delta}\langle X \rangle \subseteq {\rm ann}_A(mA)$. Let $f= b_1x^{\beta_1} + \cdots + b_tx^{\beta_t}$ be an element of ${\rm ann}_A(mA)$. By induction on the monomials, we show that $\delta^{\theta}(b_i) \in P$, for any $\theta \in \mathbb{N}^n$ and $1 \le i \le t$. Since $m$ is a good polynomial and $P$ is a $(\Sigma, \Sigma^{-1})$-stable ideal of $R$, ${\rm ann}_A(mR)=\sigma^{-\alpha_k}(P)\langle X \rangle=P\langle X \rangle$ by Lemma \ref{PBWLemma3.4}, and thus $b_i \in P$ for every $1 \le i \le t$. Assume that for any leading monomial $x^{\gamma}$ with $x^ {\gamma}\prec x^{\theta}$, we have $\delta^{\gamma}(b_i)\in P$. If $f \in {\rm ann}_A(mA)$ and ${\rm ann}_A(mR)=P\langle X \rangle_A$, then $mrx^{\theta}f=0$. Since $A$ is quantized, it follows that $x^{\theta}b_i= r_{1}x^{\theta_{1}} + \cdots + r_sx^{\theta_{s}}$ where each $r_j$ is a finite sum of several evaluations of $\sigma^{\theta_{j}}$'s and $\delta^{\theta - \theta_{j}}$'s in the element $b_i$, for every $1\le j \le s$. Thus, if $P$ is a $(\Sigma, \Sigma^{-1})$-stable ideal and $\delta^{\gamma}(b_i) \in P$ for any $\gamma\in \mathbb{N}^n$ with $x^ {\gamma}\prec x^{\theta}$, then $mRr_ix^{\theta_j}=0$ for all $1 \le j \le s$ where $\theta_j \neq 0$. In this way, we have $mrx^{\theta}b_i = mr\delta^{\theta}(b_i)$, and hence $mrx^{\theta}f= mr(\delta^{\theta}(b_1)x^{\beta_1} + \cdots + \delta^{\theta}(b_t)x^{\beta_t})$ which shows that $\delta^{\theta}(b_i) \in P$ for all $1 \le i \le t$, and therefore $f \in P_{\Delta}\langle X\rangle$.
    
    \item[\rm (3)] Let $f=b_1x^{\beta_1} + \cdots + b_tx^{\beta_t} \in {\rm ann}_A(mA)$. We prove by induction on the leading monomials that $\delta^{\theta}(b_i)\in \sigma^{-\alpha_k}(P_{\Sigma})$, for all $\theta \in \mathbb{N}^n$ and $1 \le i \le t$. Since $mA$ contains good polynomials of any monomial greater than $x^{\alpha_k}$, there exists $f_{\gamma_i} \in mA$ with leading monomial $x^{\gamma_i}$, for each $x^{\gamma_i}\succeq x^{\alpha_k}$. Let $m_{\gamma_i}$ be the leading coefficient of $f_{\gamma_i}$ where $m_{\gamma_i} \in m_kR$. If $m_kR$ is prime, then $m_{\gamma_i}R$ is prime, and thus ${\rm ann}(m_{\gamma_i}R)= P$ for all $\gamma_i$. Since the $f_{\gamma_i}$ are good polynomials, ${\rm ann}_A (f_{\gamma_i}R)=\sigma^{-\gamma_i}(P)\langle X \rangle$ by Lemma \ref{PBWLemma3.4}. If $E$ denotes the submodule of $M \langle X \rangle _R$ defined by $E= \sum_{\gamma_i \ge \alpha_k} f_{\gamma_i}R$, then $E \subseteq mA$ and ${\rm ann}_A (mA)\subseteq {\rm ann}_A (E)=\bigcap_{\gamma_i \ge \alpha_k} \sigma^{-\gamma_i}(P)\langle X \rangle = \sigma^{-\alpha_k}(P_{\Sigma})\langle X \rangle$. In this way, if $f \in {\rm ann}_A(mA)$, then $f \in \sigma^{-\alpha_k}(P_{\Sigma})\langle X \rangle$ and thus $b_i \in \sigma^{-\alpha_k}(P_{\Sigma})$. 
    
    Now, assume that for any leading monomial $x^{\gamma}$ with $x^ {\gamma}\prec x^{\theta}$, we have $\delta^{\gamma}(b_i)\in \sigma^{-\alpha_k}(P_{\Sigma})$. If $Ex^{\theta}\subseteq mA$ and $f=b_1x^{\beta_1} + \cdots + b_tx^{\beta_t} \in {\rm ann}_A(mA)$, then $Ex^{\theta}f = 0$. In addition if $A$ is quantized, then $x^{\theta}b_i= r_{1}x^{\theta_{1}} + \cdots + r_sx^{\theta_{s}}$ where each $r_j$ is a finite sum of several evaluations of $\sigma^{\theta_{j}}$'s and $\delta^{\theta - \theta_{j}}$'s in the element $b_i$, for every $1\le j \le s$. If $\delta^{\gamma}(b_i)\in \sigma^{-\alpha_k}(P_{\Sigma})$ for any $\gamma\in \mathbb{N}^n$ with $x^ {\gamma}\prec x^{\theta}$, then $Er_ix^{\theta_j}=0$ for all $1 \le j \le s$ where $\theta_j \neq 0$. Thus $Ex^{\theta}b_i = E\delta^{\theta}(b_i)$, and hence $Ex^{\theta}f= E(\delta^{\theta}(b_1)x^{\beta_1} + \cdots + \delta^{\theta}(b_t)x^{\beta_t})$ whence $\delta^{\theta}(b_i) \in \sigma^{-\alpha_k}(P_{\Sigma})$ for all $1 \le i \le t$. So ${\rm ann}_A(mA) \subseteq (\sigma^{-\alpha_k}(P_{\Sigma}))_{\Delta}A$, and since $A$ is quantized, it follows that $(\sigma^{-\alpha_k}(P_{\Sigma}))_{\Delta} = \sigma^{-\alpha_k}(P_{\Sigma, \Delta})$, and hence ${\rm ann}_A(mA) \subseteq \sigma^{-\alpha_k}(P_{\Sigma, \Delta})$.
   
    To prove the other inclusion, if $r \in \sigma^{-\alpha_k}(P_{\Sigma, \Delta})$, then $\sigma^{\alpha_k}(r) \in P_{\Sigma, \Delta}$, and thus $\sigma^{\alpha_k}(\sigma^{\alpha}\delta^{\beta}(r))$ for all $\alpha,\beta \in \mathbb{N}$. Since $m$ is a good polynomial of leading monomial $x^{\alpha_k}$ and leading coefficient $m_k \neq 0$, then $mR\sigma^{\alpha}\delta^{\beta}(r)=0$. This implies that $mRx^{\gamma}r=0$, for any $\gamma \in \mathbb{N}^n$, and so $\sigma^{-\alpha_k}(P_{\Sigma, \Delta}) \subseteq {\rm ann}_A(mA)$. 
\end{itemize}
\end{proof}

%\begin{corollary}[{\cite[Lemma 5.6]{LeroyMatczuk2004}}]
%Suppose that $f \in \widehat{M}_S$ is a good polynomial of degree $n$ and leading coefficient $a$. Set $P = {\rm ann}_R(aR)$. Then:
%\begin{itemize}
 %   \item [(1)] If $\sigma(P) = P$ and $\delta(P) \subseteq P$, then ${\rm ann}_S(fS)= PS$.
 %   \item[(2)] If $\delta$ is a $q$-quantized $\sigma$-derivation and $\sigma(P) = P$, then ${\rm ann}_S(fS)=P_{\delta}S$.
 %   \item[(3)] If $\delta$ is $q$-quantized, $aR$ is a prime submodule of $M_R$ and $fS$ contains good polynomial of any degree greater than $n$, then ${\rm ann}_S(fS)= \sigma^{-n}(P_{\sigma, \delta})S$.
%\end{itemize}
%\end{corollary}

The following theorem characterizes the associated prime ideals of $M\langle X \rangle_A$ where $M_R$ is a right module that contains enough prime submodules. 

\begin{theorem}\label{PBWTheorem5.7}
Let $A$ be a bijective skew PBW extension over $R$, $M_R$ be a right module, and $Q \in {\rm Ass}(M\langle X \rangle_A)$. If $M_R$ contains enough prime submodule, then:
\begin{itemize}
    \item [\rm (1)] If $P$ is $(\Sigma,\Sigma^{-1},\Delta)$-stable for every $P \in {\rm Ass}(M_R)$, then $Q = P\langle X \rangle$ for some $P \in {\rm Ass}(M_R)$.
    
    \item[\rm (2)] If $A$ is quantized and $P$ is $(\Sigma,\Sigma^{-1})$-stable for every $P \in {\rm Ass}(M_R)$, then $Q=P_{\Delta}\langle X \rangle$ for some $P \in {\rm Ass}(M_R)$.
    
    \item[\rm (3)] If $A$ is quantized and the module $M\left \langle X \right \rangle_A$ is good, then $Q = P_{\Sigma,\Delta}\langle X \rangle$ for some $P \in {\rm Ass}(M_R)$ and $P_{\Sigma,\Delta}$ is $(\Sigma,\Sigma^{-1})$-stable.
\end{itemize}
\end{theorem}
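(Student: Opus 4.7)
The plan is to use Lemma \ref{PBWLemma5.4} to reduce the problem to good polynomials and then read off the annihilator using Lemma \ref{PBWLemma5.6}. Fix $Q \in {\rm Ass}(M\langle X\rangle_A)$; by definition there is a prime submodule $N_A \subseteq M\langle X\rangle_A$ with ${\rm ann}_A(N_A) = Q$. Since $M_R$ contains enough prime submodules, Lemma \ref{PBWLemma5.4} yields a good polynomial $m = m_1 x^{\alpha_1} + \cdots + m_k x^{\alpha_k} \in N_A$ whose leading coefficient $m_k \neq 0$ generates a prime submodule $m_k R$ of $M_R$. In particular, $P := {\rm ann}_R(m_k R)$ lies in ${\rm Ass}(M_R)$. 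The primeness of $N_A$ forces every non-zero submodule of $N_A$ to share the same $A$-annihilator, so $Q = {\rm ann}_A(N_A) = {\rm ann}_A(mA)$.

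With $m$ and $P$ in hand, each of (1), (2), (3) becomes a direct application of the corresponding item of Lemma \ref{PBWLemma5.6} to $mA$. For (1), the $(\Sigma,\Sigma^{-1},\Delta)$-stability of $P$ together with Lemma \ref{PBWLemma5.6}(1) give $Q = P\langle X \rangle$. For (2), quantization of $A$ combined with $(\Sigma,\Sigma^{-1})$-stability of $P$ and Lemma \ref{PBWLemma5.6}(2) give $Q = P_\Delta \langle X \rangle$. For (3), goodness of $M\langle X\rangle_A$ ensures that $mA$ contains a good polynomial whose leading monomial is $x^\gamma$ for every $x^\gamma \succeq x^{\alpha_k}$, so the hypotheses of Lemma \ref{PBWLemma5.6}(3) are satisfied and we obtain $Q = \sigma^{-\alpha_k}(P_{\Sigma,\Delta})\langle X \rangle$.

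The remaining step—and the one I expect to require the most care—is to show in case (3) that $P_{\Sigma,\Delta}$ is $(\Sigma,\Sigma^{-1})$-stable, which will collapse $\sigma^{-\alpha_k}(P_{\Sigma,\Delta})$ to $P_{\Sigma,\Delta}$ and thus deliver the claim $Q = P_{\Sigma,\Delta}\langle X \rangle$ together with the asserted stability. Concretely, for $a \in P_{\Sigma,\Delta}$ and each $1 \le i \le n$, one must verify that $\sigma^\gamma \delta^\beta(\sigma_i(a)) \in P$ for all $\gamma, \beta \in \mathbb{N}^n$. The quantized hypothesis permits the commutation of each $\delta_j$ with $\sigma_i$ up to a central, invertible scalar; this is essentially the same input that underlies the identity $(\sigma^{-\alpha_k}(P_\Sigma))_\Delta = \sigma^{-\alpha_k}(P_{\Sigma,\Delta})$ used inside the proof of Lemma \ref{PBWLemma5.6}(3). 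Iterating this commutation yields $\delta^\beta \sigma_i = u\, \sigma_i \delta^\beta$ for some central unit $u$, and bijectivity of each $\sigma_j$ then reduces the required membership to $\sigma^{\gamma+e_i} \delta^\beta(a) \in P$, which holds by definition of $P_{\Sigma,\Delta}$. That commutation argument is the only genuinely technical ingredient; the rest is bookkeeping.
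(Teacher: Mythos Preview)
Your treatment of parts (1) and (2) matches the paper's proof exactly: pick a prime submodule $N_A$ with annihilator $Q$, use Lemma~\ref{PBWLemma5.4} to find a good polynomial $m$ with $m_kR$ prime, use primeness to get $Q={\rm ann}_A(mA)$, and read off $Q$ from Lemma~\ref{PBWLemma5.6}. The divergence, and the gap, is in part (3).

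Your direct commutation argument can at best show $\sigma_i(P_{\Sigma,\Delta})\subseteq P_{\Sigma,\Delta}$, i.e.\ $\Sigma$-invariance. It does not yield $\sigma_i^{-1}(P_{\Sigma,\Delta})\subseteq P_{\Sigma,\Delta}$: after commuting $\delta^\beta$ past $\sigma_i^{-1}$ you are left needing $\sigma_i^{-1}(\sigma^{\gamma}\delta^{\beta}(a))\in P$ when $\gamma_i=0$, and nothing in the hypotheses forces $\sigma_i^{-1}(P)\subseteq P$. In fact $(\Sigma,\Sigma^{-1})$-stability of $P_{\Sigma,\Delta}$ is \emph{not} a formal consequence of $A$ being quantized; already in the one-variable, derivation-free case one can choose $P$ with $P_\sigma=P$ and $\sigma^{-1}(P)\not\subseteq P$. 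There is also a secondary issue: the quantized hypothesis only provides $\delta_i\sigma_i=q_i\sigma_i\delta_i$ for matching indices, so the asserted identity $\delta^\beta\sigma_i=u\,\sigma_i\delta^\beta$ is unjustified when $\beta$ has a nonzero component at some $j\neq i$.

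The paper obtains stability from the primeness of $N_A$, not from algebra on $P$. Since $M\langle X\rangle_A$ is good, for every $x^{\beta}\succeq x^{\alpha_k}$ there is a good polynomial $m'\in mA$ with leading monomial $x^{\beta}$; its leading coefficient lies in $m_kR$, so its annihilator is again $P$, and Lemma~\ref{PBWLemma5.6}(3) applied to $m'$ gives ${\rm ann}_A(m'A)=\sigma^{-\beta}(P_{\Sigma,\Delta})\langle X\rangle$. But $m'A\subseteq mA\subseteq N_A$ with $N_A$ prime, so ${\rm ann}_A(m'A)=Q$ for every such $\beta$. Hence $\sigma^{-\beta}(P_{\Sigma,\Delta})=\sigma^{-\alpha_k}(P_{\Sigma,\Delta})$ for all $\beta\succeq\alpha_k$, which forces $\sigma_i(P_{\Sigma,\Delta})=P_{\Sigma,\Delta}$ for each $i$ and therefore $Q=P_{\Sigma,\Delta}\langle X\rangle$. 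The stability conclusion is thus a genuine consequence of $Q\in{\rm Ass}(M\langle X\rangle_A)$, not an intrinsic property of $P_{\Sigma,\Delta}$.
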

\begin{proof}
\begin{itemize}
    \item[\rm (1)] Let $N_A$ be a prime submodule of $M\langle X \rangle_A$ such that ${\rm ann}_A(N) = Q$. By Lemma \ref{PBWLemma5.4}, there is a good polynomial $m=m_1x^{\alpha_1}+ \cdots+ m_kx^{\alpha_k} \in N_A$ with leading coefficient $m_k \neq 0$ such that $m_kR$ is prime submodule of $M_R$. Since $N_A$ is prime, then $mA$ is also prime and ${\rm ann}_A(mA) = {\rm ann}_A(N) = Q$. By Lemma \ref{PBWLemma5.6} (1), we have $Q = P\langle X \rangle$ with $P={\rm ann}_R(m_kR) \in {\rm Ass}(M_R)$.
    
    \item[(2)] In the same way, if $N_A$ is a prime submodule of $M\langle X \rangle_A$ with ${\rm ann}_A(N) = Q$, there exists a good polynomial $m=m_1x^{\alpha_1}+ \cdots+ m_kx^{\alpha_k} \in N_A$ with leading coefficient $m_k \neq 0$ such that $m_kR$ is prime submodule of $M_R$ by Lemma \ref{PBWLemma5.4}. Since $N_A$ is prime, then $mA$ is also prime and ${\rm ann}_A(mA) = {\rm ann}_A(N) = Q$. By Lemma \ref{PBWLemma5.6} (2), $Q = P_{\Delta}\langle X \rangle$ where $P={\rm ann}_R(m_kR) \in {\rm Ass}(M_R)$.
    
    \item[(3)] If $N_A$ is a prime submodule of $M\langle X \rangle_A$ such that ${\rm ann}_A(N) = Q$, there exists a good polynomial $m=m_1x^{\alpha_1}+ \cdots+ m_kx^{\alpha_k} \in N_A$ with leading coefficient $m_k \neq 0$ by Lemma \ref{PBWLemma5.4}. If $M\langle X \rangle_A$ is good, $mA$ contains a good polynomial $m'$ with leading monomial $x^{\beta}$ such that $x^{\alpha_k} \preceq x^{\beta}$. Additionally, $mA$ is prime module which implies that ${\rm ann}_A(mA) = Q$, and by Lemma \ref{PBWLemma5.6} (3), ${\rm ann}_A(m'A) = \sigma^{-\beta}(P_{\Sigma,\Delta})\langle X \rangle$ and ${\rm ann}_A(mA) = \sigma^{-\alpha_k}(P_{\Sigma,\Delta})\langle X \rangle$. Thus, $P_{\Sigma,\Delta}$ is $(\Sigma,\Sigma^{-1})$-stable, and hence $Q = {\rm ann}_A(mA) = P_{\Sigma,\Delta}\langle X \rangle$.
\end{itemize}
\end{proof}

\begin{corollary}[{\cite[Theorem 5.7]{LeroyMatczuk2004}}]\label{Theorem5.7Leroy} Let $S:= R[x;\sigma,\delta]$ and $M_R$ be a right module. If $M_R$ contains enough prime submodule and  $Q \in {\rm Ass}(\widehat{M}_S)$, then:
\begin{itemize}
    \item [\rm (1)] If for every $P \in {\rm Ass}(M_R)$, $\sigma(P) = P$ and $\delta(P) \subseteq P$, then $Q = PS$ for some $P \in {\rm Ass}(M_R)$.
    \item[\rm (2)] If $\delta$ is $q$-quantized and $\sigma(P) = P$ for all $P \in {\rm Ass}(M_R)$, then $Q=P_{\delta}S$ for some $P \in {\rm Ass}(M_R)$.
    \item[\rm (3)] If $\delta$ is $q$-quantized and $\widehat{M}_S$ is a good module, then $Q = P_{\sigma, \delta}S$ for some $P \in {\rm Ass}(M_R)$ and $\sigma(P_{\sigma, \delta}) = P_{\sigma,\delta}$.
\end{itemize}
\end{corollary}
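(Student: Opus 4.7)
The plan is to derive this corollary as the one-variable specialization of Theorem \ref{PBWTheorem5.7}, so no new argument is required beyond a dictionary between the two settings. First I would observe that every skew polynomial ring $S=R[x;\sigma,\delta]$ with $\sigma$ an automorphism and $\delta$ a $\sigma$-derivation is a bijective skew PBW extension over $R$ with a single generator, i.e., $S=\sigma(R)\langle x\rangle$ in the notation of Definition \ref{def.skewpbwextensions}. Under this identification, the induced module $\widehat{M}_S=M\otimes_R S$ coincides with $M\langle X\rangle_A$, the monomials $x^{\alpha}$ become the powers $x^{n}$, the multi-indices reduce to nonnegative integers, and the composed endomorphism $\sigma^{\alpha}$ and derivation $\delta^{\beta}$ reduce to the ordinary iterates $\sigma^{n}$ and $\delta^{m}$. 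Thus the hypothesis ``$M_R$ contains enough prime submodules'' and the conclusion ``$Q\in{\rm Ass}(\widehat{M}_S)$'' translate verbatim to the hypotheses of Theorem \ref{PBWTheorem5.7}.

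Next I would translate the three structural conditions. With $\Sigma=\{\sigma\}$ and $\Delta=\{\delta\}$, the $(\Sigma,\Sigma^{-1},\Delta)$-stability of an ideal $P$ is, by definition, the conjunction $\sigma(P)=P$ and $\delta(P)\subseteq P$; the $(\Sigma,\Sigma^{-1})$-stability is just $\sigma(P)=P$. The notion of a \emph{quantized} skew PBW extension, applied to $n=1$, demands a central invertible $q\in R$ with $\delta\sigma=q\sigma\delta$, $\sigma(q)=q$ and $\delta(q)=0$, which is precisely the definition of a $q$-quantized $\sigma$-derivation of Goodearl and Letzter recalled in the excerpt, so $S$ is a quantized skew PBW extension if and only if $\delta$ is $q$-quantized. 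Finally, in this single-variable setting the invariant ideals $P_{\Sigma}$, $P_{\Delta}$ and $P_{\Sigma,\Delta}$ reduce to $P_{\sigma}$, $P_{\delta}$ and $P_{\sigma,\delta}$, and the expression $P\langle X\rangle$ reduces to $PS$.

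With the dictionary in place, parts (1), (2), (3) of Corollary \ref{Theorem5.7Leroy} become literal restatements of the corresponding parts of Theorem \ref{PBWTheorem5.7}: part (1) follows from Theorem \ref{PBWTheorem5.7}(1) since $(\Sigma,\Sigma^{-1},\Delta)$-stability of every $P\in{\rm Ass}(M_R)$ is exactly the combined condition $\sigma(P)=P$, $\delta(P)\subseteq P$; part (2) follows from Theorem \ref{PBWTheorem5.7}(2) once ``quantized'' is replaced by ``$q$-quantized'' and $P_{\Delta}$ by $P_{\delta}$; and part (3) follows from Theorem \ref{PBWTheorem5.7}(3), where the $(\Sigma,\Sigma^{-1})$-stability of $P_{\Sigma,\Delta}$ asserted there is precisely the equality $\sigma(P_{\sigma,\delta})=P_{\sigma,\delta}$ claimed in the corollary. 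Since there is only one endomorphism, $\Sigma$-stability forces both $\sigma(P_{\sigma,\delta})\subseteq P_{\sigma,\delta}$ and, by bijectivity of $\sigma$, the reverse inclusion as well.

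There is essentially no obstacle here beyond bookkeeping: the substantive work, namely producing a good polynomial with prime leading coefficient (Lemma \ref{PBWLemma5.4}) and computing the two-sided annihilator of the cyclic module it generates (Lemma \ref{PBWLemma5.6}), has already been carried out for the PBW case, and its single-variable instance is what underlies the original argument of Leroy and Matczuk. The only small item worth verifying carefully during the write-up is that the bijectivity of a single-variable skew PBW extension $\sigma(R)\langle x\rangle$ is automatic from $\sigma$ being an automorphism (there is no coefficient $d_{i,j}$ to invert when $n=1$), so the theorem applies without any extra hypothesis on $S$.
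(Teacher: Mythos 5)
Your proposal is correct and matches the paper's intent exactly: the corollary is stated without a separate proof precisely because it is the $n=1$ specialization of Theorem \ref{PBWTheorem5.7}, with the dictionary you describe ($S=\sigma(R)\langle x\rangle$ bijective since $\sigma$ is an automorphism and there is no $d_{i,j}$ to invert, quantized $\leftrightarrow$ $q$-quantized, $(\Sigma,\Sigma^{-1},\Delta)$-stable $\leftrightarrow$ $\sigma(P)=P$ and $\delta(P)\subseteq P$, $P_{\Sigma,\Delta}\leftrightarrow P_{\sigma,\delta}$, $P\langle X\rangle\leftrightarrow PS$). Nothing further is needed.
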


Leroy and Matczuk presented an example where Corollary \ref{Theorem5.7Leroy} fails if the module $M_R$ does not have enough prime submodules. This shows that this hypothesis is not superfluous. If $M$ is a $\Bbbk$-linear space with basis $\{v_i\}_{i\in \mathbb{Z}}$ and $R = \Bbbk\langle X \rangle$ is the free algebra over $\Bbbk$ on the set $X = \{x_i\}_{i\in \mathbb{Z}}$, then $M$ has a module structure over $R$ given by $v_ix_k=v_{i+1}$ if $i \le k$ and $0$ otherwise. Let $\sigma$ be the automorphism of $R$ defined by $\sigma(x_k) = x_{k+1}$ for any $k \in \mathbb{Z}$ and $S := R[t, \sigma]$. They showed that ${\rm Ass}(M_R) = \emptyset$ and that $\widehat{M}_S$ is prime with ${\rm Ass}(\widehat{M}_S) = \{0\}$ \cite[Example 5.8]{LeroyMatczuk2004}.

The following theorem shows when $M\langle X \rangle_A$ is a prime module and characterizes its associated prime ideals in terms of the associated primes of $M_R$. 

\begin{theorem}\label{PBWTheorem5.10}
Let $A$ be a bijective skew PBW extension over $R$ and $M_R$ be a right module. If $M_R$ is a prime module with $P = {\rm ann}_R(M)$, then:
\begin{itemize}
    \item [\rm (1)] If $P$ is $(\Sigma,\Sigma^{-1},\Delta)$-stable, then the induced module $M\langle X \rangle_A$ is prime with the associated prime ideal equal to $Q=P\langle X \rangle$.
    
    \item[\rm (2)] If $A$ is quantized and $P$ is $(\Sigma,\Sigma^{-1})$-stable, then $M\langle X \rangle_A$ is prime with the associated prime ideal equal to $Q=P_{\Delta}\langle X \rangle$.
    
    \item[\rm (3)] If $A$ is quantized and the module $M\langle X \rangle_A$ is good, then $M\langle X \rangle_A$ is a prime module if and only if $P_{\Sigma,\Delta}$ is $(\Sigma,\Sigma^{-1})$-stable. If $M\langle X \rangle_A$ is prime, then its associated prime ideal is equal to $Q=P_{\Sigma,\Delta}\langle X \rangle$.
\end{itemize}
\end{theorem}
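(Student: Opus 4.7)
Since $M_R$ is prime, every non-zero submodule of $M_R$ has annihilator $P$, so $M_R$ trivially contains enough prime submodules, and Lemma~\ref{PBWLemma5.4} applies to any non-zero submodule of $M\langle X\rangle_A$. My overall strategy is, for each part, to show that every non-zero submodule $N \subseteq M\langle X\rangle_A$ satisfies $\mathrm{ann}_A(N) = Q$; this simultaneously proves primeness of $M\langle X\rangle_A$ and identifies its associated prime ideal. The upper bound $\mathrm{ann}_A(N) \subseteq Q$ will come from producing a good polynomial inside $N$ and invoking the appropriate part of Lemma~\ref{PBWLemma5.6}, while the reverse inclusion amounts to showing $Q \subseteq \mathrm{ann}_A(M\langle X\rangle_A)$ directly from the stability hypotheses on $P$.

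For parts (1) and (2), Lemma~\ref{PBWLemma5.4} produces a good polynomial $m = m_1 x^{\alpha_1} + \cdots + m_k x^{\alpha_k} \in N$ with $m_k R$ prime, and primeness of $M_R$ forces $\mathrm{ann}_R(m_k R) = P$. Lemma~\ref{PBWLemma5.6}(1), respectively (2), then gives $\mathrm{ann}_A(mA) = P\langle X\rangle$, respectively $P_{\Delta}\langle X\rangle$. For the reverse inclusion I expand $m' x^{\alpha} \cdot r$ via Proposition~\ref{coefficientes}: every coefficient produced is an iterated image of $r$ under compositions of $\sigma_i$'s and $\delta_i$'s, and the $(\Sigma,\Sigma^{-1},\Delta)$-stability of $P$ in (1), respectively the quantized hypothesis together with $(\Sigma,\Sigma^{-1})$-stability in (2), forces each such image to lie in $P$ and therefore to annihilate $M$. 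The ``if'' direction of part (3) is handled in the same spirit: since $M\langle X\rangle_A$ is good, $mA$ contains good polynomials of every monomial $\succeq x^{\alpha_k}$, so Lemma~\ref{PBWLemma5.6}(3) applies and yields $\mathrm{ann}_A(mA) = \sigma^{-\alpha_k}(P_{\Sigma,\Delta})\langle X\rangle = P_{\Sigma,\Delta}\langle X\rangle$ once $(\Sigma,\Sigma^{-1})$-stability of $P_{\Sigma,\Delta}$ is assumed.

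For the converse direction in part (3), I fix any $0 \neq m_0 \in M$, regarded as a good polynomial of degree zero; by primeness of $M_R$ its annihilator is $P$, and Lemma~\ref{PBWLemma5.6}(3) gives $\mathrm{ann}_A(m_0 A) = P_{\Sigma,\Delta}\langle X\rangle$. Goodness of $M\langle X\rangle_A$ provides, for each $\gamma \in \mathbb{N}^n$, a good polynomial $m_\gamma \in m_0 A$ with leading monomial $x^\gamma$, whose leading coefficient lies in $m_0 R \subseteq M$ and therefore again has annihilator $P$; Lemma~\ref{PBWLemma5.6}(3) yields $\mathrm{ann}_A(m_\gamma A) = \sigma^{-\gamma}(P_{\Sigma,\Delta})\langle X\rangle$. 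Primeness of $M\langle X\rangle_A$ forces $\mathrm{ann}_A(m_\gamma A) = \mathrm{ann}_A(m_0 A)$, so $\sigma^{-\gamma}(P_{\Sigma,\Delta}) = P_{\Sigma,\Delta}$ for every $\gamma \in \mathbb{N}^n$; taking $\gamma = e_i$ and using bijectivity of each $\sigma_i$ upgrades this to both $\sigma_i(P_{\Sigma,\Delta}) = P_{\Sigma,\Delta}$ and $\sigma_i^{-1}(P_{\Sigma,\Delta}) = P_{\Sigma,\Delta}$, i.e. $(\Sigma,\Sigma^{-1})$-stability.

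The delicate step throughout is the inclusion $Q \subseteq \mathrm{ann}_A(M\langle X\rangle_A)$, because in a general skew PBW extension the normal-form expansion of $x^\alpha r$ produces many cross terms whose coefficients are compositions of the form $\sigma^\gamma \delta^\theta(r)$ arising in essentially arbitrary order. The quantized hypothesis in parts (2) and (3) is precisely what lets me commute each $\sigma_i$ past each $\delta_j$ up to a central invertible scalar, so that every such cross-term coefficient can be rewritten as an element of the claimed stable ideal and thereby forced to annihilate $M$; without this, the interplay of $\Sigma$ and $\Delta$ cannot be controlled well enough to conclude that $P_\Delta$ or $P_{\Sigma,\Delta}$ acts trivially on the whole induced module.
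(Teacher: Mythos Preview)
Your proposal is correct and follows essentially the same route as the paper. In each part you locate a good polynomial $m$ in an arbitrary non-zero submodule (via Lemma~\ref{PBWLemma5.4}), invoke the matching clause of Lemma~\ref{PBWLemma5.6} to compute ${\rm ann}_A(mA)$, and then squeeze ${\rm ann}_A(N)$ between ${\rm ann}_A(M\langle X\rangle_A)$ and ${\rm ann}_A(mA)$; this is exactly the chain $Q \subseteq {\rm ann}_A(M\langle X\rangle) \subseteq {\rm ann}_A(N) \subseteq {\rm ann}_A(mA) = Q$ that the paper writes in part~(1) and then reuses verbatim for (2) and (3). Your treatment of the converse in (3)---starting from a degree-zero element $m_0$, producing good polynomials $m_\gamma$ of each leading monomial, and comparing $\sigma^{-\gamma}(P_{\Sigma,\Delta})$ across $\gamma$ via primeness---is more explicit than the paper's one-line appeal to ``the argument of (1) and Lemma~\ref{PBWLemma5.6}(3)'', but it is precisely the mechanism the paper uses in the parallel Theorem~\ref{PBWTheorem5.7}(3), so there is no genuine methodological difference.
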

\begin{proof}
\begin{itemize}
    \item[\rm (1)] Let $N_A$ be a submodule of $M\langle X \rangle_A$ and $m=m_1x^{\alpha_1} + \cdots + m_kx^{\alpha_k}$ be an element of $N_A$ with minimal leading monomial in $mA_A$, i.e., $x^{\alpha_k}\preceq {\rm lm}(f)$ for all $f \in mA_A$. By Lemma \ref{PBWLemma3.4}, $m$ is a good polynomial and since $M_R$ is prime, then $m_kR$ is a submodule prime of $M_R$ with $P= {\rm ann}_R(m_kR)$ by Lemma \ref{PBWLemma5.4}. Additionally, by Lemma \ref{PBWLemma5.6} (1), we have $P\langle X \rangle={\rm ann}_A (mA)$, and thus $P\langle X \rangle \subseteq {\rm ann}_A(M\langle X \rangle) \subseteq {\rm ann}_A(N) \subseteq {\rm ann}_A (mA) = P\langle X \rangle$. This implies that ${\rm ann}_A(N) = P\langle X \rangle$ for any submodule $N_A$ of $M\langle X \rangle_A$, whence $M\langle X \rangle_A$ is a prime module with associated prime ideal $Q=P\langle X \rangle$.
    
    \item[(2)] Following the same argument in (1) and Lemma  \ref{PBWLemma5.6} (2), $M\langle X \rangle_A$ is a prime module with associated prime ideal $Q=P_{\Delta}\langle X \rangle$.
    
    \item[(3)] In the same way, the argument of (1) and Lemma \ref{PBWLemma5.6} (3) show that $M\langle X \rangle_A$ is a prime module with associated prime ideal $Q=P_{\Sigma,\Delta}\langle X \rangle$ and $P_{\Sigma,\Delta}$ is $(\Sigma,\Sigma^{-1})$-stable.
\end{itemize}
\end{proof}

\begin{corollary}[{\cite[Theorem 5.10]{LeroyMatczuk2004}}] Let $S:= R[x;\sigma,\delta]$ and $M_R$ be a right module. If $M_R$ is prime with $P = {\rm ann}_R(M)$, then:
\begin{itemize}
    \item [\rm (1)] Suppose that $\sigma(P)=P$ and $\delta(P) \subseteq P$. Then the induced module $\widehat{M}_S$ is prime with the associated prime ideal equal to $PS =Q$.
    \item[\rm (2)] Suppose that $\delta$ is a $q$-quantized $\sigma$-derivation and $\sigma(P) = P$. Then $\widehat{M}_S$ is prime with the associated prime ideal equal to $P_{\delta}S = Q$.
    \item[\rm (3)] Suppose that $\delta$ is a $q$-quantized $\sigma$-derivation and the module $\widehat{M}_S$ is good. Then $\widehat{M}_S$ is a prime module if and only if $\sigma(P_{\sigma, \delta}) = P_{\sigma, \delta}$. Moreover, if $\widehat{M}_S$ is prime, then its associated prime ideal is equal to $P_{\sigma,\delta}S = Q$.
\end{itemize}
\end{corollary}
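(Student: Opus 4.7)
The plan is to deduce this corollary as a direct specialization of Theorem \ref{PBWTheorem5.10} to the single-variable case, translating every piece of notation accordingly.

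First, I would record that $S = R[x;\sigma,\delta]$ with $\sigma$ an automorphism is a bijective skew PBW extension over $R$ in a single indeterminate, with $\Sigma = \{\sigma\}$, $\Delta = \{\delta\}$, exponents $\alpha \in \mathbb{N}$, and induced module $M\langle X\rangle_A = M \otimes_R S = \widehat{M}_S$. Under this identification, a two-sided ideal $I$ of $R$ is $(\Sigma,\Sigma^{-1},\Delta)$-stable precisely when $\sigma(I) = I$ and $\delta(I) \subseteq I$, and it is $(\Sigma,\Sigma^{-1})$-stable precisely when $\sigma(I) = I$; similarly $I_{\Sigma} = I_{\sigma}$, $I_{\Delta} = I_{\delta}$ and $I_{\Sigma,\Delta} = I_{\sigma,\delta}$. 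Finally, the extension $S$ is quantized in the sense defined just before Example \ref{ExampleQuantized} if and only if there exists a central invertible $q \in R$ with $\delta\sigma = q\sigma\delta$, $\sigma(q) = q$, and $\delta(q) = 0$, which is exactly the $q$-quantized condition of Goodearl and Letzter used in the corollary.

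Next, I would verify the equality $J\langle X\rangle = JS$ for any two-sided ideal $J$ of $R$ (in the one-variable case, the notation $J\langle X\rangle$ introduced in Lemma \ref{PBWLemma3.1} reduces to the usual left ideal $JS$ of $S$, and it is two-sided exactly under the relevant invariance hypotheses). With these translations in hand, each of the three parts is immediate:
\begin{itemize}
    \item[(1)] Since $\sigma(P) = P$ and $\delta(P) \subseteq P$, the ideal $P$ is $(\Sigma,\Sigma^{-1},\Delta)$-stable. Theorem \ref{PBWTheorem5.10}(1) yields that $\widehat{M}_S$ is prime with associated prime $Q = P\langle X\rangle = PS$.
    \item[(2)] Since $\delta$ is $q$-quantized, $S$ is a quantized skew PBW extension; since $\sigma(P) = P$, the ideal $P$ is $(\Sigma,\Sigma^{-1})$-stable. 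Theorem \ref{PBWTheorem5.10}(2) then gives that $\widehat{M}_S$ is prime with associated prime $Q = P_{\Delta}\langle X\rangle = P_{\delta}S$.
    \item[(3)] Again $S$ is quantized and $\widehat{M}_S$ is good by hypothesis, so Theorem \ref{PBWTheorem5.10}(3) applies: $\widehat{M}_S$ is prime if and only if $P_{\Sigma,\Delta} = P_{\sigma,\delta}$ is $(\Sigma,\Sigma^{-1})$-stable, i.e.\ $\sigma(P_{\sigma,\delta}) = P_{\sigma,\delta}$, and when this holds the associated prime is $Q = P_{\Sigma,\Delta}\langle X\rangle = P_{\sigma,\delta}S$.
\end{itemize}

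I do not expect any substantial obstacle: the corollary is purely a dictionary between the skew PBW extension formalism developed throughout Sections \ref{Goodpolynomials} and \ref{Associatedprimes} and the classical single-variable Ore extension setup of Leroy and Matczuk. The only point worth double-checking carefully is that the invariance conditions ensure $JS$ is genuinely a two-sided ideal of $S$ in each of the three cases (so that the identification $J\langle X\rangle = JS$ is valid as two-sided ideals), which follows from $\sigma(J) = J$ together with $\delta(J) \subseteq J$ — a condition granted in (1) by hypothesis, and in (2) and (3) by the quantized hypothesis combined with $\sigma$-stability, exactly as in the proof of Lemma \ref{PBWLemma5.6}(2). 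Once this is verified, the three parts of the corollary follow verbatim from the corresponding parts of Theorem \ref{PBWTheorem5.10}.
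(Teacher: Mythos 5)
Your proposal is correct and matches the paper's intent exactly: the corollary is stated as an immediate specialization of Theorem \ref{PBWTheorem5.10} to the one-variable bijective extension $S=R[x;\sigma,\delta]$ with $\Sigma=\{\sigma\}$, $\Delta=\{\delta\}$, and your dictionary (stability conditions, quantized versus $q$-quantized, $J\langle X\rangle = JS$) is the right and complete set of verifications. Nothing further is needed.
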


\section{Examples}\label{examplespaper}
 
The importance of our results is appreciated when we apply them to algebraic structures more general than those considered by Leroy and Matczuk \cite{LeroyMatczuk2004}, that is, some noncommutative rings which cannot be expressed as skew polynomial rings. In this section, we consider several families of rings that have been studied in the literature which are subfamilies of skew PBW extensions.

\begin{example}
   \cite[p. 30]{LFGRSV} The {\em diffusion algebra} $A$ is generated by $2n$ indeterminates  $D_i, x_i$ over $\Bbbk$ with $1 \leq i \leq n$ and subjects to the relations
\begin{center}
    $\displaystyle x_ix_j = x_jx_i,\ \ x_iD_j = D_jx_i, \ \ 1 \leq i, j \leq n$, \\
$c_{ij}D_iD_j - c_{ji}D_jD_i = x_jD_i - x_iD_j , \ \  i < j, \ \ c_{ij} , c_{ji} \in \Bbbk^{*}$.
\end{center}
According to Definition \ref{def.skewpbwextensions}, the algebra $A$ can be seen as a skew PBW extension over  $\Bbbk[x_1, \dots , x_n]$, but not as a PBW extension or an iterated skew polynomial ring of injective type. If $M_{R}$ is a right module over $R:=\Bbbk[x_1, \dots , x_n]$ and $M\langle D_1, \ldots,D_n \rangle_A$ is a good module, then Theorem \ref{PBWTheorem4.9} shows that $M_R$ and $M\langle D_1, \ldots,D_n \rangle_A$ have the same uniform dimension. If $M_{R}$ contains enough prime submodules, then Theorem \ref{PBWTheorem5.7} characterizes the associated prime ideals of $M\langle D_1, \ldots,D_n \rangle_A$, and if $M_{R}$ is a prime module with $P={\rm ann}_{R}(M)$, then $M\langle D_1, \ldots,D_n \rangle_A$ is prime module with associated prime ideal $P\langle D_1, \ldots,D_n \rangle$ by Theorem \ref{PBWTheorem5.10}.
\end{example}

\begin{example} \cite[Section 25.2]{BurdikNavratil2009} 
   The basis of the Lie algebra $\mathfrak{g} = \mathfrak{so}(5, \mathbb{C})$ consists of the elements $\bf{J}_{\alpha \beta} = -{\bf J}_{\beta \alpha}$, $\alpha, \beta = 1, 2, 3, 4, 5$ satisfying the commutation relations $[\bf{J}_{\alpha \beta}, \bf{J}_{\mu \nu}] = \delta_{\beta \mu} \bf{J}_{\alpha \nu} + \delta_{\alpha \nu}\bf{J}_{\beta \mu} - \delta_{\beta \nu}\bf{J}_{\alpha \mu} - \delta_{\alpha \mu}\bf{J}_{\beta \nu}$. Having in mind the classical PBW theorem for the universal enveloping algebra $U(\mathfrak{so}(5, \mathbb{C}))$ of $\mathfrak{so}(5, \mathbb{C})$, and since $U(\mathfrak{so}(5, \mathbb{C}))$ is a PBW extension of $\mathbb{C}$ \cite[Section 5]{BellGoodearl1988}, then $U(\mathfrak{so}(5, \mathbb{C}))$ is a skew PBW extension over $\mathbb{C}$, i.e., $U(\mathfrak{so}(5, \mathbb{C})) \cong \sigma(\mathbb{C})\langle {\bf J}_{\alpha \beta}\mid 1 \le \alpha \leq \beta \le 5 \rangle$.  If $M_{\mathbb{C}}$ is a right module over $\mathbb{C}$ and $M\langle {\bf J}_{\alpha \beta} \rangle_A$ is a good module, then Theorem \ref{PBWTheorem4.9} characterizes the uniform dimension of the module $M\langle {\bf J}_{\alpha \beta} \rangle_A$ over $A:= U(\mathfrak{so}(5, \mathbb{C}))$, and if $M_{\mathbb{C}}$ contains enough prime submodules, then Theorem \ref{PBWTheorem5.7} describes the associated prime ideals of $M\langle {\bf J}_{\alpha \beta} \rangle_A$. 
%\end{example}
\end{example}

\begin{example}
         Following Havli\v{c}ek et al. \cite[p. 79]{HavlicekKlimykPosta2000}, the $ \mathbb{C}$-algebra $U_q'(\mathfrak{so}_3)$ is generated by the indeterminates $I_1, I_2$, and $I_3$, subject to the relations given by
    \begin{equation*}
        I_2I_1 - qI_1I_2 = -q^{\frac{1}{2}}I_3,\quad 
        I_3I_1 - q^{-1}I_1I_3 = q^{-\frac{1}{2}}I_2,\quad 
        I_3I_2 - qI_2I_3 = -q^{\frac{1}{2}}I_1,
    \end{equation*}
    
    where $0 \neq q \in \mathbb{C}$. It is straightforward to show that $U_q'(\mathfrak{so}_3)$ cannot be expressed as an iterated skew polynomial ring. However, this algebra can be seen as a skew PBW extension over $\mathbb{C}$ \cite[Example 1.3.3]{LFGRSV}. If $M_{\mathbb{C}}$ is a right module over $\mathbb{C}$ and $M\langle I_1, I_2, I_3 \rangle_A$ is a good module, then Theorem \ref{PBWTheorem4.9} characterizes the uniform dimension of $M\langle I_1, I_2, I_3 \rangle_A$ with $A:=U_q'(\mathfrak{so}_3)$. If $M_{\mathbb{C}}$ contains enough prime submodules, then Theorem \ref{PBWTheorem5.7} describes the associated prime ideals of $M\langle I_1, I_2, I_3 \rangle_A$, and if $M_{\mathbb{C}}$ is a prime module with $P={\rm ann}_{\mathbb{C}}(M)$, then $M\langle I_1, I_2, I_3 \rangle_A$ is a prime module with associated prime ideal $P\langle I_1, I_2, I_3 \rangle$ by Theorem \ref{PBWTheorem5.10}.
\end{example}

\begin{example}
    Zhedanov \cite{Zhedanov} defined the {\em Askey-Wilson algebra} AW(3) as the algebra generated by three indeterminates $K_0, K_1$, and $K_2$, subject to the commutation relations given by
     \begin{align*}
         e^{\omega}K_0K_1 - e^{-\omega}K_1K_0 &= K_2, e^{\omega} K_2K_0 - e^{-\omega}K_0 K_2 = BK_0 + C_1K_1 + D_1, \\
          e^{\omega}K_1K_2 - e^{-\omega}K_2K_1 &= BK_1 + C_0K_0 + D_0, 
     \end{align*}
     
     where $B, C_0, C_1,D_0, D_1 \in \mathbb{R}$ and $\omega$ is an arbitrary real parameter. It is not difficult to see that AW(3) cannot be expressed as an iterated skew polynomial ring. On the other hand, using techniques such as those presented in \cite[Theorem 1.3.1]{LFGRSV}, it can be shown that ${\rm AW(3)}$ is a skew PBW extension of endomorphism type over $\mathbb{R}$, that is,  ${\rm AW(3)} \cong \sigma(\mathbb{R})\langle K_0, K_1, K_2\rangle$. If $M_{\mathbb{R}}$ is a right module over $\mathbb{R}$ and $M\langle K_0, K_1, K_2 \rangle_A$ is a good module over $A:=AW(3)$, then ${\rm udim}(M\langle K_0,K_1,K_2  \rangle_A)={\rm udim}(M_{\mathbb{R}})$ by Theorem \ref{PBWTheorem4.9}. If $M_{\mathbb{R}}$ contains enough prime submodules, then Theorem \ref{PBWTheorem5.7} characterizes the associated prime ideals of $M\langle K_0,K_1,K_2 \rangle_A$, and if $M_{\mathbb{R}}$ is a prime module with $P={\rm ann}_{\mathbb{R}}(M)$, then $M\langle K_0,K_1,K_2 \rangle_A$ is a prime module with associated prime ideal $P\langle K_0,K_1,K_2 \rangle$ by Theorem \ref{PBWTheorem5.10}.
\end{example}

\section{Future work}\label{futurework}

As a possible future work, we have in mind to study the {\em couniform dimension} of modules introduced by Varadarajan \cite{Sarathetal1979, Varadarajan1979}. Annin \cite{Annin2005} studied this dimension on the inverse polynomial module $M[x^{-1}]$ over $R[x;\sigma]$. We think that a natural task is to investigate the counifom dimension of the polynomial module $M[x^{-1}]$ on structures more general than skew polynomial rings of automorphism type.

Macdonald \cite{Macdonald1973} introduced the dual notion of primary decomposition known as {\em secondary representation}, where the main ideals of his theory are called {\it attached primes}. Since Annin \cite{Annin2011} characterized the attached prime ideals on the inverse polynomial module $M[x^{-1}]$ over $R[x;\sigma]$, another natural task is to investigate the attached prime ideals of modules $M[x^{-1}]$ over structures more general than skew polynomial rings of automorphism type (for instance, the skew PBW extensions and the {\em semi-graded rings} introduced by Lezama and Latorre \cite{LezamaLatorre2017}).

%\noindent {\bf \Large{Acknowledgements}}

\end{document}